\xpatchcmd{\@thm}{\thm@headpunct{.}}{\thm@headpunct{}}{}{}
\numberwithin{figure}{section}
\begin{document}

\title{Non-stiff methods for Airy flow and the modified Korteweg de Vries equation.%\thanks{Grants or other notes
%about the article that should go on the front page should be
%placed here. General acknowledgments should be placed at the end of the article.}
}
%\subtitle{Do you have a subtitle?\\ If so, write it here}

%\titlerunning{Short form of title}        % if too long for running head

\author{Mariano Franco-de-Leon         \and
        John Lowengrub %etc.
}

%\authorrunning{Short form of author list} % if too long for running head

\institute{Mariano Franco-de-Leon \at
              University of California, Irvine. 540 H Rowland Hall. Irvine, CA 92697-3875 \\
              Tel.: 949 307 6787\\
%              Fax: +123-45-678910\\
              \email{mfrancod@uci.edu}           %  \\
%             \emph{Present address:} of F. Author  %  if needed
           \and
           John Lowengrub \at
              University of California, Irvine. 540 H Rowland Hall. Irvine, CA 92697-3875\\
              Tel.: 949-751-7557\\
              \email{jlowengr@uci.edu}                                      
}

\date{Received: date / Accepted: date}
% The correct dates will be entered by the editor

\maketitle

\begin{abstract}
 In this paper, we implement non-stiff interface tracking methods for the evolution of 2-D curves that follow Airy flow, a curvature-dependent dispersive geometric evolution law. The curvature of the curve satisfies the modified Korteweg de Vries equation, a dispersive non-linear soliton equation. We present a fully discrete space-time analysis of the equations (proof of convergence) and numerical evidence that confirms the accuracy, convergence, efficiency, and stability of the methods.
\keywords{dispersive\and solitons \and numerical solution \and mKdV \and Airy flow}
\subclass{65Gxx,65Mxx,65Txx,35Q58,42Axx}
\end{abstract}
%****************************************************************************************************

\section{Introduction}
\label{intro}
   Geometric curve flow models are an important class of methods for interface motion; where we understand an interface as a geometrical one-dimensional surface with no thickness.  Under these laws, curves evolve according to local functionals of their geometrical properties. A classical example is the mean curvature flow (\cite{Helfrich}, \cite{HLS2}, \cite{CNFSF}). The governing equations are parabolic partial differential equations. In the materials science context, mean curvature flows are related to the motion of grain boundaries that separate crystallites (grains) with different crystallographic symmetries. Another type of geometric evolution, where the governing equations of evolution are dispersive rather than parabolic, has been garnering increased attention. Dispersive equations arise in a variety of applications (collision-free hydromagnetic waves, ion-acoustic waves in cold plasma, electrostatic fields of graphene, human arm movement, computer vision (\cite{SmKdV}, \cite{appmKdV}, \cite{MiuraKdV}, \cite{affinearm}, \cite{HewlettP})), and their mathematical theories have revealed strong relations with differential geometry, geometrical analysis, soliton theory, and integrable systems (\cite{MiuraKdV}, \cite{RPalaissolitons}, \cite{DGCF}, \cite{IEMPC}, \cite{TTaoAiry}).
   
   In this article, we present the development, implementation, and analysis of schemes to obtain numerical (periodic in space) solutions for the modified Korteweg de Vries (mKdV) equation,
  \begin{equation}\label{mKdV}
  k_t=k_{sss}+\frac{3}{2}k^2k_s.
  \end{equation}  
  
   The mKdV equation is the first non-linear generalization of the KdV soliton model for water waves (1895). The contributions of Zabusky, Gardner, Green, Kruskal, Miura, \cite{lax} displayed their striking properties including: the preservation of form through non-linear interactions, decomposition of waves into smaller solitons, different families of solutions, infinite number of conservation laws, its relations with the Schr\"odinger operator and the eigenvalue problem (\cite{MiuraKdV}, \cite{perKdVper}), the Miura transform to obtain solutions (and well-posedness) of KdV from solutions (and well-posedness) of mKdV (\citet{MiuraKdV}, \citet{gwplwpTaomKdv}, \citet{GGKM}, \citet{somepermKdV}), and the inverse scattering transform (IST). For other type of mKdV solutions (e.g. kinks, breathers), or periodic domains (also well-posed \cite{TTaoAiry}), this approach is not plausible since decay at $\infty$ is a crucial hypothesis for IST. Other analytical techniques to find solutions of (periodic and non-periodic) mKdV-like equations include the use of Jacobi, Weierstrass functions, Hamiltonian structures, B\"acklund-Darboux transforms, the tangent hyperbolic method (\cite{SolitonChuuLian}, \cite{jacobimKdV}, \cite{tanmethod}, \cite{allsolmKdV}, \cite{LaxperiodicKdV}, \cite{abundantmkdv}). Nevertheless, there is a lot of work to develop regarding the orbital stability of these periodic (and non-periodic) waves under perturbations of the underlying solution, soliton resolution conjecture, collisions, multisolitons, compactons, generalizations (gKdV), nonlinear Schrodinger-Airy system, new solutions, and its relations with other equations (\cite{gwplwpTaomKdv}, \cite{Compactons}, \cite{Whysolstab}, \cite{solitonconjecture}, \cite{mKdVnewsol2011}, \cite{nlairyschro}, \cite{nonde}, \cite{Quantumele}, \cite{WellpossKdV}).  
  
  Over the past 30 years, the numerical study of initial value problems of free surface flows has been increasingly important in representing systems of partial differential equations, not just for physical modeling, but also as an empirical tool to analyze theoretical aspects of the underling system. The primary classes of algorithms (Lagrangian and Eulerian), as well as mixed approaches, had been focus on the solution of parabolic (dissipative case) partial differential equations (\cite{SmKdV}, \cite{WNLPDE}, \cite{levelset}, \cite{Hsiao}, \cite{CNAd}, \cite{Engi}, \cite{intc}, \cite{CNFSF}, \cite{CiCP}, \cite{WJ}, \cite{SHongkai}). There are far fewer methods (Adomain decomposition, finite differences, radial basis functions, pseudo-spectral methods) developed for simulating dispersive geometric evolution equations  (\cite{SmKdV}, \cite{WNLPDE}, \cite{CNAd}, \cite{numFEMKdVreal}, \cite{KdVrbf}).

   In this paper, we exploit the theory behind dispersive equations and geometric curve flows by evolving solutions of a closed curve under Airy flow. Then we recover mKdV solutions from the curvature of the curve (See $(\ref{kevt})$), instead of solving mKdV directly, and gaining one degree of smoothness in the numerical implementation. The evolution of any $2$-D closed smooth planar curve $X=(x(\alpha,t),y(\alpha,t))$ with spatial ($2\pi$ periodic) parameter $\alpha$,  time variable $t$, can be described as
$$X_t=V\textbf{n}+T\textbf{s},$$
where $\textbf{s}$, $\textbf{n}$ denote the tangent and outward-normal unit vectors respectively, and $V,T$ are the corresponding normal and tangential velocities. In Airy flow the normal and tangential velocities are $V=-k_s$, $T=\frac{k^2}{2}$: 
 \begin{equation}\label{Airy}
X_t=(-k_s)\textbf{n}+(\frac{k^2}{2})\textbf{s},
\end{equation}
where $k$ is the curvature along the curve, $s$ denotes the arc-length parameter, and subscripts represent partial differentiation. 

   The high number of spatial derivatives, nonlinearity, and dispersive effects represent particular challenges when solving these equations numerically. Explicit time stepping methods undergo severe time constraints. In addition, certain spatial discretizations may lead to numerical instabilities. As observed previously (\cite{Ceniceros}, \cite{FSFSSE}, \cite{JH}, \cite{StabilityI}), even spectral accuracy does not guarantee stability. Further, time-step constraints may be amplified during the evolution due to clustering of points at the interface. The tangential velocity $T=\frac{k^2}{2}$ for Airy flow enforces equal arc-length parametrization at all times provided it is satisfied at the initial step. In this way, $s_{\alpha}$ is everywhere equal to its mean and evolves according to the length $L$ of the curve, a uniform discretization in $\alpha$ is then uniform in $s$ (i.e. $s(\alpha,t)=\frac{\alpha L(t)}{2\pi}$). Numerically, this choice of frame avoids the time-step restrictions for stability due to clustering of grid points at the interface. Another feature for curvature-dependent problems is the relation $k=\theta_s$, between the curvature $k$ and $\theta$, the angle that makes the tangent vector $\textbf{s}=\frac{dX}{ds}$ and the $x$-axis ($\theta=tan^{-1}(\frac{y_{\alpha}}{x_{\alpha}}$)). Using the arc-length parameter and $\theta$, $L$, ($\theta$-$L$ formulation), as dynamical variables  (\cite{repara}) instead of $(x,y)$ coordinates, equation $(\ref{Airy})$ becomes:
 \begin{equation}\label{eqL}
L_t=0,
\end{equation}
\begin{equation}\label{eqT}
\theta_t=(\frac{2\pi}{L})^3[\theta_{\alpha \alpha \alpha}+\frac{\theta_{\alpha}^3}{2}].
\end{equation}

We can then obtain $(x,y)$ by integrating the expression $(x_{\alpha},y_{\alpha})=s_{\alpha}(\sin\theta,\cos\theta)$, \cite{Lo1}, and recover solutions of the mKdV equation from $k=\theta_s$.

  The linear term of the equation $(\ref{eqT})$ displays the reason of stiffness whose stability constraint for an explicit method has the form $\Delta t\leq C\cdot (\overline{ s_{h }} h)^3$ where $\overline{s_h}=min_{\alpha}s_{\alpha}$ and $h$ is the grid spacing in $\alpha$. A stable and accurate discretization must guarantee a perfect balance between nonlinear and dispersive effects.   We use the small-scale decomposition (SSD) of the equations, developed by Lowengrub, and Shelley (HLS) \cite{Lo1} to examine the source of stiffness at small scales at which curvature acts as a linear operator.

          Linear analysis and numerical conservation of first integrals of motion (conservation of mass, momentum, and energy for the problem of the real line (\cite{MiuraKdV}, \cite{KdVConservLaws}, \cite{wwpoub})) for mKdV equation are used to test the accuracy of the numerical methods. Semi-discrete (continuous time) analysis (e.g. \cite{JH}, \cite{Ceniceros}) suggested that numerical filters need to be used to overcome instabilities generated by truncation and aliasing errors arising when computing spatial derivatives (\cite{Krasny}). In contrast, our fully discrete space-time analysis of convergence demonstrates that the use of the filter is not related to convergence, but may enhance stability. 
  
 The paper is organized as follows: In section $\ref{sec:1}$ we describe the numerical schemes used to treat the nonlinear dispersive equation ($\ref{eqT}$) needed to evolve Airy flow. Our most important theoretical (proof of) convergence results are given in section $\ref{sec:2}$.  As a first accuracy test, linear versions of the solution for Airy flow and mKdV are derived in section $\ref{sec:4}$ and compared against the numerical solutions. Additionally, numerical results including accuracy, convergence, stability, dynamics and the use of filters, is covered on section $\ref{sec:5}$. Concluding remarks are given in section $\ref{affinefuture}$,  and technical computations in the appendix $\ref{appe1}$.

%****************************************************************************************************

%****************************************************************************************************
\section{Numerical Methods}
\label{sec:1}

 Next, we introduce the notation to describe the schemes and the convergence analysis (\cite{JH}, \cite{Ceniceros}, \cite{canuto}): arbitrary smooth functions  are expressed by $f,g$, and constants (independents of discretization) are written generically as $C$. For a complex valued function $f$ defined over $[0,2\pi]$, the (continuous) Fourier coefficients of $f$ are:
\begin{equation}\label{contFourier}
\mathcal{F}\widehat{f_m}=\frac{1}{2\pi}\int_0^{2\pi}f(x)e^{-imx}dx,\:\:m=0,\pm 1,\pm 2,... 
\end{equation}

Then, the Fourier series of $f$ is 
\begin{equation}\label{InvcontFourier}
\mathcal{I}f(x)=\sum_{m={-\infty}}^{m=\infty}\widehat{f}_me^{imx}.
\end{equation}

 Denote by $S_h$, $Int_h$ (for $2\pi$ periodic functions of zero mean) the spectral derivative and integral operators used in this problem defined in Fourier space by:
  \begin{equation}
\widehat{S_h f_m}= i m\widehat{f_m},\label{PseudoSpectralderiv}
\end{equation}

 \begin{equation}
\widehat{Int_h f_m}=\begin{cases}
    \frac{ \widehat{f_m} }{im}, & \text{if $m\neq 0$},\\
        0, & \text{if $k=0$.}
  \end{cases}\label{PseudoSpectralInt}
\end{equation}

Observe that the linear part of equation $(\ref{eqT})$ is diagonalizable by the Fourier transform in the following way
 \begin{equation}
 \frac{\partial \mathcal{F} \widehat {\theta_m^t}}{\partial t}+i (\frac{2\pi m}{L})^3\mathcal{F} \widehat{\theta_m^t}=\mathcal{F}\widehat{NL_m^t},\label{intfactor1}
 \end{equation}
  where $L$ is the length of the curve (constant), $m$ is the wavenumber, the super index $t$ represents time and
 \begin{equation}
  NL(\alpha,t)= ((\frac{2\pi}{L})^3  \frac{\theta_{\alpha}(\alpha,t)^3}{2}).   
   \end{equation}   

 Consider a linear propagator method to absorb the leading order (linear term) prior to discretization. Several researchers in different contexts have used linear propagator schemes, e.g. simulations for Navier-Stokes equations, Hele-Shaw flows, reaction-diffusion systems, multicomponent fluids, multiphase materials (\cite{HLS2}, \cite{Lo1}, \cite{G.D.}, \cite{Micro}, \cite{Nie}) to name a few. Consider the integrating factor $r_m^t:=e^{i(2\pi m)^3 tL^{-3}}$ and the function $\Psi(m,t):=r_m^t\mathcal{F}\widehat{\theta_m^t}$, thus equation $(\ref{intfactor1})$  is equivalent to 
  \begin{equation}
 \frac{\partial\left(\Psi(m,t) \right) }{\partial t} =r_m^t\mathcal{F}\widehat{NL_m^t}.\label{intfactor2}
 \end{equation}

  These formulation motivates the use of Discrete Fourier Transform (DFT). In parallel with the continuous case $(\ref{contFourier})$,$(\ref{InvcontFourier})$, given a periodic  function $f$,  whose values are known on a uniform grid of mesh size $h=\frac{2\pi}{N}$($N=2^p$ is a power of two), the $m$-th  discrete Fourier coefficients of $f$ are defined as
 \begin{equation}
\widehat{f_m^t}=\frac{1}{N}\sum_{k=-N/2+1}^{N/2}f(\alpha_k,t)e^{-im\alpha_k};\alpha_k= kh,\label{Fouriertransform}
\end{equation}
with inverse Fourier formula given by
 \begin{equation}
f_k^t=\sum_{m=-N/2+1}^{N/2}\widehat{f_m^t}e^{im\alpha_k}.\label{InverseFourier}
\end{equation}

  Non-linear terms are treated in physical space and to avoid convolutions. In other words:
   \begin{equation}
  \widehat{NL_m^t}=\frac{1}{N}\sum_{k=-N/2+1}^{N/2}(S_h\theta)^{(3)}(\alpha_k,t)e^{-im\alpha_k},
 \end{equation}
 Implicit time integration methods can now be easily applied.

%*****************
  \subsection{Linear propagator method and Adams-Bashforth (ADB)} Based on $(\ref{intfactor2})$, the first step is computed using an Euler implementation and the integrating factor method: 
 \begin{equation}\label{EulerADB}
 \widehat{\theta_m^1}=\zeta_m (\widehat{  \theta_m^0}+\Delta t \widehat{NL_m^0}),
 \end{equation}
 where $\Delta t$ denotes the time step discretization and 
 \begin{equation}
 \zeta_m:=exp(-i\Delta t(m\frac{2\pi}{L})^3). \label{discnl}
\end{equation}

 Subsequent steps are calculated with the second order Adams-Bashforth (ADB) method:
  \begin{equation}\label{discth}
  \widehat{\theta_m^{j+1}}=\zeta_m \widehat{\theta_m^j}+\frac{\Delta t}{2}\left[ 3\zeta_m\widehat{NL_m^j}-(\zeta_m)^2\widehat{NL_m^{j-1}}\right].
  \end{equation}
   Notice how $\widehat{\theta}$ at the $j$th time-step is propagated forward to the next step $(j+1)$ at the exact exponential rate associated with the linear term. If $NL=0$, this yields to the exact solution of the linear problem. In the case of Airy Flow, the length of the curve is constant, thus $\zeta_m$ is constant over time.
%*****************
  \subsection{Crank-Nicholson (CN): }  It is also possible to discretize $(\ref{eqT})$ using an Euler discretization for the first step
  \begin{equation}\label{EulerCN}
\widehat{{\theta^{1}_m}}=\widehat{{\theta^0_m}}+\Delta t (\widehat{{L^0_m}}+\widehat{{NL^0_m}}),
  \end{equation}
and a Crank-Nicholson-like (CN) method for later steps of the form: 
\begin{equation}\label{FirstCN}
\widehat{\theta_m^{j+1}}- \widehat{\theta_m^{j-1}}=\Delta t (\widehat{Lin^{j+1}_m}+\widehat{Lin^{j-1}_m})+2\Delta t\widehat{NL_m^j},
\end{equation}
where $\widehat{Lin^{j}_m}=S_hS_hS_h\widehat{\theta}_m^j$. 

Defining $\gamma_m=\Delta t(\frac{2\pi m}{L})^3$, and
\begin{equation}
\zeta_m^1:=\frac{1-\gamma_m^2}{1+\gamma_m^2}+i\frac{-2\gamma_m}{1+\gamma_m^2}=\frac{1-i\gamma_m}{1+i\gamma_m},\:\: \zeta_m^2:=\frac{1-i\gamma_m}{1+\gamma_m^2},
\end{equation}
then, $(\ref{FirstCN})$ is equivalent to 
\begin{equation}\label{thetaeqCN}
\widehat{\theta_m^{j+1}}=\zeta_m^1\widehat{\theta_m^{j-1}}+2\Delta t\zeta_m^2\widehat{ NL_m^j},
\end{equation}
for each wave number $m$. 
%*****************
  \subsection{Crank-Nicholson and Adams-Bashforth (CNADB)} 
 The scheme CNADB is a modification of CN, where the first step after initialization is the average of the schemes used for CN and ADB discretizations, that is
\begin{equation}\label{EulerCNADB}
 \widehat{ \theta_m^1}= \widehat{ \theta_m^0}\frac{1}{2}[e^{-i\gamma_m}+(1-i\gamma_m)]+\widehat{NL_m^0}\frac{\Delta t}{2}[1+e^{-i\gamma_m}].
\end{equation}

 %****************************************************************************************************
   %****************************************************************************************************
\section{Analytical convergence }
\label{sec:2}
To prove the convergence of the presented schemes, we denote to the exact continuous solution evaluated at the grid points by $L,\theta_m^j=\theta(\alpha_m,t_j)$ , and we use $ \widetilde{L}, \widetilde{\theta^n_m}$ for  the discrete approximations. Purely imaginary terms are denoted by $I_j$. For simplicity, we omit the time notation where the specific time is not relevant for the computation.

 We work with the following space of functions:
\begin{equation}
\begin{split}
C^r[0,2\pi]:=& \{f: \text{first  $r$ derivatives exist over $(0,2\pi)$,  are of bounded variation }\\
 &  \text{ over $[0,2\pi]$, and whose first $r-1$ derivatives are $2\pi$-periodic.}\} 
\end{split}
\end{equation}
The existence of the first $r$ derivatives is understood in the almost everywhere Riemann-Stieltjes sense (\cite{canuto}).

 The main tool to handle truncation error is the spectral accuracy of the method. In other words, the Fourier coefficients of any $f\in C^{r}[0,2\pi]$ satisfies the decay condition  (\cite{canuto})
 \begin{equation}\label{specdecay}
\widehat{f_m}=O(\frac{1}{m^{r+1}}),
\end{equation}
which implies (\cite{Eitan}) 
\begin{equation}
|S_hf(\alpha_i)-f_{\alpha}(\alpha_i)|=O(h^{r-1}).
\end{equation}
Also, the accuracy of the trapezoidal rule can be estimated (\cite{Trapezoid}) by
\begin{equation}
|\sum_{j=-N/2+1}^{N/2}|f(\alpha_j)|h-\int_{-\pi}^{\pi}f(\alpha)d\alpha|=O(\frac{1}{N^{r+1}}).
\end{equation}

Approximations are computed with the discrete inner products
\begin{equation}\label{innerproduct}
\langle f,g\rangle_h:=\sum_{m=-N/2+1}^{N/2}hf_m\overline{g_m},\text{ } \langle\hat{f},\hat{g}\rangle :=\sum_{m=-N/2+1}^{N/2} \hat{f}_m\overline{\hat{g}_m},
\end{equation}
 and the associated norms
 \begin{equation}
||f||_{l^2}^2=\sum_{m=-N/2+1}^{N/2}|f_m|^2h\text{, } ||\hat{f}||^2=\sum_{m=-N/2+1}^{N/2}|\hat{f}_m|^2. \label{l2norm}
\end{equation}

An immediate consequence of the trapezoidal rule accuracy is 
\begin{equation}\label{Lerrors}
||f||_{l^2}^2-||f||_{L^2}^2=O(h^{r+1}).
\end{equation}

The key ideas for treating stability error besides algebraic manipulation is Plancherel theorem 
\begin{equation} \label{plancherel} 
\frac{1}{2\pi}\langle f,g \rangle_h=\langle \hat f,\hat g \rangle\Rightarrow ||\widehat{f}||= \frac{1}{\sqrt{2\pi}}||f||_{l^2},
\end{equation}
that allow us to compute inner products at Fourier or Physical space interchangeably.

 The main theoretical results in this section are the following theorems:
\begin{theorem}{}\label{disctheoADB}
Assume that for $0\leq t\leq T$ there exists a regular solution of the $\theta-L$ system of evolution equations $(\ref{eqL})$ and $(\ref{eqT})$ (for \textbf{Airy flow and the mKdV} equation) with $\theta(\cdot,t)$  belonging to $C^{r+3}[0,2\pi]$ for $4\leq r$ and whose second derivative is continuous with respect to time. If $\widetilde{\theta_m^j}$ denotes the numerical solution obtained with the scheme $(\ref{EulerADB})$,$(\ref{discnl})$,$(\ref{discth})$  then for $h\leq h_0(T, k)$ and $\frac{\Delta t}{h}\leq C_0(T,k)$\label{CstabCN} we have,
 \begin{equation}
||\widetilde{\theta^j}-\theta(\cdot,t_j)||_{l^2}\leq C(h^r+\Delta t^2). \label{ThetaConvergence}
\end{equation}
\end{theorem}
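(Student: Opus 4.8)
The plan is to run a standard consistency-plus-stability argument for the two-step scheme, working with the discrete Fourier coefficients of the error $\widehat{E_m^j}:=\widetilde{\theta_m^j}-\widehat{\theta_m^j}$, where $\widehat{\theta_m^j}$ are the discrete coefficients of the exact solution sampled on the grid. First I would substitute the exact solution into the update formulas $(\ref{EulerADB})$ and $(\ref{discth})$ to produce a local truncation error $\widehat{T_m^{j+1}}$, and then subtract the two relations to obtain the error recursion
\begin{equation}
\widehat{E_m^{j+1}}=\zeta_m\widehat{E_m^j}+\frac{\Delta t}{2}\left[3\zeta_m\widehat{G_m^j}-\zeta_m^2\widehat{G_m^{j-1}}\right]-\widehat{T_m^{j+1}},\nonumber
\end{equation}
where $\widehat{G_m^j}$ is the Fourier coefficient of the nonlinear difference $NL(\widetilde\theta)-NL(\theta)$ at time $t_j$. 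The two ingredients are then (i) a bound on $\widehat{T}$ and (ii) control of the propagation of $\widehat{E}$ through this recursion.

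For the consistency estimate I would split $\widehat{T}$ into a temporal and a spatial part. The temporal part is the usual Adams--Bashforth remainder: a Taylor expansion in $t$, using that the second time derivative of $\theta$ is continuous, gives a per-step error of size $O(\Delta t^3)$, while the single Euler initialization $(\ref{EulerADB})$ contributes $O(\Delta t^2)$ once, consistent with the claimed $O(\Delta t^2)$ global order. The spatial part is the gap between the true derivatives and their spectral surrogates: since $\theta(\cdot,t)\in C^{r+3}[0,2\pi]$, the decay estimate $(\ref{specdecay})$ and its consequences for spectral differentiation, applied with the available $r+3$ derivatives, together with the trapezoidal accuracy used to pass between continuous and discrete Fourier coefficients, bound the replacement of $\theta_\alpha^3$ by $(S_h\theta)^3$ (and of the continuous transform by the DFT) by $O(h^r)$, with room to spare. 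Summing the $O(\Delta t^3+\Delta t\,h^r)$ per-step contributions over the $O(T/\Delta t)$ steps yields the target $O(h^r+\Delta t^2)$.

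The stability step is where the integrating factor pays off. Because $\zeta_m=\exp(-i\Delta t(2\pi m/L)^3)$ satisfies $|\zeta_m|=|\zeta_m^2|=1$, the linear propagator is an isometry on the Fourier side, so the stiff dispersive term produces no amplification and only the explicit nonlinear increment can grow the error. I would estimate $\widehat{G}$ by first factoring, pointwise on the grid,
\begin{equation}
(S_h\widetilde\theta)^3-(S_h\theta)^3=S_hE\cdot\big[(S_h\widetilde\theta)^2+(S_h\widetilde\theta)(S_h\theta)+(S_h\theta)^2\big],\nonumber
\end{equation}
so that the nonlinear difference is the error's spatial derivative times a quadratic factor $Q$. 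A bootstrap (induction) hypothesis $\|S_h\widetilde\theta\|_\infty\le M$, carried alongside the error bound and justified because the claimed error is small, keeps $\|Q\|_\infty$ bounded. The genuine danger is the loss of one derivative in $S_hE$, and this is precisely the obstacle the hypothesis $\Delta t/h\le C_0$ is meant to absorb. Rather than crudely invoking the inverse inequality $\|S_hE\|_{l^2}\le Ch^{-1}\|E\|_{l^2}$ (which on its own does not close the estimate), I would form the discrete energy by pairing the recursion with $\widehat{E^{j+1}}$ and $\widehat{E^j}$ and use Plancherel $(\ref{plancherel})$ to move the dangerous term to physical space, where the skew-adjointness of $S_h$ and a discrete summation by parts transfer the derivative off $E$ onto the smooth factor $Q$; the leading contribution becomes $\tfrac{\Delta t}{2}\langle(S_hQ)E,E\rangle$, of size $O(\Delta t)\|E\|^2$ with no inverse power of $h$, while the residual aliasing and quadratic-in-$\Delta t$ terms are the ones kept in check by $\Delta t/h\le C_0$.

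Finally I would assemble the pieces into a discrete Gronwall inequality for the natural two-step energy $\mathcal E^j:=\|\widehat{E^{j}}\|^2+\|\widehat{E^{j-1}}\|^2$, of the form $\mathcal E^{j+1}\le(1+C\Delta t)\mathcal E^j+C\Delta t\,(h^r+\Delta t^2)^2$, and iterate to get $\mathcal E^j\le e^{CT}\mathcal E^0$ with $\mathcal E^0$ controlled by the accuracy of the initialization. The main obstacle, as indicated, is the derivative-losing nonlinearity $\theta_\alpha^3$: making the summation-by-parts cancellation rigorous on the grid (the spectral operator $S_h$ does not obey the Leibniz rule exactly, leaving aliasing remainders) while simultaneously closing the bootstrap bound on $\|S_h\widetilde\theta\|_\infty$ is the delicate point, and it is exactly here that the smoothness $\theta\in C^{r+3}$ and the mesh restriction $\Delta t/h\le C_0$ are consumed.
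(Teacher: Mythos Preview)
Your setup is sound---the error recursion, the $|\zeta_m|=1$ isometry, the cubic factorization of the nonlinear difference, the bootstrap/induction hypothesis, and the identification of the derivative loss in $S_hE$ as the only real enemy all match the paper. The consistency analysis also lines up, with one refinement: the paper computes $\Psi_{tt}$ and $\Psi_{ttt}$ explicitly and observes that the integrating-factor time derivatives convert into spatial derivatives of $\theta$ of order up to $7$, which is where the hypothesis $\theta\in C^{r+3}$ with $r\ge4$ is actually consumed.

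Where you diverge from the paper is the stability mechanism. You propose a Gronwall inequality for $\mathcal E^j=\|\widehat{E^j}\|^2+\|\widehat{E^{j-1}}\|^2$ and plan to tame the derivative loss by summation by parts, using the skew-adjointness of $S_h$ to move the derivative onto the smooth factor $Q$. That is the semi-discrete strategy of the earlier references the paper cites, and it is exactly the route on which aliasing remainders from the failure of the discrete Leibniz rule become the obstruction (and where, historically, filtering enters). The paper's fully discrete argument takes a different path that avoids integration by parts entirely: it forms the two-level inner product $\langle \widehat{\dot\theta^{j+1}}-\zeta\widehat{\dot\theta^{j-1}},\,\widehat{\dot\theta^{j+1}}+\zeta\widehat{\dot\theta^{j-1}}\rangle$, substitutes the error recursion itself for the nonlinear increments $\tfrac{\Delta t}{2}\widehat{\dot\mu^j}$, and thereby converts the dangerous cross terms $\langle \widehat{\dot\theta},\Delta t\,\widehat{\dot\mu}\rangle$ into telescopic sums in $j$. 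After telescoping, what remains are only boundary terms at $j=n$ and $j=1,2$, and \emph{on those} the paper does invoke the crude inverse inequality, packaged as the lemma $\Delta t\|\dot{NL}^j\|_{l^2}\le C\tfrac{\Delta t}{h}\|\dot\theta^j\|_{l^2}+O(h^r+\Delta t^2)$; since this bound is applied only finitely many times rather than summed over $O(T/\Delta t)$ steps, the CFL condition $\Delta t/h\le C_0$ closes the estimate without any summation-by-parts cancellation and without a filter. So the inverse inequality you set aside is in fact the tool the paper uses---the point is that the telescopic identity, not Gronwall, is what prevents the $h^{-1}$ factors from accumulating.
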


\begin{theorem}{}\label{disctheoCN}
Assume that for $0\leq t\leq T$ there exists a regular solution of the $\theta-L$ system of evolution equations $(\ref{eqL})$ and $(\ref{eqT})$ (for \textbf{Airy flow and the mKdV} equation) with $\theta(\cdot,t)$  belonging to $C^{r+3}[0,2\pi]$ for $6\leq r$ and whose third derivative is continuous with respect to time. If $\widetilde{\theta_m^j}$ denotes the numerical solution obtained with the scheme $(\ref{EulerCN})$,$(\ref{thetaeqCN})$ then for $h\leq h_0(T, k)$ and $\frac{\Delta t}{h}\leq C_0(T,k)$ we have,
 \begin{equation}
||\widetilde{\theta^j}-\theta(\cdot,t_j)||_{l^2}\leq C(T)(h^r+\Delta t^2). \label{ThetaConvCN}
\end{equation}
\end{theorem}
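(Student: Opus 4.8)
The plan is to follow the classical consistency--stability--Gronwall route, working entirely on the Fourier side and transferring the final bound back to $l^2$ through Plancherel's identity (\ref{plancherel}). Writing $e_m^j=\widetilde{\theta_m^j}-\widehat{\theta}_m(t_j)$ for the error in the $m$-th coefficient, I would first insert the exact solution into the one-step map (\ref{thetaeqCN}) to define a truncation residual $T_m^j$, then subtract to obtain the error recursion $e_m^{j+1}=\zeta_m^1 e_m^{j-1}+2\Delta t\,\zeta_m^2\big(\widehat{NL_m^j}[\widetilde\theta]-\widehat{NL_m^j}[\theta]\big)-T_m^j$. The first (Euler) step (\ref{EulerCN}) would be treated separately: it is only first order locally, but a single $O(\Delta t^2)$ error carried through a stable recursion remains $O(\Delta t^2)$, so it does not degrade the global order.

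For consistency I would rewrite (\ref{FirstCN}) in the equivalent midpoint form $\frac{\theta_m^{j+1}-\theta_m^{j-1}}{2\Delta t}+\tfrac{i}{2}(\tfrac{2\pi m}{L})^3(\theta_m^{j+1}+\theta_m^{j-1})=\widehat{NL_m^j}$, which is a centred, hence second order, discretization of the diagonalized equation (\ref{intfactor1}). Expanding about $t_j$, the leading residual is $O(\Delta t^2\,\partial_t^3\widehat\theta_m)$ from the central difference, together with $O\!\big(m^3\Delta t^2\,\partial_t^2\widehat\theta_m\big)$ from the implicit averaging of the dispersive term. The troublesome $m^3$ factor is precisely what forces the strong hypotheses: differentiating (\ref{eqT}) in time trades each time derivative for three spatial derivatives, so $\partial_t^2\theta$ loses six derivatives relative to $\theta$; the assumption $\theta(\cdot,t)\in C^{r+3}$ with $r\ge 6$ makes $\sum_m m^6|\partial_t^2\widehat\theta_m|^2$ finite via the spectral decay (\ref{specdecay}), and continuity of the third time derivative bounds the $\partial_t^3$ term. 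The remaining discrepancies between $S_h$, the discrete nonlinearity, and their continuous counterparts are controlled at the spectral rate dictated by (\ref{specdecay}) and the trapezoidal estimates, so I expect $\|T^j\|=O(\Delta t^3+\Delta t\,h^r)$, which sums over the $O(T/\Delta t)$ steps to $O(\Delta t^2+h^r)$.

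For stability I would exploit the two structural facts $|\zeta_m^1|=1$ and $|\zeta_m^2|=(1+\gamma_m^2)^{-1/2}\le 1$. Taking the squared Fourier norm of the error recursion and using $|\zeta_m^1 e_m^{j-1}|=|e_m^{j-1}|$ gives a leapfrog-type identity relating $\|e^{j+1}\|^2$ to $\|e^{j-1}\|^2$; summing it and grouping the even and odd levels into $E^n=\|e^{n+1}\|^2+\|e^n\|^2$ produces a single monotone energy and controls the coupling between the two time parities. The nonlinear difference factors as $\widehat{NL^j}[\widetilde\theta]-\widehat{NL^j}[\theta]=\tfrac12(\tfrac{2\pi}{L})^3\big((S_h e^j)\,P^j\big)^{\wedge}$ with $P^j=(S_h\widetilde\theta^j)^2+S_h\widetilde\theta^j\,S_h\theta^j+(S_h\theta^j)^2$, so a bootstrap hypothesis keeping $\widetilde\theta^j$ and its first derivative bounded (hence $\|P^j\|_\infty\le C$) is needed here, to be closed a posteriori from the smallness of the accumulated error.

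The main obstacle is the control of the explicit term $2\Delta t\,\zeta_m^2\,(S_h e^j\,P^j)^{\wedge}$: cubing the first derivative places one derivative on the error, and a naive inverse inequality yields only an $O(\Delta t/h)=O(C_0)$ bound, too weak to close Gronwall. The resolution is the small-scale decomposition of \cite{Lo1}: because $\gamma_m\sim\Delta t\,m^3$, the implicit factor $\zeta_m^2$ is a genuine smoothing operator whose symbol decays like $m^{-3}$ at high wavenumbers and therefore absorbs the single derivative from the nonlinearity, so the composite multiplier $2\Delta t\,\zeta_m^2(im)$ stays bounded and, under the restriction $\Delta t/h\le C_0$, produces a per-step amplification of the form $1+O(\Delta t)$ rather than $1+O(1)$. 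Quantifying this balance, i.e.\ showing $2\,\mathrm{Re}\,\langle \zeta^1 e^{j-1},\,2\Delta t\,\zeta^2(S_h e^j P^j)^{\wedge}\rangle\le C\Delta t\,(E^{j}+E^{j-1})$, is the delicate point where the hypotheses on $r$ and the CFL constant are genuinely used. With this in hand the energy identity reads $E^{n}\le E^{0}+C\Delta t\sum_{j\le n}E^{j}+\sum_{j\le n}\|T^j\|^2$, and discrete Gronwall together with the starting-step estimate gives $E^n\le C(T)(h^r+\Delta t^2)^2$. Plancherel's identity (\ref{plancherel}) then converts this into $\|\widetilde\theta^j-\theta(\cdot,t_j)\|_{l^2}\le C(T)(h^r+\Delta t^2)$, and the bootstrap assumption on $P^j$ is verified from the same smallness.
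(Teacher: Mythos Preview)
Your consistency analysis and bootstrap framework are sound and close in spirit to the paper, but the stability step contains a genuine gap.

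You propose to close the estimate by a Gronwall inequality, and for that you need the nonlinear cross term to be $O(\Delta t)\,E^j$ per step. Your mechanism for producing this $\Delta t$ is the claim that the implicit factor $\zeta_m^2$ smooths like $m^{-3}$ and hence ``absorbs'' the derivative in $S_he^j$, so that the composite multiplier $2\Delta t\,\zeta_m^2(im)$ yields $1+O(\Delta t)$. This does not work for two reasons. First, $\zeta_m^2$ acts on the Fourier coefficients of the \emph{product} $(S_he^j)P^j$, not on $S_he^j$ itself; the $im$ multiplier is inside a physical-space multiplication before $\zeta_m^2$ ever sees it, so you cannot simply compose the two symbols without a commutator argument you do not provide. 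Second, even if the multiplier analysis were legitimate, $|2\Delta t\,\zeta_m^2(im)|=2\Delta t\,|m|/\sqrt{1+\gamma_m^2}$ is maximized near $\gamma_m\sim 1$, i.e.\ $m\sim\Delta t^{-1/3}$, where it is $O(\Delta t^{2/3})$, not $O(\Delta t)$; summing over $T/\Delta t$ steps still diverges. In short, with only $\Delta t/h\le C_0$ and one derivative lost to the nonlinearity, the per-step amplification you can honestly extract is $1+O(1)$, and Gronwall fails.

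The paper does not use Gronwall at all. Instead it performs a \emph{double substitution}: in the energy identity $\langle \widehat{\dot\theta^{j+1}}-\widehat{\dot\theta^{j-1}},\widehat{\dot\theta^{j+1}}+\widehat{\dot\theta^{j-1}}\rangle$ it inserts the error recursion not only for $\widehat{\dot\theta^{j+1}}$ but also for $\widehat{\dot\theta^{j-1}}$ (in terms of $\widehat{\dot\theta^{j-3}}$ and $\widehat{\dot{NL}^{j-2}}$). This makes the dangerous nonlinear cross terms themselves \emph{telescope} in $j$, leaving only boundary contributions at times near $0$ and near $n$. Those boundary terms are then controlled directly by the key lemma $\Delta t\,\|\dot{NL}^j\|_{l^2}\le C\,\tfrac{\Delta t}{h}\|\dot\theta^j\|_{l^2}+O(h^{r+2})=O(h^r+\Delta t^2)$, which uses the inverse inequality, the CFL bound, \emph{and} the bootstrap hypothesis $\|\dot\theta^j\|_{l^2}=O(h^r+\Delta t^2)$ simultaneously. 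The induction therefore closes with a constant independent of $n$, with no exponential-in-time factor and no reliance on smoothing by $\zeta_m^2$ (only $|\zeta_m^1|=1$ and $|\zeta_m^2|\le 1$ are used). The missing idea in your proposal is precisely this telescoping of the nonlinear cross terms; once you have it, the argument is a direct induction rather than a Gronwall accumulation.
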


\begin{corollary}{}\label{cnadblemma} Assume that for $0\leq t\leq T$ there exists a regular solution of the $\theta-L$ system of evolution equations $(\ref{eqL})$ and $(\ref{eqT})$ (for \textbf{Airy flow and the mKdV} equation) with $\theta(\cdot,t)$  belonging to $C^{r+3}[0,2\pi]$ for $6\leq r$ and whose third derivative is continuous with respect to time. If $\widetilde{\theta_m^j}$ denotes the numerical solution obtained with the scheme $(\ref{thetaeqCN})$,$(\ref{EulerCNADB})$,  then for $h\leq h_0(T, k)$ and $\frac{\Delta t}{h}\leq C_0(T,k)$ we have,
 \begin{equation}
||\widetilde{\theta^j}-\theta(\cdot,t_j)||_{l^2}\leq C(T)(h^r+\Delta t^2). \label{ThetaConvCNADB}
\end{equation}
\end{corollary}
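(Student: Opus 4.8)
The plan is to treat this statement as a genuine corollary of Theorem \ref{disctheoCN}. The scheme (\ref{thetaeqCN}),(\ref{EulerCNADB}) uses the \emph{same} two-step recursion (\ref{thetaeqCN}) as the CN method for every step $j\ge 2$, and differs from CN only in the startup step (\ref{EulerCNADB}). Consequently the entire stability apparatus built in the proof of Theorem \ref{disctheoCN} — the energy identity extracted from (\ref{thetaeqCN}) via Plancherel's theorem (\ref{plancherel}), the bounds on the nonlinear increments $\widehat{NL_m^j}$ coming from the smoothness of $\theta$ together with the spectral decay (\ref{specdecay}), and the discrete Gronwall argument closed under the restrictions $h\le h_0$ and $\Delta t/h\le C_0$ — carries over verbatim. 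All that must be re-examined is the startup error $\|\widetilde{\theta}^{1}-\theta(\cdot,t_1)\|_{l^2}$; once it is shown to be $O(h^r+\Delta t^2)$, the same propagation estimate yields (\ref{ThetaConvCNADB}).

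The startup estimate is almost free, because by construction the CNADB initialization (\ref{EulerCNADB}) is the arithmetic mean of the ADB initialization (\ref{EulerADB}),(\ref{discnl}) and the CN initialization (\ref{EulerCN}), both applied to the same data $\widehat{\theta_m^0}$, $\widehat{NL_m^0}$: its linear coefficient $\tfrac12[e^{-i\gamma_m}+(1-i\gamma_m)]$ and nonlinear coefficient $\tfrac{\Delta t}{2}[1+e^{-i\gamma_m}]$ are exactly the averages of the ADB coefficients $(e^{-i\gamma_m},\,\Delta t\,e^{-i\gamma_m})$ and the CN coefficients $(1-i\gamma_m,\,\Delta t)$. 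Hence $\widetilde{\theta}^{1}_{\mathrm{CNADB}}=\tfrac12(\widetilde{\theta}^{1}_{\mathrm{ADB}}+\widetilde{\theta}^{1}_{\mathrm{CN}})$, and subtracting the exact value and using the triangle inequality gives
\begin{equation}
\|\widetilde{\theta}^{1}_{\mathrm{CNADB}}-\theta(\cdot,t_1)\|_{l^2}\le \tfrac12\|\widetilde{\theta}^{1}_{\mathrm{ADB}}-\theta(\cdot,t_1)\|_{l^2}+\tfrac12\|\widetilde{\theta}^{1}_{\mathrm{CN}}-\theta(\cdot,t_1)\|_{l^2}.
\end{equation}
Each summand is the startup error of a scheme already analyzed, and in the proofs of Theorems \ref{disctheoADB} and \ref{disctheoCN} each was shown to be $O(h^r+\Delta t^2)$; therefore so is the left-hand side. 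To make this independent of the internal details of those proofs, I would alternatively verify the bound directly from the Duhamel form of (\ref{intfactor2}), namely $\widehat{\theta_m^{1}}=\zeta_m\widehat{\theta_m^0}+\zeta_m\int_0^{\Delta t}r_m^{\tau}\widehat{NL_m}(\tau)\,d\tau$: freezing $NL$ at $t_0$ perturbs the integral by $O(\Delta t^2)$ (the time-regularity hypothesis controls $\partial_t NL$), while the linear symbol differs from the exact one by $\tfrac12|(1-i\gamma_m)-e^{-i\gamma_m}|=O(\gamma_m^2)$ on the resolved modes.

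The only genuinely delicate point — and the one I expect to be the main obstacle — is the behaviour of the high wavenumbers. Since $\gamma_m=\Delta t(2\pi m/L)^3$ and the CFL-type condition $\Delta t/h\le C_0$ does \emph{not} force $\gamma_m$ to be small near the Nyquist mode, the CN-type factor $(1-i\gamma_m)$ there departs from the unit-modulus propagator $e^{-i\gamma_m}$ and can amplify a mode by a factor $\sim\gamma_m$. What rescues the estimate is that such modes lie only in the tail $m\gtrsim\Delta t^{-1/3}$, so when the coefficient errors are weighted against the spectral decay $\widehat{\theta_m^0},\widehat{NL_m^0}=O(m^{-(r+4)})$ guaranteed by $\theta(\cdot,t)\in C^{r+3}[0,2\pi]$, the $l^2$ sum is dominated by the low-frequency $O(\gamma_m^2)$ contribution and equals $O(\Delta t^2)$, the high-frequency tail being of strictly higher order precisely because $r\ge6$; averaging against the unitary ADB symbol $e^{-i\gamma_m}$ can only help. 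This is exactly the computation already performed for the CN startup (\ref{EulerCN}) inside Theorem \ref{disctheoCN}, so it transfers directly. With the startup error secured at order $O(h^r+\Delta t^2)$ and the recursion identical to CN thereafter, no new stability analysis is required, which is why (\ref{ThetaConvCNADB}) follows as a corollary rather than as an independent theorem.
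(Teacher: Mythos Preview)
Your proposal is correct and follows essentially the same approach as the paper: you recognize that the CNADB startup (\ref{EulerCNADB}) is the arithmetic mean of the ADB startup (\ref{EulerADB}) and the CN startup (\ref{EulerCN}), bound the resulting first-step error by the triangle inequality using the startup estimates (\ref{Eupbo}) and (\ref{vt2a}) already obtained in Theorems \ref{disctheoADB} and \ref{disctheoCN}, and then let the unchanged recursion (\ref{thetaeqCN}) carry the CN convergence proof through. The paper's proof says exactly this in two sentences. Your additional discussion of the Duhamel form and the high-wavenumber tail is unnecessary here: because the initial data are exact ($\dot{\theta}^0=0$), the potentially large factor $(1-i\gamma_m)$ never multiplies any error, and the first-step bound is a pure $O(\Delta t^2)$ local truncation estimate with no high-mode amplification to control.
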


At this point, we introduce the notation that will be used in the error analysis. We define the discrete $n$-th order smoothing operator, written generically $A_{-n}$, as an operator satisfying,
$$||S_h^l(A_{-l}(\dot{\theta^j}))||_{l^2}\leq C||\dot{\theta^j}||_{l^2},\:\:and\:\: ||A_{-n}(S_h^l\dot{\theta^j})||_{l^2}\leq C||\dot{\theta^j}||_{l^2},\:for\:0\leq l\leq n.$$

In particular $||A_0(\dot{\theta^j})||_{l^2}\leq C||\dot{\theta^j}||_{l^2}$ and $h^nA_{0}(\dot{\theta^j})=A_{-n}(\dot{\theta^j})$. For estimates in time we write $A_0(\Delta t^n)$, for an operator satisfying
$$||A_0(\Delta t^n)||_{l^2}\leq \Delta t^n||f||_{l^2},$$
where $f$ is $l^2$ integrable.

The proofs of  theorems (\ref{disctheoADB}) and (\ref{disctheoCN}) are similar. We focus on CN discretization and refer the reader to the appendix $(\ref{discreteproofADB})$ for the ADB case.
\begin{proof}[Theorem \ref{disctheoCN}]
The error between numerical and exact solution (at a given time $j$) is denoted by:
\begin{equation}
\dot{\theta}_m^j:=\widetilde{\theta_m^j}-\theta(\alpha_m,t_j).\label{ThetaerrorVarCN}
\end{equation}

 Defining the auxiliary time 
\begin{equation}\label{timeHyCN}
T^*=Sup\{t|t\leq T,|\dot{L}|<h^{r+3},||\dot{\theta}^j||_{l^2}=O(h^r+\Delta t^2)\},\\
\end{equation}
   for $j=0,1,...,n$. We aim to prove that the error of theta at  the $n+1$ step also satisfies the estimate $||\dot{\theta}^{n+1}||_{l^2}=O(h^r+\Delta t^2)$ and this will imply $T^*=T$ by induction.

\paragraph*{Taylor approximations: \:}  for the first step of the induction argument, we calculate upper bounds for Euler step using the Taylor expansion:
\begin{equation}\label{Estep}
\theta^{1}_m=\theta^0_m+\Delta t (L^0_m+NL^0_m)+\frac{\Delta t^2}{2}(\theta_{tt})^0_m+O(\Delta t^3).
\end{equation}

Similarly, after the second step. The Crank-Nicholson discretization derived from the Taylor expansion of $\theta^{n+1}$ around $\theta^n$ and $\theta^{n-1}$ around $\theta^n$ has the form:
$$\widehat{\theta_m^{j+1}}- \widehat{\theta_m^{j-1}}=2\left(\Delta t\widehat{(\theta_{t})^j_m} +\frac{\Delta t^3}{6}\widehat{(\theta_{ttt})^j_m}\right) +O(\Delta t^4),$$
 where, as usual $f_t$ denotes the temporal derivative of $f$. Using the approximation 
$$\frac{\widehat{Lin^{j+1}_m}+\widehat{Lin^{j-1}_m}}{2}=\widehat{Lin^{j}_m}+\Delta t^2\widehat{{Lin_{tt}}^j_m}+O(\Delta t^5),$$
we obtain

 \begin{align}
  &\widehat{\theta_m^{j+1}}- \widehat{\theta_m^{j-1}}=\\
  &2\left(\Delta t\left( \frac{1}{2}(\widehat{Lin^{j+1}_m}+\widehat{Lin^{j-1}_m})+\widehat{NL_m^j} \right) +\frac{\Delta t^3}{6}(\widehat{(\theta_{ttt})^j_m}-6\widehat{{Lin_{tt}}_m^j})\right) +O(\Delta t^4), 
 \end{align}

where $\widehat{Lin^{j}_m}=S_hS_hS_h\widehat{\dot{\theta}_m^j}$. This is equivalent to 
\begin{equation}\label{mainthetaeq}
\widehat{\theta_m^{j+1}}=\zeta_m^1\widehat{\theta_m^{j-1}}+2\Delta t\zeta_m^2  \widehat{ NL_m^j}+\frac{\Delta t^3}{3}({(\widehat{\theta_{ttt})^j}-6\widehat{{Lin_{tt}}_m^j)}}+O(\Delta t^4),
\end{equation}
where $\gamma_m=\Delta t(\frac{2\pi m}{L})^3$, for each wave number $m$. The numerical solution satisfies
  \begin{equation}\label{schemeE}
\widehat{\widetilde{\theta^{1}_m}}=\widehat{\widetilde{\theta^0_m}}+\Delta t (\widehat{\widetilde{L^0_m}}+\widehat{\widetilde{NL^0_m}}),
  \end{equation}
and
  \begin{equation}\label{schemeCN}
  \widehat{\widetilde{\theta_m^{j+1}}}=\zeta_m^1\widehat{\widetilde{\theta_m^{j-1}}}+2\Delta t\zeta_m^2\widehat{\widetilde{ NL_m^j}},
  \end{equation}
for $j=1,..,n$.

 We start simplifying $(\ref{Estep})$ and $(\ref{mainthetaeq})$ noticing that
\begin{equation}\label{thetatt}
(\theta_t)_t=(\theta_{sss}+\frac{\theta_s^3}{2})_t=\theta_{ssst}+\frac{3}{2}\theta_s^2\theta_{st}=[\theta_{sss}+\frac{\theta_s^3}{2}]_{sss}+[\theta_{sss}+\frac{\theta_s^3}{2}]_s\frac{3}{2}\theta_s^2,
\end{equation}
involves spatial derivatives of order $6$ for theta, and we have used the fact that $\theta$ is at least two times continuously differentiable with respect time to commute derivatives. Similarly, we compute:
\begin{equation}
\begin{split}
((\theta_t)_t)_t &=\left( (\theta_{sss}+\frac{1}{2}\theta_s^3)_{sss}+\frac{3}{2}\theta_s^2(\theta_{sss}+\frac{1}{2}\theta_s^3)_s\right)_t\\
& =\theta_{sssssst}+\frac{1}{2}(\theta_s^3)_{ssst}+\frac{3}{2}\theta_s^2(\theta_{sss}+\frac{\theta_s^3}{2})_{st}+(\theta_{sss}+\frac{1}{2}\theta_s^3)_s\frac{3}{2}2\theta_s\theta_{st}=
\end{split}
\end{equation}
\begin{equation}\label{deriv9}
\theta_{tssssss}+\frac{3}{2}[\theta_s^2\theta_{t ssss}+\theta_{ts}(\theta_s^2)_{sss}]+\frac{3}{2}\theta_s^2[\theta_{tssss}+3\theta_s\theta_{ss}\theta_{st}+\frac{3}{2}\theta_s^2\theta_{tss}]+\theta_s[\theta_{ssss}+(\frac{\theta_s^2}{2})_s]^2,
\end{equation}
note that $\theta_{t\underbrace{ss...s}_{k\:times}}=[\theta_{sss}+\frac{\theta_s^3}{2}]_{\underbrace{ss...s}_{k\:times}}$
involves derivatives of order $k+3$ in space. In the expression $(\ref{deriv9})$ the term $\theta_{tssssss}$ contains the most (9 to be precise) derivatives for $\theta$, which by hypothesis $6\leq r$, we know these are $l^2$ integrable. Using computation $(\ref{thetatt})$, observe that ${Lin_{tt}}_m^j={(\theta_{sss})_{tt}}_m^j={(\theta_{ttsss})}_m^j$ also involves spatial derivatives of order 9 for $\theta$.

If $(\ref{Estep})$ is substracted from $(\ref{schemeE})$ and using the fact that $\theta_{tt}$ is $l^2$ integrable  we obtain the equation for the first step,
\begin{equation}\label{Evar}
\widehat{\dot{\theta}^{1}_m}=\widehat{\dot{\theta}^0_m}+\Delta t (\widehat{\dot{L}^0_m}+\widehat{\dot{NL}^0_m})+A_0(\Delta t^2).
\end{equation}

Since the error at initial step $\dot{\theta}_m^0$ is zero,  Plancherel theorem shows
\begin{equation}\label{Eupbo}
||\widehat{\dot{\theta}^1}||^2=O(\Delta t^4).
\end{equation}

From $(\ref{mainthetaeq})$, $(\ref{schemeCN})$ and the fact that $\theta_{ttt}, Lin_{tt}$ are $l^2$ integrable, the error evolution after the second step is:
\begin{equation}\label{CNvar}
\widehat{\dot{\theta}_m^{j+1}}=\zeta_m^1\widehat{\dot{\theta}_m^{j-1}}+2\Delta t\zeta_m^2  \widehat{\dot{ NL}_m^j}+A_0(\Delta t^3).
\end{equation}

 To estimate the error consider the inner product:
\begin{equation} \label{LHS}
\langle \widehat{\dot{\theta}^{j+1}}-\widehat{\dot{\theta}^{j-1}}, \widehat{\dot{\theta}^{j+1}}+\widehat{\dot{\theta}^{j-1}} \rangle.
\end{equation}

A direct calculation $(\ref{dircalc1})$ shows how to rewrite this inner product as
\begin{equation}\label{RHSj}
\begin{split}
&\underbrace{\langle |\zeta^1|^2( \widehat{\dot{\theta}^{j-1}}-\widehat{\dot{\theta}^{j-3}}) ,\widehat{\dot{\theta}^{j-1}}+\widehat{\dot{\theta}^{j-3}}   \rangle}_{J_1^j}+ \\
& \underbrace{4 \Delta t^2\langle |\zeta^2|^2( \widehat{\dot{NL}^j}+\widehat{\dot{NL}^{j-2}}), \widehat{\dot{NL}^j}-\widehat{\dot{NL}^{j-2}}) \rangle}_{J_2^j}+\\
& \underbrace{\langle \zeta^1(\widehat{\dot{\theta}^{j-1}}-\widehat{\dot{\theta}^{j-3}}) ,2\Delta t \zeta^2(  \widehat{\dot{NL}^j}+\widehat{\dot{NL}^{j-2}})\rangle}_{J_3^j}+\underbrace{\langle2\Delta t \zeta^2(  \widehat{\dot{NL}^j}-\widehat{\dot{NL}^{j-2}}),\zeta^1(\widehat{\dot{\theta}^{j-1}}+\widehat{\dot{\theta}^{j-3}}\rangle}_{J_4^j}+\\
& \underbrace{\langle \zeta^1\left(\widehat{\dot{\theta}^{j-1}} -\widehat{\dot{\theta}^{j-3}}\right)+2\Delta t \zeta^2\left( \widehat{\dot{ NL}^j}- \widehat{\dot{ NL}^{j-2}}\right),A_0(\Delta t^3)\rangle}_{J_5^j}+\\
& \underbrace{\langle A_0(\Delta t^3),  \zeta^1\left(\widehat{\dot{\theta}^{j-1}} +\widehat{\dot{\theta}^{j-3}}\right)+2\Delta t \zeta^2\left( \widehat{\dot{ NL}^j}+ \widehat{\dot{ NL}^{j-2}}\right)\rangle}_{J_6^j}.
\end{split}
\end{equation}

When taking the sum over time of the left-hand side $(\ref{LHS})$ we obtain a telescopic sum
\begin{equation}\label{LHSsumt}
\begin{split}
\sum_{j=2}^n \langle \widehat{\dot{\theta}^{j+1}}-\widehat{\dot{\theta}^{j-1}}, \widehat{\dot{\theta}^{j+1}}+\widehat{\dot{\theta}^{j-1}} \rangle&=\sum_{j=2}^n\left(||\widehat{\dot{\theta}^{j+1}}||^2-|| \widehat{\dot{\theta}^{j-1}}||^2+2iIm(\langle \widehat{\dot{\theta}^{j+1}},  \widehat{\dot{\theta}^{j-1}} \rangle)\right) \\
& =||\widehat{\dot{\theta}^{n+1}}||^2+|| \widehat{\dot{\theta}^{n}}||^2-\left( ||\widehat{\dot{\theta}^{2}}||^2+|| \widehat{\dot{\theta}^{1}}||^2\right)+I_1,
\end{split}
\end{equation}
where $I_1$ is an imaginary term.

Now we analyze the sum over time of the right-hand side terms $(\ref{RHSj})$.

%J1
\paragraph*{$J_1$ contribution: \:} a direct calculation shows that
$$J_1^j=|| \widehat{\dot{\theta}^{j-1}}||^2-|| \widehat{\dot{\theta}^{j-3}}||^2+2iIm\langle \widehat{\dot{\theta}^{j-1}}, \widehat{\dot{\theta}^{j-3}}\rangle.$$

Therefore, the sum over time is telescopic too
\begin{equation}\label{upper1}
\sum_{j=2}^{n}J_1^j=|| \widehat{\dot{\theta}^{n-1}}||^2+|| \widehat{\dot{\theta}^{n-2}}||^2-|| \widehat{\dot{\theta}^{1}}||^2-|| \widehat{\dot{\theta}^{0}}||^2+I_2,
\end{equation}
 where $I_2$ is a purely imaginary term.
%J2
\paragraph*{$J_2$ contribution:\: }similarly, 
$$J_2^j=4\Delta t^2\left( || |\zeta^2|^2\widehat{\dot{NL}^{j}}||^2-|||\zeta^2|^2\widehat{\dot{NL}^{j-2}}||^2+2iIm(\langle \zeta^2\widehat{\dot{NL}^{j-2}},\zeta^2\widehat{\dot{NL}^{j}} \rangle)\right),$$
and the sum over time is
$$\sum_{j=2}^nJ_2^j=4\Delta t^2\left(  || |\zeta^2|^2\widehat{\dot{NL}^{n}}||^2+|||\zeta^2|^2\widehat{\dot{NL}^{n-1}}||^2-|||\zeta^2|^2\widehat{\dot{NL}^{2}}||^2-|||\zeta^2|^2\widehat{\dot{NL}^{1}}||^2 +I_3\right),$$
 where $I_3$ is a purely imaginary term. And the following inequality holds
 \begin{equation}\label{upper2}
 Re(\sum_{j=2}^nJ_2^j)\leq 4\Delta t^2( || |\zeta^2|^2\widehat{\dot{NL}^{n}}||^2+|||\zeta^2|^2\widehat{\dot{NL}^{n-1}}||^2).
 \end{equation}
%J3+J_4
\paragraph*{$J_3+J_4$ contribution: \:} consider the sum
\begin{equation}
\begin{split}
& (J_3+J_4)^j=\\
& \langle \zeta^1\widehat{\dot{\theta}^{j-1}},2\Delta t \zeta^2 \widehat{\dot{NL}^j}\rangle+\langle \zeta^1\widehat{\dot{\theta}^{j-1}},2\Delta t \zeta^2 \widehat{\dot{NL}^{j-2}}\rangle\\
& -\langle \zeta^1\widehat{\dot{\theta}^{j-3}},2\Delta t \zeta^2 \widehat{\dot{NL}^j}\rangle-\langle \zeta^1\widehat{\dot{\theta}^{j-3}},2\Delta t \zeta^2 \widehat{\dot{NL}^{j-2}}\rangle\\
& +\langle 2\Delta t \zeta^2 \widehat{\dot{NL}^j}, \zeta^1\widehat{\dot{\theta}^{j-1}}\rangle+\langle 2\Delta t \zeta^2 \widehat{\dot{NL}^j}, \zeta^1\widehat{\dot{\theta}^{j-3}}\rangle\\
& -\langle 2\Delta t \zeta^2 \widehat{\dot{NL}^{j-2}}, \zeta^1\widehat{\dot{\theta}^{j-1}}\rangle-\langle 2\Delta t \zeta^2 \widehat{\dot{NL}^{j-2}}, \zeta^1\widehat{\dot{\theta}^{j-3}}\rangle\\
& =2Re\left( \langle \zeta^1\widehat{\dot{\theta}^{j-1}},2\Delta t \zeta^2 \widehat{\dot{NL}^j}\rangle-\langle \zeta^1\widehat{\dot{\theta}^{j-3}},2\Delta t \zeta^2 \widehat{\dot{NL}^{j-2}}\rangle \right)\\
& +2iIm\left(\langle \zeta^1\widehat{\dot{\theta}^{j-1}},2\Delta t \zeta^2 \widehat{\dot{NL}^{j-2}}\rangle+\langle 2\Delta t \zeta^2 \widehat{\dot{NL}^j}, \zeta^1\widehat{\dot{\theta}^{j-3}}\rangle \right). 
\end{split}
\end{equation}

Therefore, the sum over time is also a telescopic sum
\begin{equation}\label{upper3}
\begin{split}
\sum_{j=2}^n(J_3+J_4)^j &=2Re( \langle \zeta^1\widehat{\dot{\theta}^{n-1}},2\Delta t \zeta^2 \widehat{\dot{NL}^n}\rangle+ \langle \zeta^1\widehat{\dot{\theta}^{n-2}},2\Delta t \zeta^2 \widehat{\dot{NL}^{n-1}}\rangle) +\\
& 2Re(-\langle \zeta^1\widehat{\dot{\theta}^{1}},2\Delta t \zeta^2 \widehat{\dot{NL}^2}\rangle- \langle \zeta^1\widehat{\dot{\theta}^{0}},2\Delta t \zeta^2 \widehat{\dot{NL}^1}\rangle)+I_4,
\end{split}
\end{equation}

 where $I_4$ is a purely imaginary term.

%J5
\paragraph*{$J_5+J_6$ contribution: \:} first notice that
\begin{equation}|J_5|\leq ||\zeta_m^1\left(\widehat{\dot{\theta}_m^{j-1}} -\widehat{\dot{\theta}_m^{j-3}}\right)+2\Delta t \zeta_m^2\left( \widehat{\dot{ NL}_m^j}- \widehat{\dot{ NL}_m^{j-2}}\right)|| C \Delta t^3.\end{equation}

According to $(\ref{mainthetaeq})$ and computation $(\ref{deriv9})$ the constant $C$ depends on spatial derivatives of order 9 for $\theta$. A sharper and technical expression for  the error of the nonlinear term is required $(\ref{DtupperNL})$. The following lemma is proved in Appendix $(\ref{proofnlvar})$.
\begin{lemma}{}\label{nlvarlemma} The nonlinear error satisfies the upper bounds
 \begin{equation}
   |\dot{NL}^j|_{\infty}=h^{-1/2}||\dot{NL}^j||_{l^2}\leq C,
 \end{equation}
 and
   \begin{equation}\label{DtupperNL}
\Delta t ||\dot{NL}^j||_{l_2}=O(h^{r}+\Delta t^2), 
  \end{equation}
  for $j=1,...,n$ provided that $\frac{\Delta t}{h}$ is bounded. 
\end{lemma}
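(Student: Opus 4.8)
The plan is to reduce $\dot{NL}^j$ to the spectral derivative of the solution error times sup-bounded factors, plus a harmless spectral-truncation term. Since $NL=\frac{1}{2}(2\pi/L)^3\theta_\alpha^3$ and the scheme evaluates it pointwise as $\frac{1}{2}(2\pi/L)^3(S_h\widetilde{\theta}^j)^3$ (the difference between the prefactors $(2\pi/\widetilde{L})^3$ and $(2\pi/L)^3$ contributes only an $O(h^{r+3})$ discrepancy through $|\dot{L}|<h^{r+3}$), I first split
\[
(S_h\widetilde{\theta}^j)^3-\theta_\alpha^3=\big[(S_h\widetilde{\theta}^j)^3-(S_h\theta^j)^3\big]+\big[(S_h\theta^j)^3-\theta_\alpha^3\big]
\]
and factor each bracket by $a^3-b^3=(a-b)(a^2+ab+b^2)$. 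In the first bracket $a-b=S_h\dot{\theta}^j$, so its contribution is $S_h\dot{\theta}^j$ times a factor quadratic in $S_h\widetilde{\theta}^j$ and $S_h\theta^j$; in the second bracket $a-b=S_h\theta^j-\theta_\alpha=O(h^{r+2})$ by the spectral accuracy estimate applied to $\theta(\cdot,t_j)\in C^{r+3}$, with a quadratic factor that is bounded because $\theta$ is smooth.

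The decisive tools are the inverse inequalities for trigonometric polynomials of degree $N/2$, namely $||S_h p||_{l^2}\leq Ch^{-1}||p||_{l^2}$ and $|p|_\infty\leq Ch^{-1/2}||p||_{l^2}$, combined with the induction hypothesis $||\dot{\theta}^j||_{l^2}=O(h^r+\Delta t^2)$ built into $T^*$. Applying both to $\dot{\theta}^j$ gives
\[
|S_h\dot{\theta}^j|_\infty\leq Ch^{-1/2}||S_h\dot{\theta}^j||_{l^2}\leq Ch^{-3/2}||\dot{\theta}^j||_{l^2}=O\!\big(h^{r-3/2}+h^{-3/2}\Delta t^2\big).
\]
Because $r$ is large and $\Delta t/h\leq C_0$ forces $h^{-3/2}\Delta t^2\leq C_0^2 h^{1/2}$, this tends to zero; hence $|S_h\widetilde{\theta}^j|_\infty\leq|S_h\theta^j|_\infty+|S_h\dot{\theta}^j|_\infty$ stays bounded and the quadratic factors are bounded in sup norm. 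The first assertion $|\dot{NL}^j|_\infty\leq C$ then follows from $|\dot{NL}^j|_\infty\leq C|S_h\dot{\theta}^j|_\infty+O(h^{r+2})$.

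For the sharper bound $(\ref{DtupperNL})$ I would estimate in $l^2$ using $||fg||_{l^2}\leq|f|_\infty||g||_{l^2}$ together with the sup bound on the quadratic factor:
\[
||\dot{NL}^j||_{l^2}\leq C\,||S_h\dot{\theta}^j||_{l^2}+O(h^{r+2})\leq Ch^{-1}||\dot{\theta}^j||_{l^2}+O(h^{r+2}).
\]
Multiplying by $\Delta t$ and using $\Delta t/h\leq C_0$ converts the dangerous factor $h^{-1}$ into the bounded ratio $\Delta t/h$, so that $\Delta t\,||\dot{NL}^j||_{l^2}\leq CC_0||\dot{\theta}^j||_{l^2}+O(\Delta t\,h^{r+2})=O(h^r+\Delta t^2)$ by the induction hypothesis. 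The main obstacle is exactly the control of $|S_h\dot{\theta}^j|_\infty$ and $||S_h\dot{\theta}^j||_{l^2}$: differentiation amplifies the high-frequency content of the error, and the inverse inequalities pay for this with powers of $h$. The estimate closes only because the surplus regularity of $\theta$ supplies the spare powers hidden in $||\dot{\theta}^j||_{l^2}=O(h^r)$ and, in the final step, the prefactor $\Delta t$ pairs with $h^{-1}$ to form the bounded ratio $\Delta t/h$, so no order of accuracy is lost.
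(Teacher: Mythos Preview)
Your proposal is correct and follows essentially the same route as the paper's proof: both arguments rest on the inverse inequalities $|p|_\infty\leq Ch^{-1/2}\|p\|_{l^2}$ and $\|S_hp\|_{l^2}\leq Ch^{-1}\|p\|_{l^2}$, the spectral truncation bound $S_h\theta^j-\theta_\alpha=O(h^{r+2})$, the induction hypothesis $\|\dot\theta^j\|_{l^2}=O(h^r+\Delta t^2)$ built into $T^*$, and the final absorption of the dangerous $h^{-1}$ by the bounded ratio $\Delta t/h$. The only cosmetic difference is in the algebra: you use the direct factorization $a^3-b^3=(a-b)(a^2+ab+b^2)$, whereas the paper passes through the intermediate quantities $\xi=\tfrac{2\pi}{L}S_h\theta$ and $T=\tfrac12\xi^2$ and expands $\widetilde\xi\widetilde T-\xi T=\dot\xi T+\dot\xi\dot T+\xi\dot T$, arriving at the same leading term $\tfrac32(2\pi/L)^3\theta_\alpha^2\,S_h\dot\theta^j+A_0(\dot\theta^j)+O(h^{r+2})$.
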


  Then, using $|\zeta_m^1|=1$ and $|\zeta_m^2|\leq 1$, induction  and the previous estimate we get
$$|J_5|\leq  \left(  ||\left(\widehat{\dot{\theta}^{j-1}} -\widehat{\dot{\theta}^{j-3}}\right)||+ ||\zeta^2||2\Delta t|| \widehat{\dot{ NL}^j}- \widehat{\dot{ NL}^{j-2}}||\right) C \Delta t^3= \Delta t^3O(h^r+\Delta t^2).$$

In a similar way $|J_6|\leq \Delta t^3O(h^r+\Delta t^2)$. Therefore, 
\begin{equation} \label{upper4}
|\sum_{j=2}^n(J_5^j+J_6^j)|\leq  \Delta t^2O(h^r+\Delta t^2).
\end{equation}

With this information $(\ref{upper1})$,$(\ref{upper2})$,$(\ref{upper3})$,$(\ref{upper4})$, and considering the real part of $(\ref{LHSsumt})$ we obtain
\begin{equation}\label{lasteq}
\begin{split}
&||\widehat{\dot{\theta}^{n+1}}||^2+|| \widehat{\dot{\theta}^{n}}||^2-\left( ||\widehat{\dot{\theta}^{2}}||^2+|| \widehat{\dot{\theta}^{1}}||^2\right) \leq \\
& || \widehat{\dot{\theta}^{n-1}}||^2+|| \widehat{\dot{\theta}^{n-2}}||^2-|| \widehat{\dot{\theta}^{1}}||^2-|| \widehat{\dot{\theta}^{0}}||^2\\
& +4\Delta t^2\left(  || |\zeta^2|^2\widehat{\dot{NL}^{n}}||^2+|||\zeta^2|^2\widehat{\dot{NL}^{n-1}}||^2 \right)\\
& +2Re\left( \langle \zeta^1\widehat{\dot{\theta}^{n-1}},2\Delta t \zeta^2 \widehat{\dot{NL}^n}\rangle+ \langle \zeta^1\widehat{\dot{\theta}^{n-2}},2\Delta t \zeta^2 \widehat{\dot{NL}^{n-1}}\rangle\right)\\
&2Re\left(-  \langle \zeta^1\widehat{\dot{\theta}^{1}},2\Delta t \zeta^2 \widehat{\dot{NL}^2}\rangle- \langle \zeta^1\widehat{\dot{\theta}^{0}},2\Delta t \zeta^2 \widehat{\dot{NL}^1}\rangle\right)+ \Delta t^2O(h^r+\Delta t^2).
\end{split}
\end{equation}

From $(\ref{Eupbo})$, we get $||\widehat{\dot{\theta}^1}||^2=O(\Delta t^4)$. For the second step $\widehat{\dot{\theta}_m^{2}}=\zeta_m^1\widehat{\dot{\theta}_m^{0}}+2\Delta t \zeta_m^2  \widehat{\dot{ NL}_m^1}+A_0(\Delta t^3)$, we use the estimate $(\ref{DtupperNL})$ to get
\begin{equation}\label{secondstepvar}
||\widehat{\dot{\theta}^2}||^2=O((h^r+\Delta t^2)^2).
\end{equation}

Then by induction, we find that
$$4\Delta t^2\left(  || |\zeta^2|^2\widehat{\dot{NL}^{n}}||^2+|||\zeta^2|^2\widehat{\dot{NL}^{n-1}}||^2\right),||\widehat{\dot{\theta}^n}||^2,||\widehat{\dot{\theta}^{n-1}}||^2,||\widehat{\dot{\theta}^{n-2}}||^2=O((h^r+\Delta t^2)^2).$$

Also, using  Cauchy-Schwarz, triangle inequality, and Plancherel theorem we find
$$| \langle \zeta_m^1\widehat{\dot{\theta}_m^{n-1}},2\Delta t \zeta_m^2 \widehat{\dot{NL}^n}\rangle+ \langle \zeta_m^1\widehat{\dot{\theta}_m^{n-2}},2\Delta t \zeta_m^2 \widehat{\dot{NL}^{n-1}}\rangle|=O((h^r+\Delta t^2)^2),$$
$$| \langle \zeta_m^1\widehat{\dot{\theta}_m^{1}},2\Delta t \zeta_m^2 \widehat{\dot{NL}^2}\rangle|\leq \Delta t^2O(h^r+\Delta t^2).$$

Finally, from $(\ref{lasteq})$ we conclude
\begin{equation}||\widehat{\dot{\theta}^{n+1}}||^2\leq O(\Delta t^4)+O((h^r+\Delta t^2)^2)+\Delta t^2O(h^r+\Delta t^2)=O((h^r+\Delta t^2)^2).\end{equation}

Therefore $||\dot{\theta}^{n+1}||_{l^2}=O(h^r+\Delta t^2)$. As a consequence, the upper bound holds for a longer time ($j=n+1$) than $T^*$ $(\ref{timeHyCN})$, and $T^*=T$ as desired.{\hfill\ensuremath{\square}}
\end{proof}
\begin{proof}[Proof of Corollary \ref{cnadblemma}]\label{proofcnadb}
The average of the upper bound for CN scheme $(\ref{Eupbo})$, and ADB scheme $(\ref{vt2a})$ can be applied as upper bound for this $(\ref{EulerCNADB})$ scheme. In this way, the proof of convergence for CN implies the convergence for CNADB.{\hfill\ensuremath{\square}}
  \end{proof}
%%****************************************************************************************************
%%****************************************************************************************************
 \section{Linear Analysis}
  \label{sec:4}
A first accuracy test can be derived comparing the numerical results with a linear approximation of the solution. Consider the dynamics of nearly circular planar curve of the form:
\begin{equation}\label{linearX}
X(\alpha,t)=r(\alpha,t)(\cos \alpha,\sin\alpha),\:\:r(t)=R(t)+\delta_R \cos(m\alpha)-\delta_I\sin(m\alpha),
\end{equation}
where $m\in\mathbb Z$ is the wave number, $\delta_R(t)$, $\delta_I(t)$ are perturbations and $\alpha\in [0,2\pi]$.

Using equation $(\ref{linearX})$ in Airy flow $(\ref{Airy})$ and equation $(\ref{velproj})$ we find that  
\begin{equation}\label{linone}
\begin{split}
X_t&=\left( R_t +(\delta_R)_t \cos(m\alpha)-(\delta_I)_t \sin(m\alpha) \right)(\cos \alpha,\sin \alpha)\\
& =\delta_I \tau \cos(m\alpha)+ \delta_R \tau \sin(m\alpha)+O(\delta^2),
\end{split}
\end{equation}
where $f_t$ denotes the temporal derivative of $f$ and
\begin{equation}
\tau=\frac{(m^3-1.5m)}{R^3}.
\end{equation}

Matching the left and right hand sides of equations $(\ref{linearX})$ and $(\ref{linone})$ yields $R_{t}=0$ and ${\delta_R}_{t t}=\tau {\delta_I}_{t}=-\tau^2\delta_R$, which can be solved exactly to give:
  \begin{equation}
   \delta_R=-\delta_I(0)\sin(\tau t)+\delta_R(0)\cos(\tau t), \label{deltaR}
 \end{equation}
   \begin{equation}
  \delta_I=\delta_I(0)\cos(\tau t)+\delta_R(0)\sin(\tau t), \label{deltaI}
 \end{equation}
and $R(t)=R_0$.  For simplicity, we take $\delta_I(0)=0$ and denote $\delta_0=\delta_R(0)$ to obtain the linear evolution for Airy flow 
  \begin{equation}\label{linearxy}
   X_L(\alpha,t)=r_L(\alpha,t)(\cos \alpha,\sin\alpha),\:\:  r_L(t)=R_0+\delta_0 cos (\tau t+m\alpha).
 \end{equation}
 
Also, the curvature is:
  \begin{equation}\label{lineark}
  k_L(\alpha,t)=\frac{1}{R_0}+\frac{m^2-1}{R_0^2}\delta_0cos(\tau t+m \alpha)+O(\delta_0^2).
  \end{equation}
  
  From the numerical solution, we may calculate the corresponding radius $\widetilde{R_N}$ and perturbation $\widetilde{\delta_N}$
 
Now we recover the numerical perturbation $\widetilde{\delta_N}$ and  radius $\widetilde{R_N}$ using the approximations: 
\begin{equation}\label{RecRad}
\widetilde{R_N}\approx \sqrt{\frac{\widetilde{Area}}{\pi}},\:\: \widetilde{Area}=\frac{1}{2}\int (x,y)\cdot \textbf{n}s_{\alpha}d\alpha,
\end{equation}
 and 
 \begin{equation}\label{RecPer}
 \widetilde{\delta_N}\approx max_{\alpha}(\sqrt{x^2+y^2}-\widetilde{R_0}).
 \end{equation}

\begin{figure}[H]
\begin{multicols}{2}
\begin{minipage}[ht]{1\textwidth}{}
    \begin{subfigure}[b]{0.5\textwidth}
  \includegraphics[width=\linewidth]{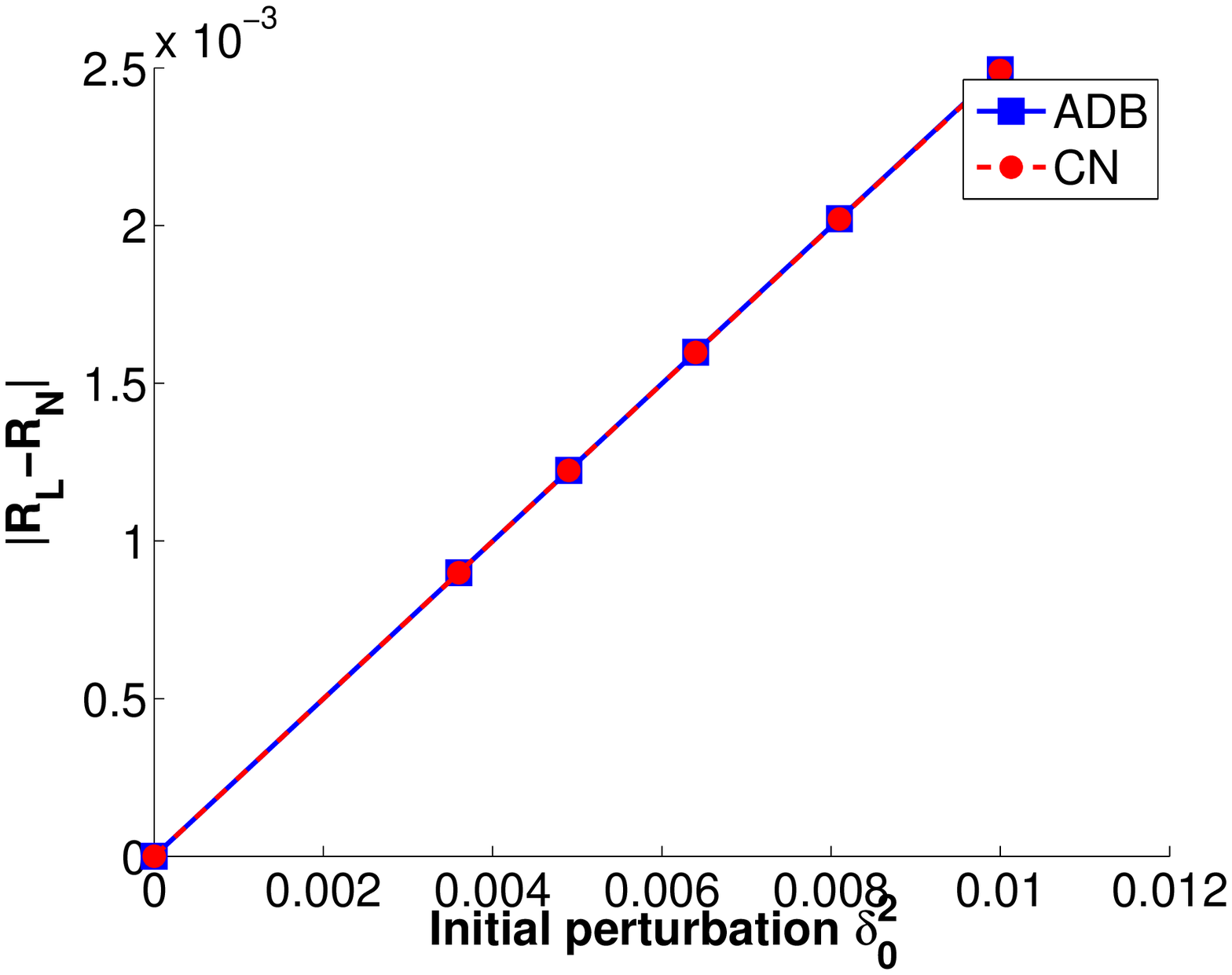}
      \caption{A comparison of the radii from linear analysis and nonlinear simulations of a perturbed circle}
  \label{fig:LinR}
  \end{subfigure}
\end{minipage}

\columnbreak

\begin{minipage}[ht]{1\textwidth}{}
    \begin{subfigure}[b]{0.5\textwidth}
  \includegraphics[width=\linewidth]{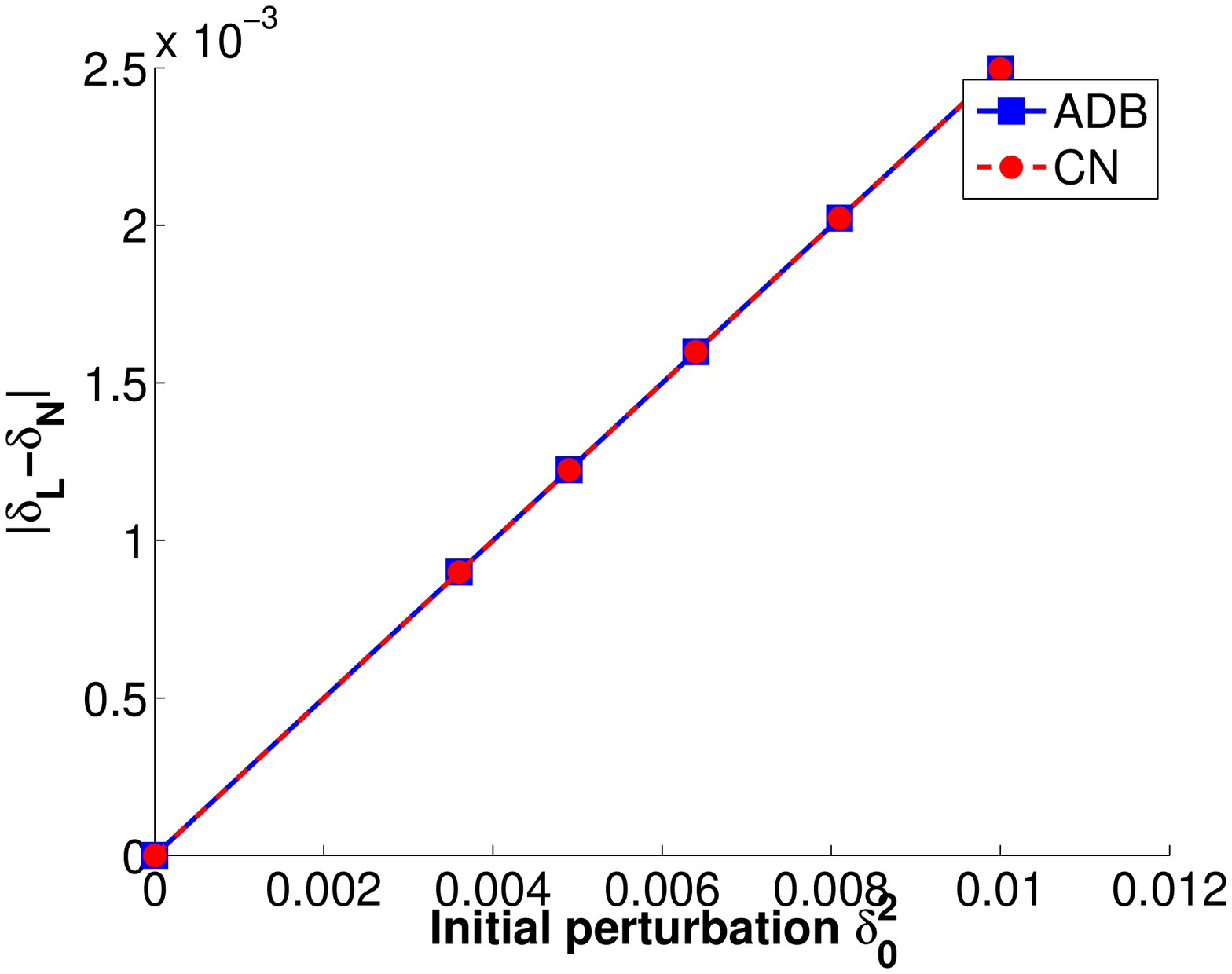}
    \caption{A comparison of the perturbation size from linear analysis and nonlinear simulations of a perturbed circle}
  \label{fig:Lind}
    \end{subfigure}
\end{minipage}
\end{multicols}
\caption{Linear analysis solutions computed at time $t=.1$, using $N=512,\Delta t=1\times 10^{-3}$, initial perturbations  $\delta_0\in\{0,.06,.07,.08,.09,.1\}$, and the wavelength $m=2$ was perturbed }
\label{Linm2}
\end{figure}

In figures $(\ref{fig:LinR})$ and $(\ref{fig:Lind})$  we show a comparison between the analytical and numerical results computed with ADB and CN schemes using an initial radius $R_0=1$. The results confirm accuracy up to order two with respect to the original perturbation size $\delta_0$ for small perturbations at early times
\begin{equation}\label{linerror}
\delta_L-\widetilde{\delta_N}\approx O(\delta_0^2),\:\: R_L-\widetilde{R_N}\approx O(\delta_0^2).
\end{equation}

This provides numerical evidence that the numerical solution is converging to the correct analytical solution at early times. For longer periods of time or bigger perturbations, the error $(\ref{linerror})$ displayed more variation due nonlinear interactions. This variation  remained bounded by a factor depending on the original perturbation size.
%****************************************************************************************************

\section{Numerics }
\label{sec:5}
As a second accuracy test, we consider the numerical conservation of three first integrals of motion for the mKdV equation (quantities that must be preserved over time, see computation (\ref{conservq})):
\begin{equation}\label{firstint}
\begin{split}
&M1=\int kds,\\
&M2=\int k^2ds,\\
&M3=\int (\frac{1}{2}k_s^2-\frac{1}{8}k^4)ds.
\end{split}
\end{equation}

Physically, $M1$, $M2$, and $M3$ can be interpreted as the corresponding mass, momentum and energy (\cite{MiuraKdV}, \cite{KdVConservLaws}, \cite{wwpoub}) of the system.  We focus on $M3$ as it is more sensitive to the choice of numerical scheme.

%****************************************************************************************************  
\subsection{Convergence and numerical conservation} 
We analyze the accuracy, stability, and convergence of the code by considering the evolution of several curves, starting with an ellipse:
\begin{equation}\label{Eeq2}
E:= (x(\alpha,0),y(\alpha,0))=(cos \alpha,\frac{1}{2}\sin \alpha),\alpha\in [0,2\pi].
\end{equation} 

In figure $(\ref{fig:Exy})$ we observe the evolution of this initial condition $(\ref{Eeq2})$ over a period of time $T=2$ under Airy flow. The mKdV solutions are obtained from the curvature of the curve in figure $(\ref{fig:Ek})$.

\begin{figure}[H]
\begin{multicols}{2}
\begin{minipage}[ht]{1\textwidth}{}
    \begin{subfigure}[b]{0.45\textwidth}
  \fbox{\includegraphics[width=1\linewidth]{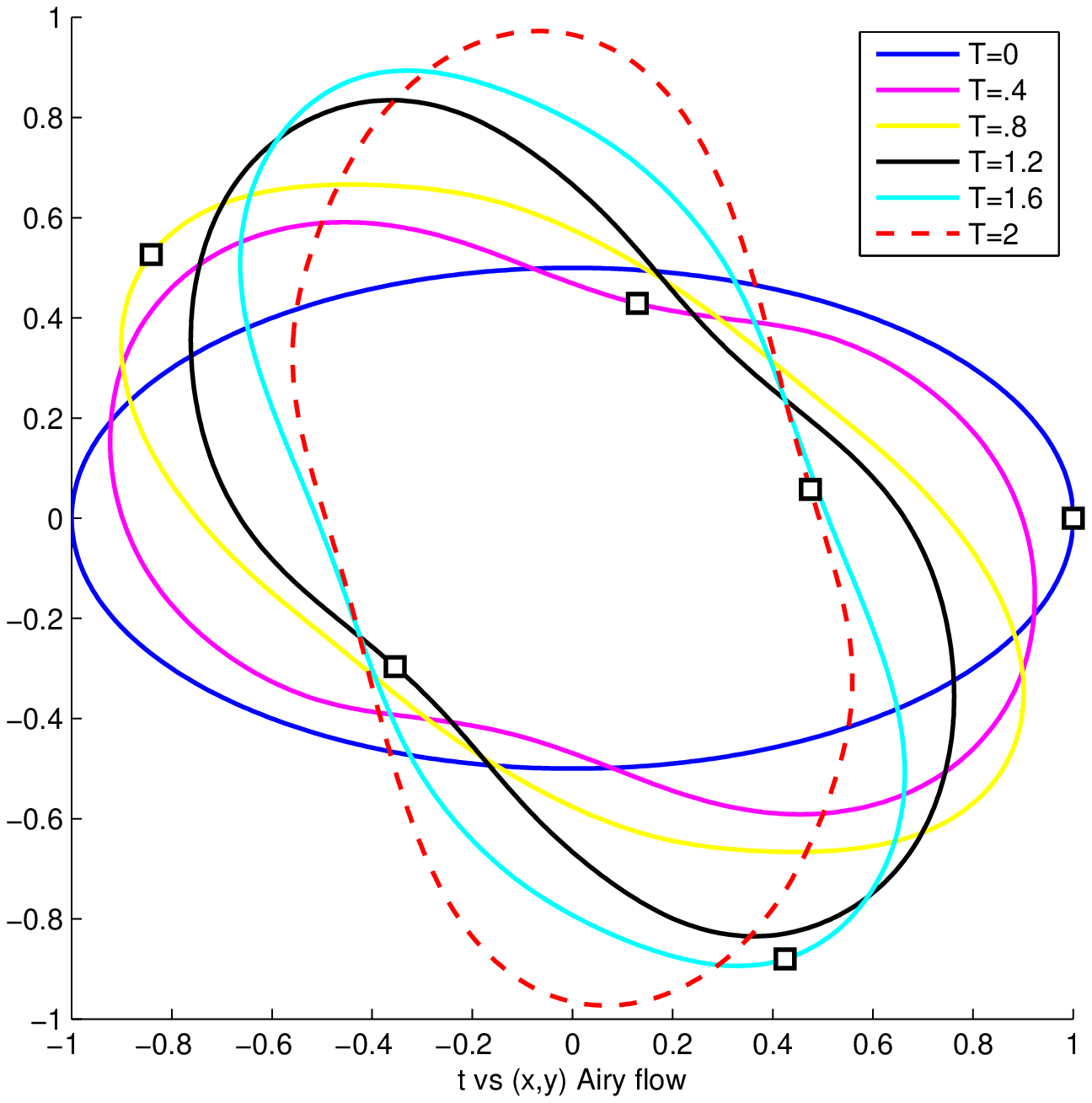}{}}
%  \caption{ Recovering perturbation.}
    \caption{ Airy flow evolution}
  \label{fig:Exy}
  \end{subfigure}
\end{minipage}

\columnbreak

\begin{minipage}[ht]{1\textwidth}{}
    \begin{subfigure}[b]{0.45\textwidth}
  \fbox{\includegraphics[width=1\linewidth]{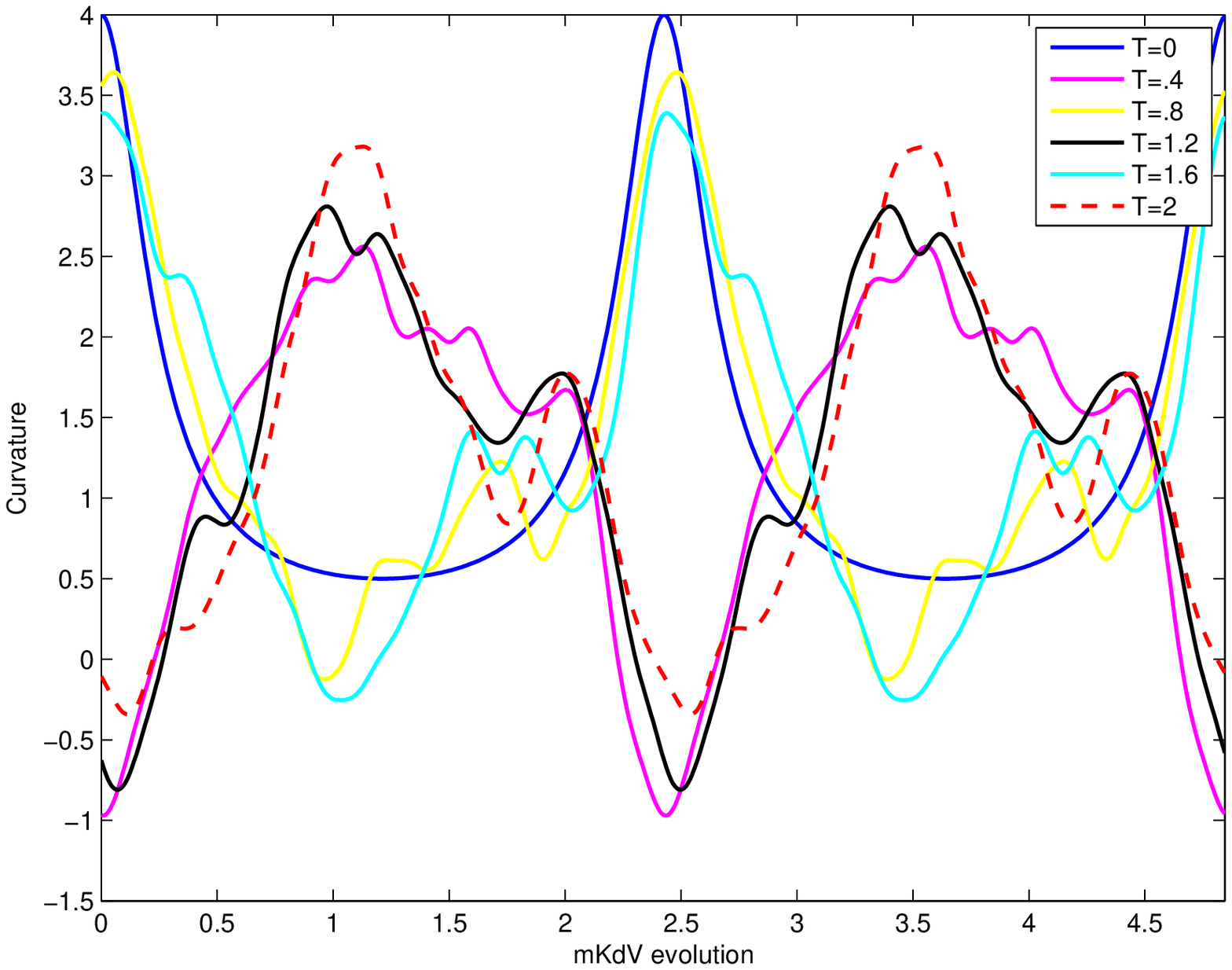}{}}
      \caption{mKdV evolution}
  \label{fig:Ek}
    \end{subfigure}
\end{minipage}
\end{multicols}
  \caption{Morphology evolution for the interface $(\ref{fig:Exy})$ and its curvature $(\ref{fig:Ek})$ with an ellipse $(\ref{Eeq2})$ as initial configuration. Results computed using the CNADB code, $N=512$, and $\Delta t=5\times 10^{-4}$}
  \label{figElmKdV} 
\end{figure}

 The analysis of convergence in time for the ellipse (E) given in table $(\ref{tconv})$ shows that the three schemes converge with second order accuracy. ADB is the most accurate implementation at early times. However, as shown in figure $(\ref{fig:ETH3})$ this scheme also exhibits instabilities (discussed later). For this reason smaller time steps were used to compute ADB than those used for CN and CNADB schemes. The figure $(\ref{fig:ETH3})$, shows the errors over time for the CN, ADB and CNADB schemes. Observe how CNADB combines the accuracy of the ADB scheme with the stability of the CN scheme. In table $(\ref{sconv})$ convergence in space is analyzed and the schemes are found to converge  with spectral accuracy  \footnotemark\footnotetext[1]{The error is obtained with norm $(\ref{Lerrors})$}. Results are dominated by temporal errors.

\begin{figure}[H]
\minipage{1\textwidth}
  \hbox{\includegraphics[width=\linewidth]{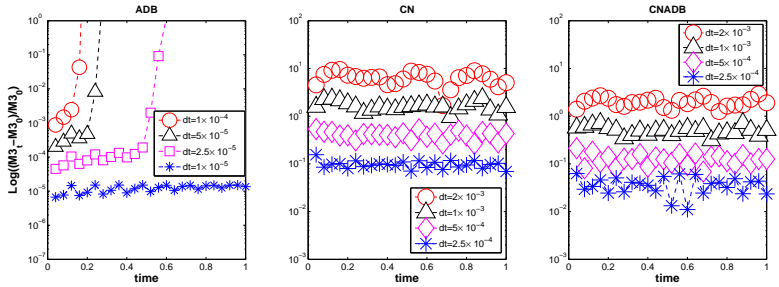}{}}\hfill
\endminipage
  \caption{The relative error in $M_3$ for the ADB, CN, CNADB schemes. The methods use $N=512$ for an ellipse $(\ref{Eeq2})$ as initial configuration}
\label{fig:ETH3}
\end{figure}

\begin{table}[H]
			\caption{Convergence in time. Computed with $N=512$ points in space. The perturbed circle (PC) $(\ref{pcm3})$, was computed with $\Delta t\in\{2\times 10^{-5}, 1\times 10^{-5},5\times 10^{-6}\}$ for the ADBDPR (ADB with DPR filter $(\ref{Filter}),(\ref{Filter2})$), CN schemes, while using $\Delta t\in\{1\times 10^{-5}, 5\times 10^{-6},2.5\times 10^{-6}\}$ for CNADB. The cardioid (C) was calculated with $\Delta t\in\{4\times 10^{-4}, 2\times 10^{-4},1\times 10^{-4}\}$ for ADBDPR and CNADB schemes, while using $\Delta t\in\{2\times 10^{-4}, 1\times 10^{-4},5\times 10^{-5}\}$ for CN. The ellipse (E) was computed with $\Delta t\in\{1\times 10^{-4},5\times 10^{-5},2.5\times 10^{-5}\}$ for ADB, while using $\Delta t\in\{2\times 10^{-3},1\times 10^{-3},5\times 10^{-4}\}$ for CN, and CNADB schemes}
	\label{tconv}
\begin{center}			
	\begin{tabular} {|c|c|c|c|c|c|c|}
		\hline
			Curve &Scheme  & $t_0$ &$||\theta_{\Delta t}-\theta_{\Delta t/2}||_{l^2}$ & $||\theta_{\Delta t/2}-\theta_{\Delta t /4}||_{l^2}$ & $Log_2(\frac{||\theta_{\Delta t}-\theta_{\Delta t/2}||_{l^2}}{||\theta_{\Delta t/2}-\theta_{\Delta t/4}||_{l^2}})$\\  
		\hline
			E &ADB& .16&7.13971e-05 & 1.36433e-05 & 2.38767     \\ 
			 E & CN & .92 &0.670386 & 0.180224 &  1.8952  \\ 
			 E & CNADB & .92 &   0.297091 & 0.0823008  &  1.85193  \\
		\hline
			PC &ADBDPR& .5& 0.0667705  & 0.0113749 & 2.55335 \\ 			
			 PC & CN & 2.8 &0.474292 & 0.119589& 1.98769   \\ 			 
			 PC & CNADB & 4.4 & 0.277312 & 0.0509842&  2.44339  \\ 
		\hline
			C &ADBDPR& .5& 0.0285535 & 0.0068092 & 2.06811  \\ 
			 C & CN & .5 & 0.43557 & 0.107085 &  2.02415  \\ % el de enmedio
			 C & CNADB & .5 & 0.584338 & 0.10983  &  2.41153  \\ 
		\hline		
		\end{tabular}
\end{center}
\end{table}

 \begin{table}[H]
 		        \caption{Convergence in space. The perturbed circle (PC) was computed with $\Delta t=5\times 10^{-6}$, and  $N\in\{512,1024,2048\}$ for the CN,CNADB schemes, while using  $N\in\{256,512,1024\}$ for the ADBDPR (ADB with DPR filter $(\ref{Filter}),(\ref{Filter2})$). The cardioid (C) was calculated with $N\in\{256,512,1024\}$, $\Delta t=5\times 10^{-5}$.  The ellipse (E) was computed with $N\in\{128,256,512\}$, $\Delta t=5\times 10^{-4}$ for the CN CNADB schemes, while using  $\Delta t=1\times 10^{-5}$ for ADB}
			\label{sconv}
\begin{center}
	\begin{tabular} {|c|c|c|c|c|c|c|c|}
		\hline
			Curve& Scheme & $t_0$&  $||\theta_h-\theta_{h/2}||_{l^2}$ & $||\theta_{h/2}-\theta_{h/4}||_{l^2}$  & $Log_2(\frac{||\theta_{h}-\theta_{h/2}||_{l^2}}{||\theta_{h/2}-\theta_{h/4}||_{l^2}})$ \\  
		\hline
			E &ADB& .1& 1.93074e-10 & 8.56718e-13&  7.81612  \\ 			
			 E & CN &1 &  8.1654e-08 & 2.20176e-11 &  11.8566   \\ 				 
			 E & CNADB & 1 &  4.22694e-08 & 1.09587e-11   &  11.9133 \\  			
		\hline
			PC&ADBDPR& 1 & 0.0132975 & 0.000542068  &4.61654  \\ 
			 PC &CN & 4.5 & 0.146292 & 2.98049e-07   & 18.9049   \\ 
			   PC &CNADB & 4.5 & 0.0154042 & 8.14715e-08 &17.5286  \\ 			  
		\hline
			C &ADBDPR& 1 & 0.000452806 & 1.51282e-05& 4.90358  \\ 
			C &CN & 5 & 4.46467e-05 & 1.61903e-08 &   11.4292  \\ 
			C &CNADB & 5 &3.18072e-05 & 4.16487e-09 &  12.8988   \\ 			 
		\hline			
		\end{tabular}
\end{center}
		
\end{table}

%****************************************************************************************************  

\subsection{Influence of curvature} \label{LvsNL} 
We considered the evolution of other two ellipses over a period of time $T=2$ to test the dependence of the temporal discretization size $\Delta t$ with respect the curvature and spatial resolution $h$,
\begin{equation}\label{Eeq13}
\begin{split}
E_1&:= (x(\alpha,0),y(\alpha,0))=(\cos \alpha,\frac{\sqrt 2}{2}\sin \alpha),\alpha\in [0,2\pi],\\
 E_2&:= (x(\alpha,0),y(\alpha,0))=(\cos \alpha,\frac{\sqrt{\sqrt 2}}{2}\sin \alpha),\alpha\in [0,2\pi].
\end{split}
\end{equation} 

 \begin{table}[H]
 	\caption{Accuracy analysis for three ellipses over a period of time $T=2$ with CNADB scheme. Columns $\Delta t_i$'s represent the time-discretization used to evolve ellipse $E_i$ with the number of points in space indicated on each row. The $\xi_i$'s represent the maximum relative error in $M_3$ over a period of time $0\leq T\leq 2$}
	\label{Es}
 \begin{center}
	\begin{tabular} {|c|c|c|c|c|c|c|c|}
		\hline
			N & $\Delta t_1$   & $\xi_1$ & $\Delta t_2$  & $\xi_2$ & $\Delta t_3$ & $\xi_3$  \\
		\hline
			256 & .001 & .018 & $5\times 10^{-4}$ & .03 & $2.5\times 10^{-4} $& .045   \\ 
			512 &$ 5\times 10^{-4}$ &.01& $2.5\times 10^{-4}$ & .012& $1.25\times 10^{-4} $&.022 \\ 
		       1024 & $2.5\times 10^{-4}$& $5\times 10^{-3}$  & $1.25\times 10^{-4}$&$ 4.5\times 10^{-3}$ & $6.25\times 10^{-5}$ & $7\times 10^{-3}$\\ 
		\hline
			\end{tabular}
\end{center}
\end{table}
 The maximum value of the square of the initial curvature $(max|k^0|^2)$ is 16 for $E=E_3$, 8 for E2, and 4 for E1. We fix an interval of evolution for these curves $T=2$. Dynamics at the interface changes for different initial configurations. A particle at the interface of the ellipse $E1$ completes less than half of a loop around its center of mass. While a particle at  the interface of $E3$ covers a complete loop in the same period of time. In each row of table $(\ref{Es})$, the number of points in space $N$ is fixed and the size of the time discretization is adjusted according to $\Delta t\approx (1/N)/(max|k^0|^2)$. Observe that the errors $\xi_i=({M_3}_t-{M_3}_0)/{M_3}_0$ are about the same order in each case. Increasing the number of points at the interface by a factor of two (moving between consecutive rows) corresponds to reducing by half the time step size to attain errors of order $O(10^{-2})$ or smaller. These results confirm the theoretical linear constraint  $\Delta t\leq Ch$ between space-time discretizations, where $C$ has (at least) a direct dependence with the square of the overall curvature $(\ref{upperNL})$ during this period of time. In fact the proof indicates $(\ref{Evar}),(\ref{CNvar})$ that C depends on derivatives of order 9 for $\theta$.

%****************************************************************************************************  
\subsection{Stability and filters} 
Instabilities may arise for shapes with larger curvatures, or longer computations in time due to aliasing error, high order derivatives and nonlinearities involved (\cite{Eitan}, \cite{hussaini}). These instabilities have a stronger effect on the ADB scheme. The analysis of convergence in time for ADB scheme (see figure $(\ref{fig:ETH3})$, and table $(\ref{tconv})$) show that reducing the time steps helps to regulate these errors. We found that a more efficient way to control these instabilities and retain accuracy is to apply a high-order Fourier filter to the nonlinear term. The combination of the spectral derivative and the filtering is denoted by $D_h$, (\cite{Ceniceros}):
\begin{equation}
\widehat{D_h(f_m)}=\widehat{S_h(f_m)^p}:=im\rho(\frac{mh}{\pi}\widehat{f_m})\widehat{f_m},\label{filterd}
\end{equation}
for $m=-\frac{N}{2}+1,..,\frac{N}{2}$.

% fix 2 pi on filters
The filter has two components $\rho(\frac{mh}{\pi}\widehat{f_m})=\rho_1(\frac{mh}{\pi})\rho_2(\widehat{f_m})$. The first one is defined as
 \begin{equation}\label{Filter}
\rho_1(x)=\begin{cases}
     e^{   1-\frac{1}{16(1+x)^4} }   & \text{if $x\in[-1,-.5]$}\\
    1 & \text{if $x\in(-.5,.5)$}\\
        e^{   1-\frac{1}{16(1-x)^4} }     & \text{if $x\in[.5,1]$}\\
    0 & \text{otherwise},\\
  \end{cases}
\end{equation}
which  damps out higher mode instabilities (\cite{Lo1}). Motivated by the time-continuous and spatially discrete analysis (\cite{JH}, \cite{Ceniceros}), the filter $(\ref{Filter})$ is used to stabilize aliasing error from the discrete product rule, we call this DPR filter. Following (\cite{JH}, \cite{Ceniceros}), $\rho_1$ is constructed to satisfy the following:  $\rho_1(x)=\rho_1(-x)$,  $\rho_1(\pm1)=0$, $\rho_1$ is positive, and is $C^2$, with  $\rho_1(x)=1$ for $0\leq x<.5$ this last condition ensures spectral accuracy $(\ref{specdecay})$. The proof of convergence presented in section $(\ref{sec:2})$ works in the same way even with the application of this filter. 
    
 The second filter  (\cite{Krasny}),
  \begin{equation} \label{Filter2}
\rho_2(\widehat{x_m})=\begin{cases}
     0 & \text{if $||\widehat{x_m}||<10^{-13}$}\\
    1 & \text{otherwise},
  \end{cases}
\end{equation}
cuts off the effect of Fourier modes with small amplitudes. This prevents the accumulation of round off error. 

We call the scheme ADBDPR when applying $\rho_1$, and ADBK when applying $\rho_2$. For comparison, the filter is also applied to the CN scheme, referred to as CNK, CNDPR correspondingly. Results of these schemes are presented in $(\ref{fig:Spectrum})$,$(\ref{fig:M3})$ at time $T=.5$ for the ellipse $E_3$. In figure $(\ref{fig:Spectrum})$ the power spectrum $||\widehat{\theta_m}||^2$ is plotted while in figure $(\ref{fig:M3})$ the relative error in $M_3$  is shown. The application of the filter shows a remarkable improvement in stability for ADB. In particular, the error in $M_3$ at $T=.5$ is bounded by $\xi_3=.01$ with $N=512$ points in space and $\Delta t=1\times 10^{-4}$ (compare with table $(\ref{Es})$). The DPR filter only slightly improves the stability of the CN scheme and is not needed over this time period. Filter is not needed for CNADB scheme. In general Krasny filter has no significant effects over the results (ADBK,CNK).
\begin{figure}[H]
    \begin{subfigure}[b]{1\textwidth}
  \hbox{\includegraphics[width=\linewidth]{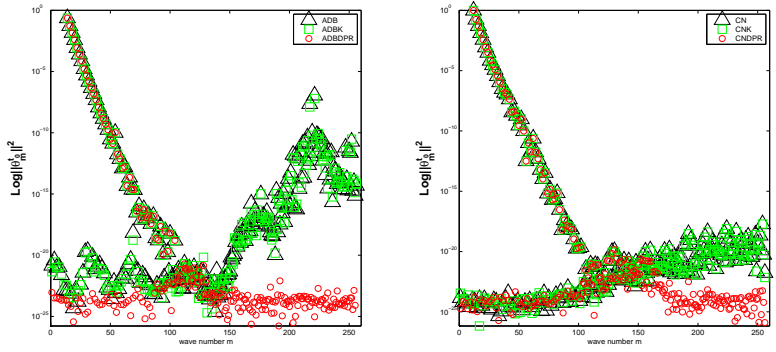}{}}\hfill
  \caption{The effects of filtering on the power spectrum $||\widehat{\theta_m}||^2$ at $t=.5$ for ADB and CN codes}\label{fig:Spectrum}
\end{subfigure}
\begin{center}
    \begin{subfigure}[b]{0.75\textwidth}
  \hbox{\includegraphics[width=\linewidth]{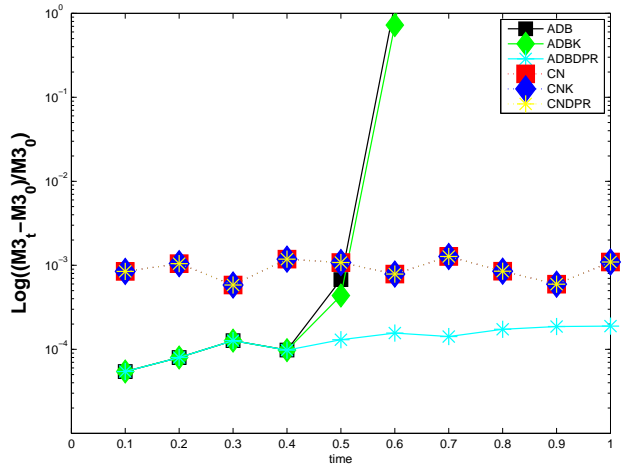}{}}\hfill
  \caption{The effects of filtering on the relative error in $M3$ over time}\label{fig:M3}
\end{subfigure}
\end{center}
  \caption{Relative error and power spectrum computed with $N=512$ and $\Delta t=2.5\times 10^{-5}$ using the initial curve E3 $(\ref{Eeq2})$}
\label{spect}
\end{figure}

%****************************************************************************************************  
\subsection{Linear vs nonlinear dynamics (a stiff perturbed circle) }
Next, we consider a more complex initial curve given by a large perturbation of a circle (PC):
\begin{equation}\label{pcm3}
(x(\alpha,0),y(\alpha,0))=r(\cos\alpha,\sin\alpha),\alpha\in [0,2\pi]\:,r=1+0.4\cos(3\alpha).
\end{equation}

In figure $(\ref{PCdyn})$, we present the morphologies of the evolving curve (solid) and, in figures $(\ref{fig:PCp4xy0})-(\ref{fig:PCp4xy4p5})$, the linear approximation (dashed). The corresponding curvatures are given in figures $(\ref{fig:PCp4k0})-(\ref{fig:PCp4k4p5})$. The linear and nonlinear results differ significantly because of the large initial perturbation. Further, nonlinear simulation shows the development of dispersive waves that travel along the interface. This is clearly evident in the curvature. 

 \begin{figure}[H]
    \begin{subfigure}[b]{.22\textwidth}
  \hbox{\includegraphics[width=\linewidth]{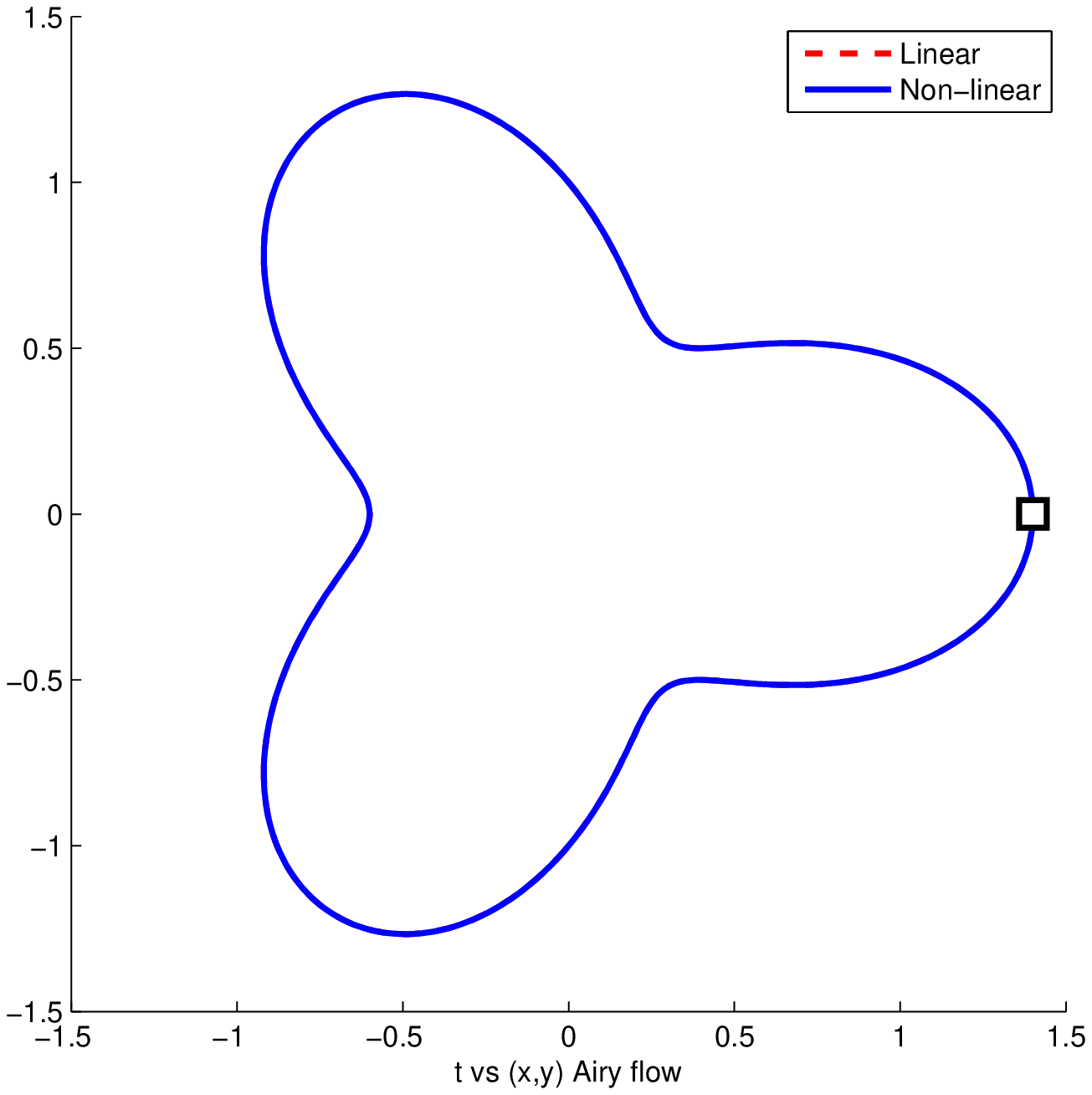}{}}\hfill
      \caption{T=0}\label{fig:PCp4xy0}
 \end{subfigure}
    \begin{subfigure}[b]{.22\textwidth}
  \hbox{\includegraphics[width=\linewidth]{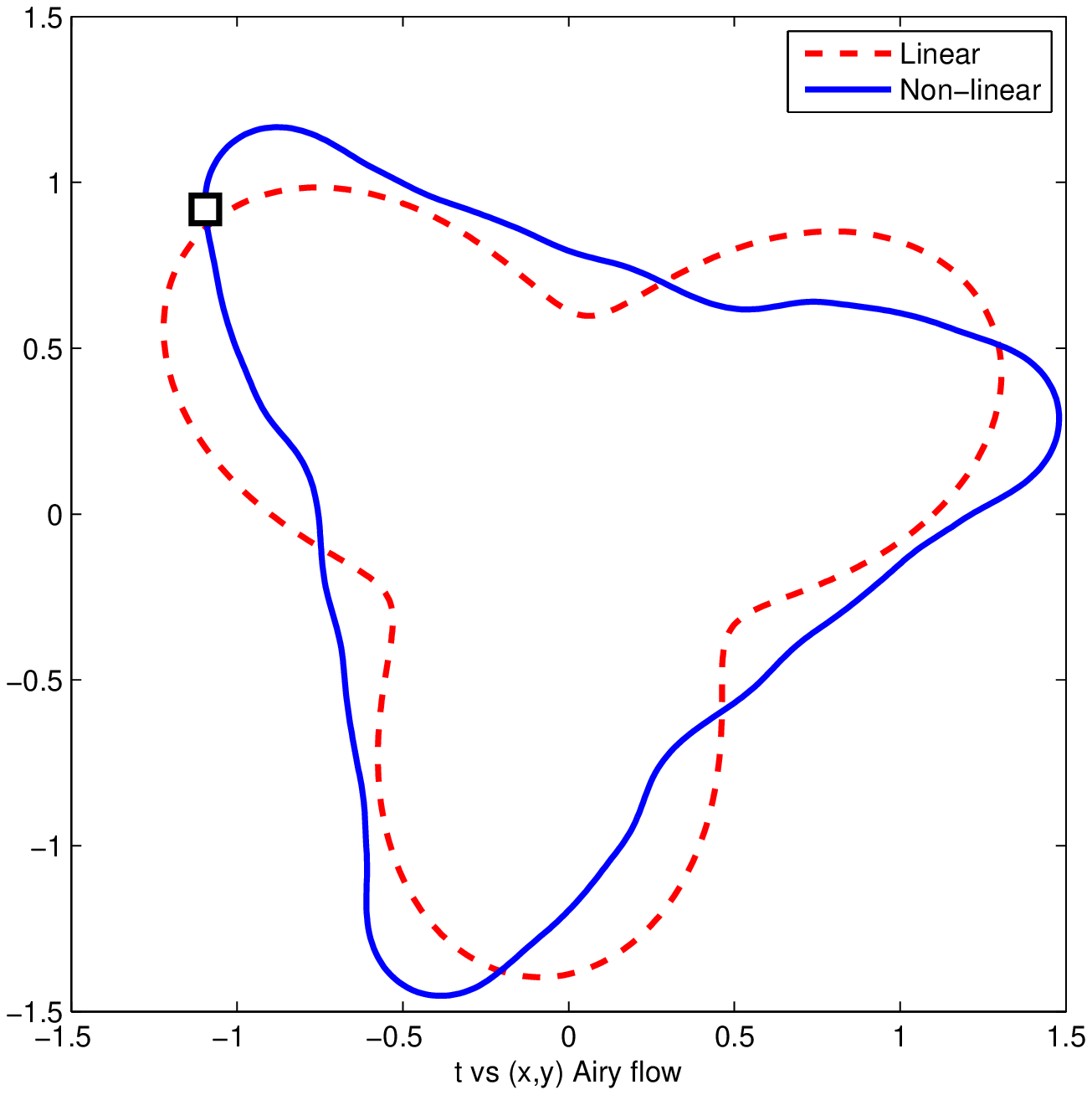}{}}\hfill
      \caption{T=.5}\label{fig:PCp4xyp5}
 \end{subfigure}
    \begin{subfigure}[b]{.22\textwidth}
  \hbox{\includegraphics[width=\linewidth]{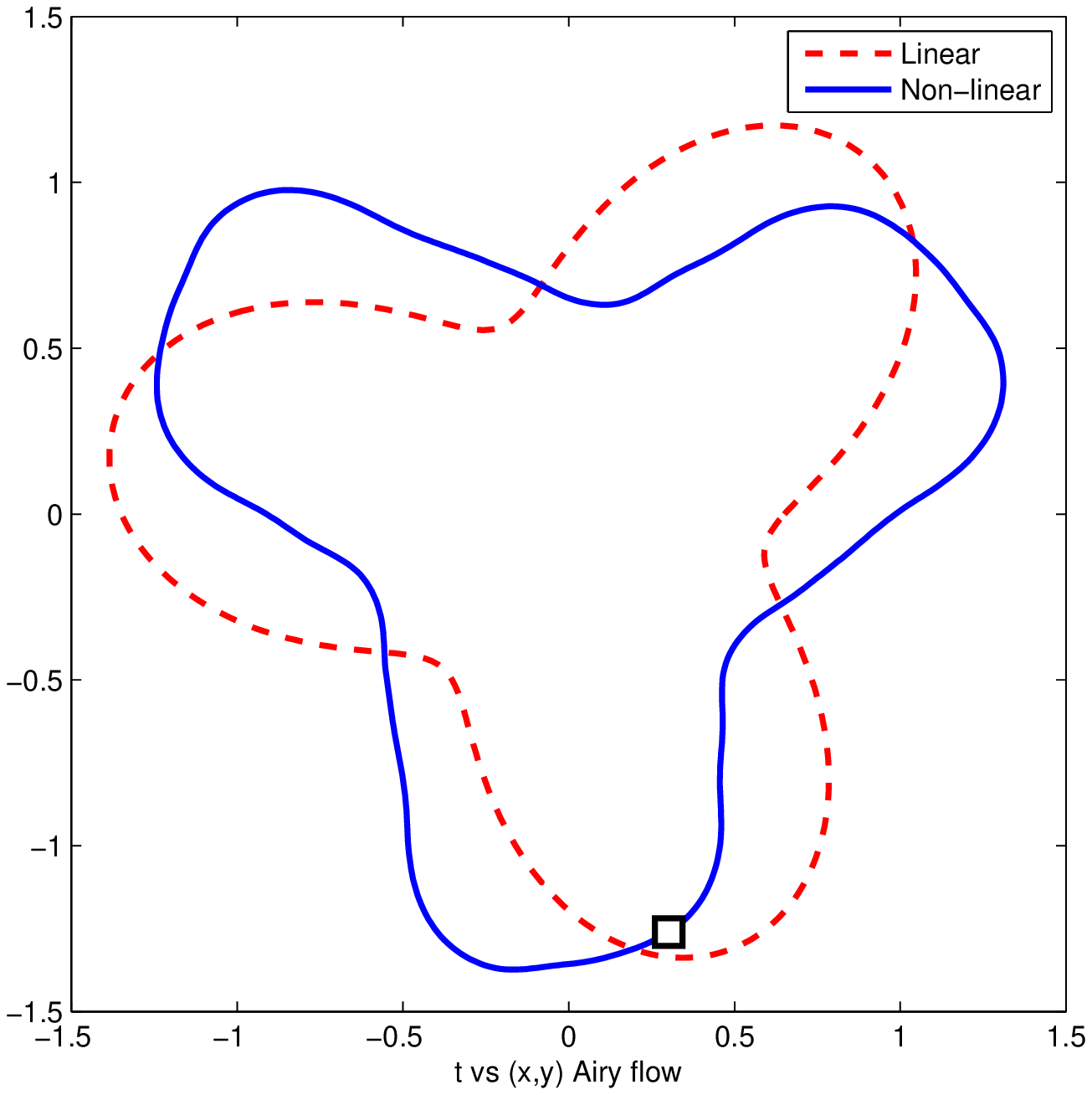}{}}\hfill
      \caption{T=1} \label{fig:Pcp4xy1}
 \end{subfigure}
    \begin{subfigure}[b]{.22\textwidth}
  \hbox{\includegraphics[width=\linewidth]{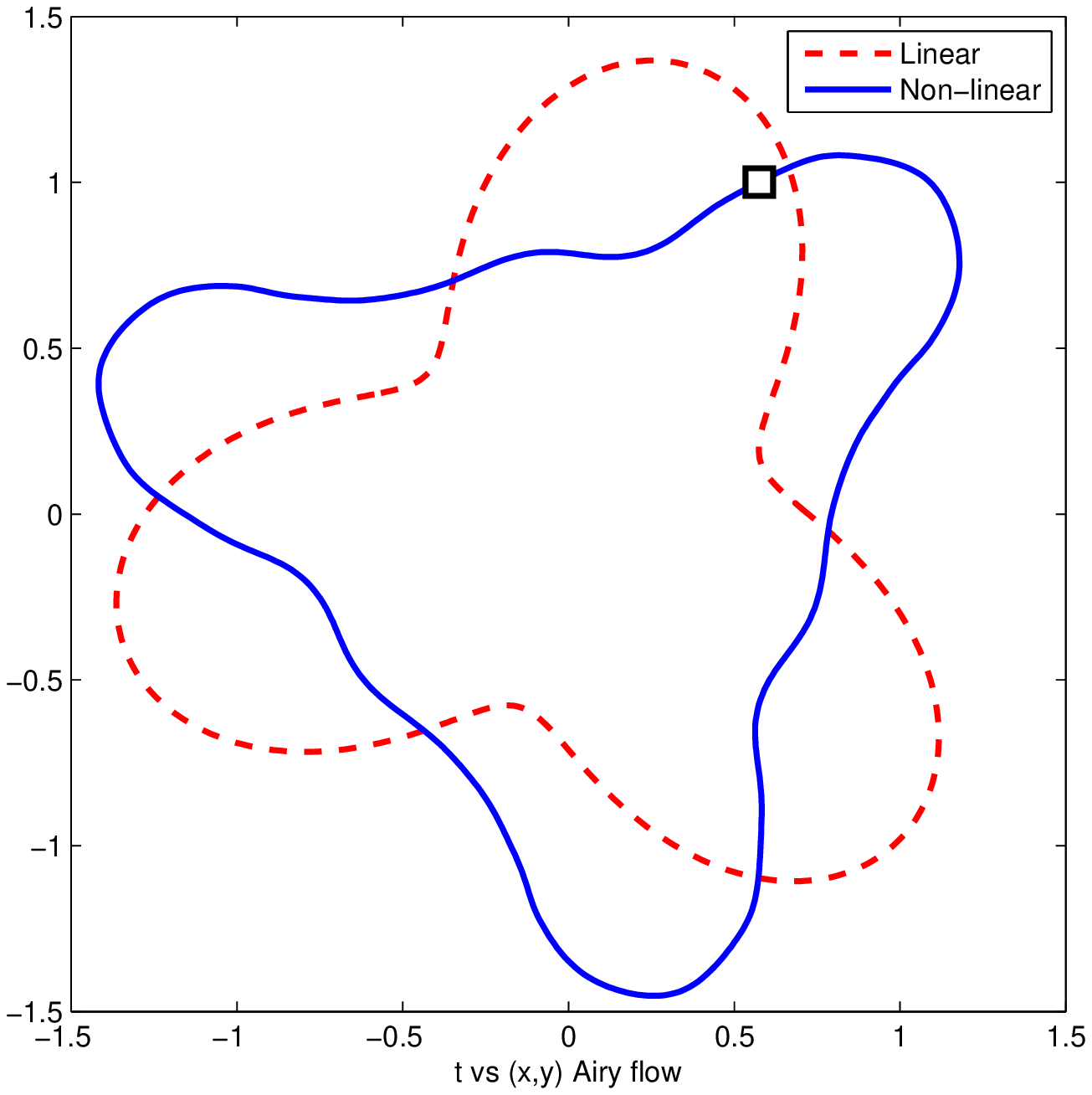}{}}\hfill
      \caption{T=1.5}\label{fig:PCp4xy1p5}
 \end{subfigure}
    \begin{subfigure}[b]{.22\textwidth}
  \hbox{\includegraphics[width=\linewidth]{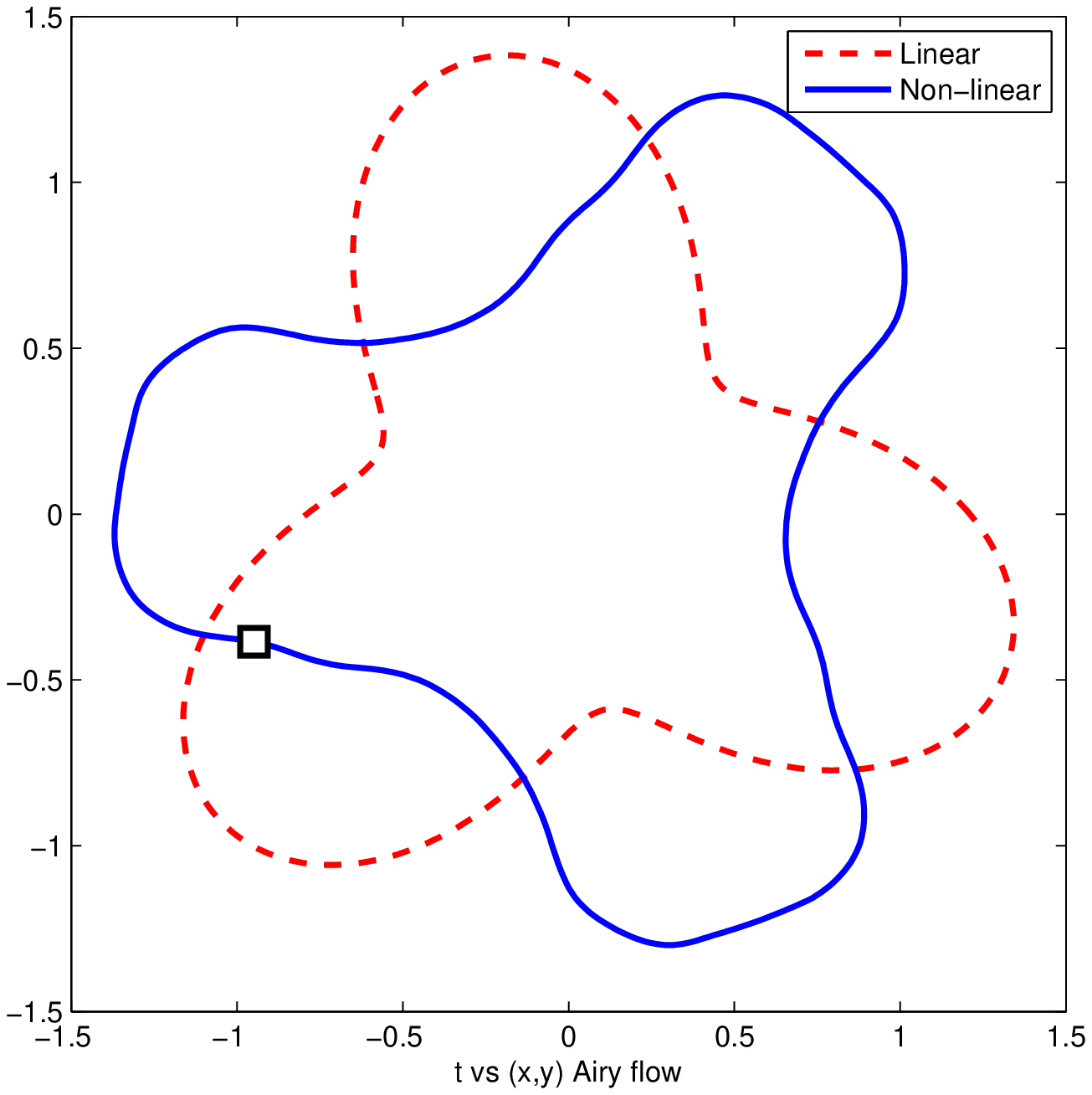}{}}\hfill
      \caption{T=2}\label{fig:PCp4xy2}
 \end{subfigure}
     \begin{subfigure}[b]{.22\textwidth}
  \hbox{\includegraphics[width=\linewidth]{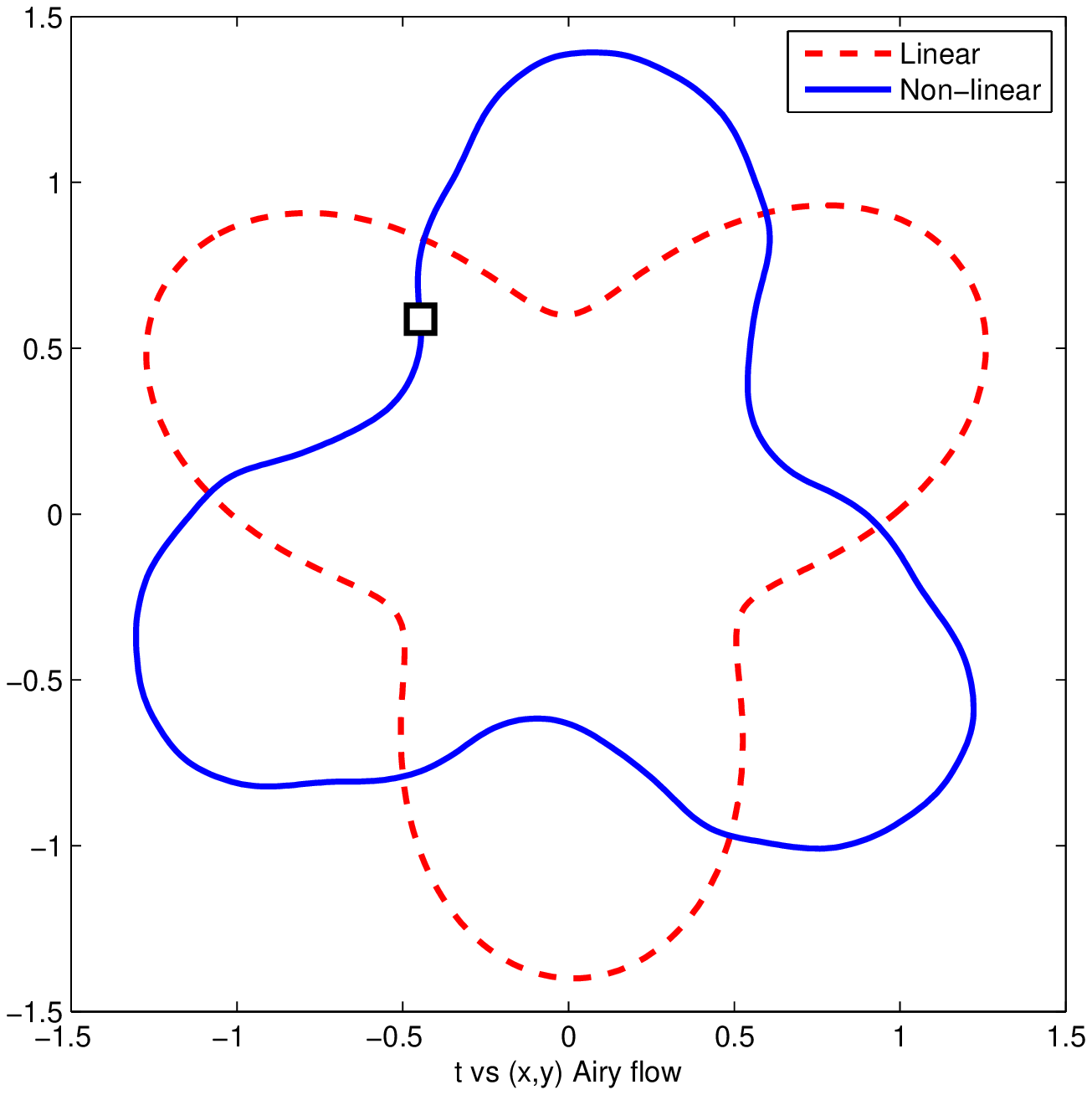}{}}\hfill
      \caption{T=3} \label{fig:PCp4xy3}
 \end{subfigure}
    \begin{subfigure}[b]{.22\textwidth}
  \hbox{\includegraphics[width=\linewidth]{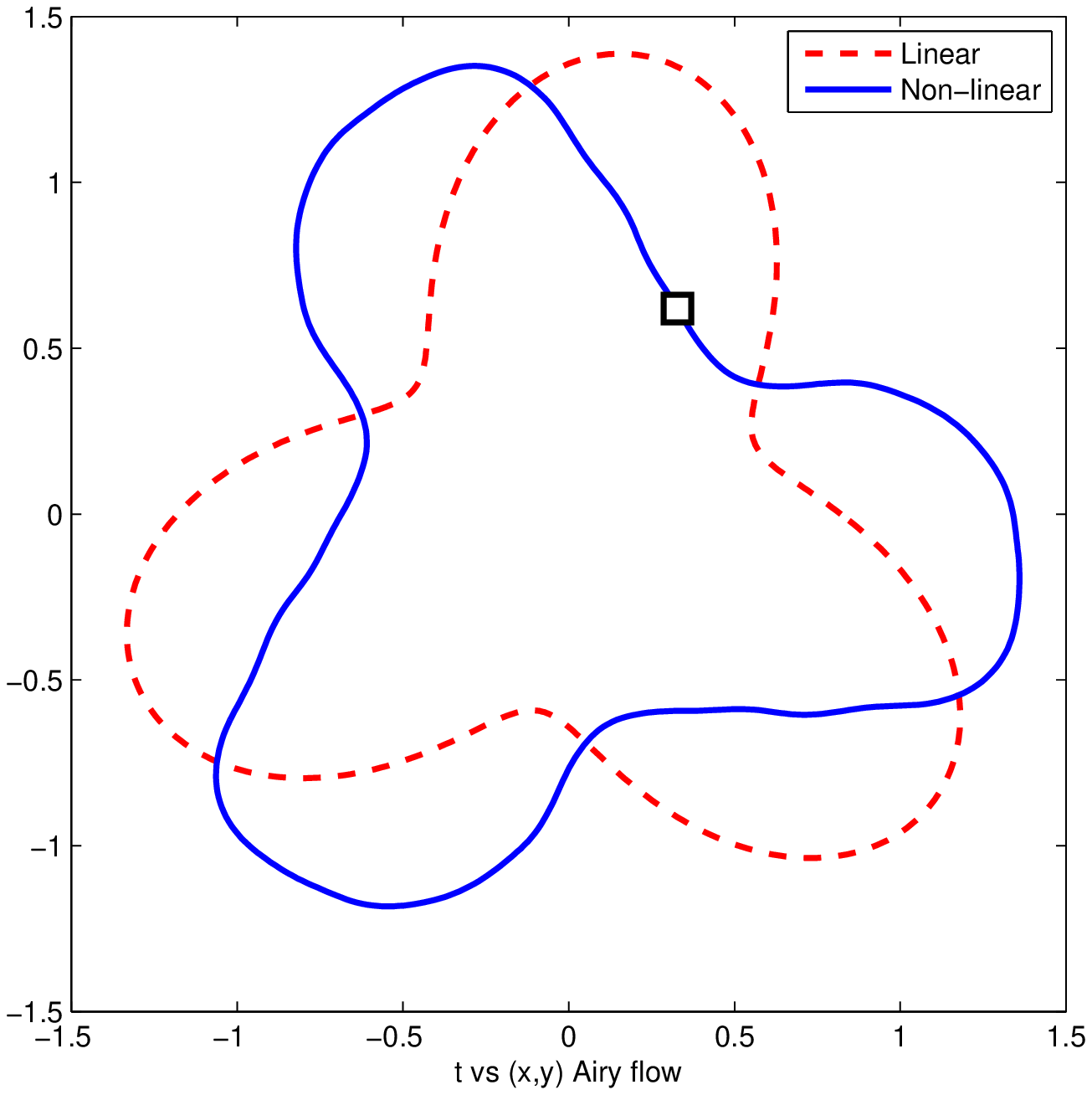}{}}\hfill
      \caption{T=4}\label{fig:PCp4xy4}
 \end{subfigure}
    \begin{subfigure}[b]{.22\textwidth}
  \hbox{\includegraphics[width=\linewidth]{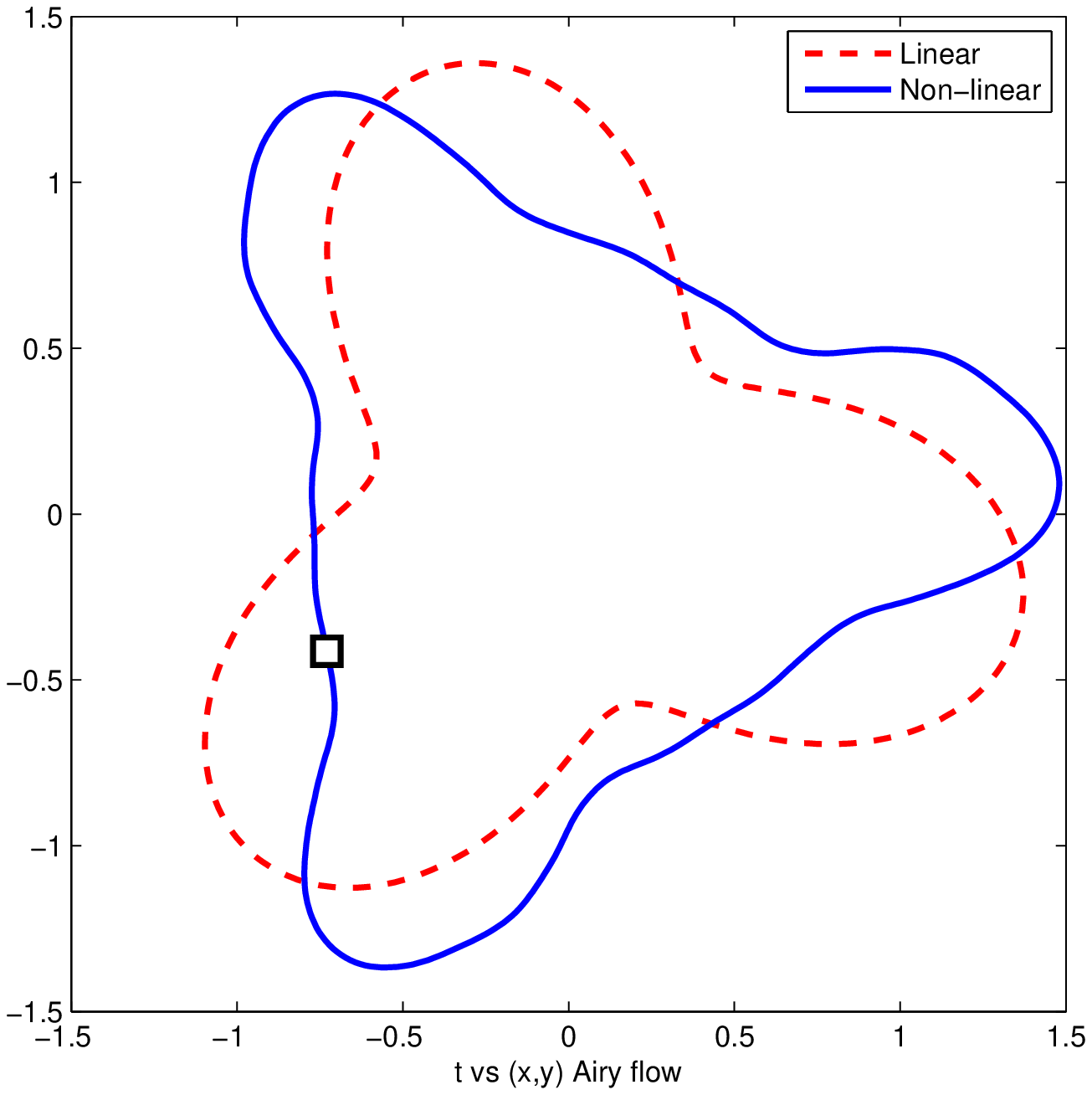}{}}\hfill
      \caption{T=4.5}\label{fig:PCp4xy4p5}
 \end{subfigure}
    \begin{subfigure}[b]{.22\textwidth}
  \hbox{\includegraphics[width=\linewidth]{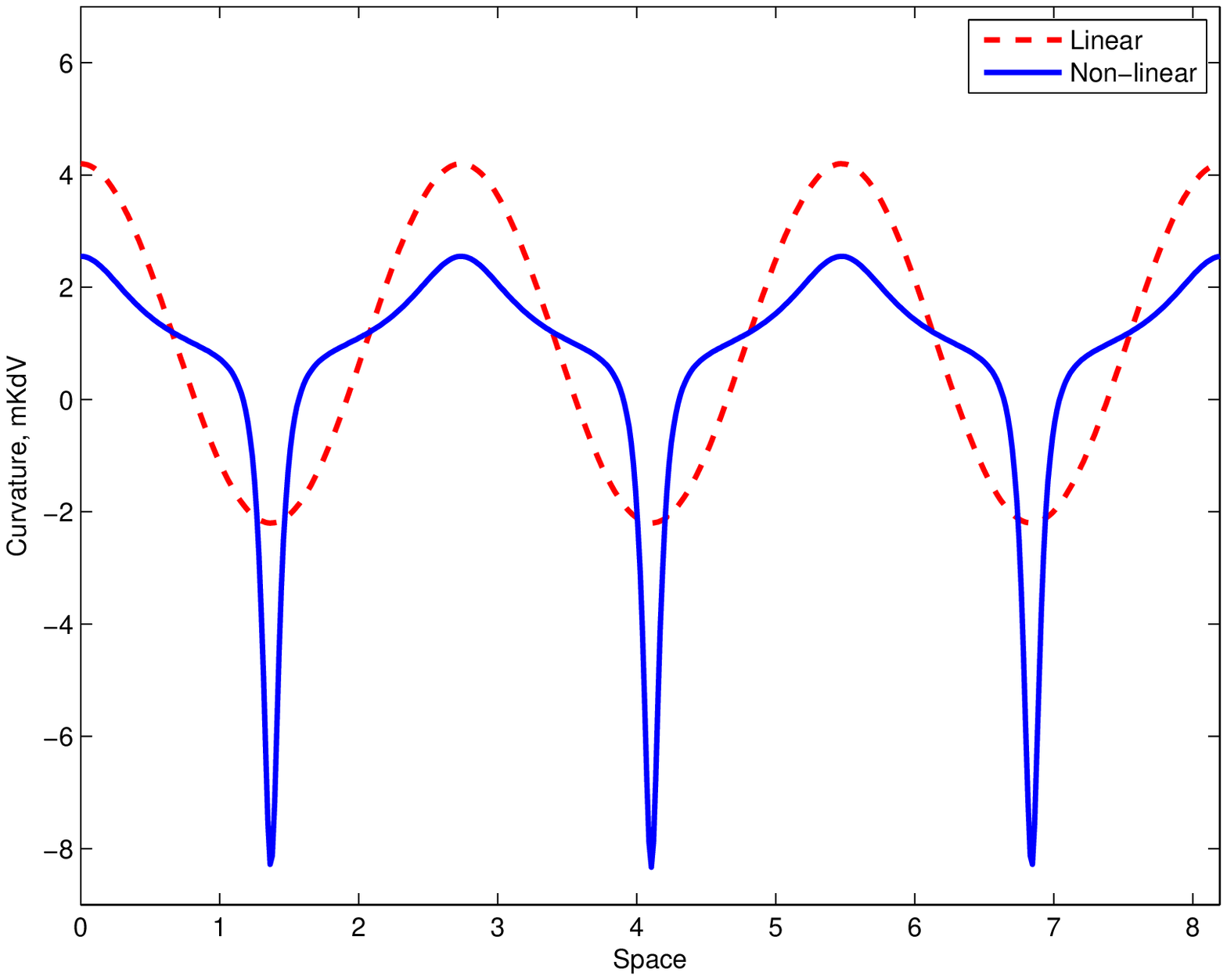}{}}\hfill
      \caption{T=0}\label{fig:PCp4k0}
 \end{subfigure}
    \begin{subfigure}[b]{.22\textwidth}
  \hbox{\includegraphics[width=\linewidth]{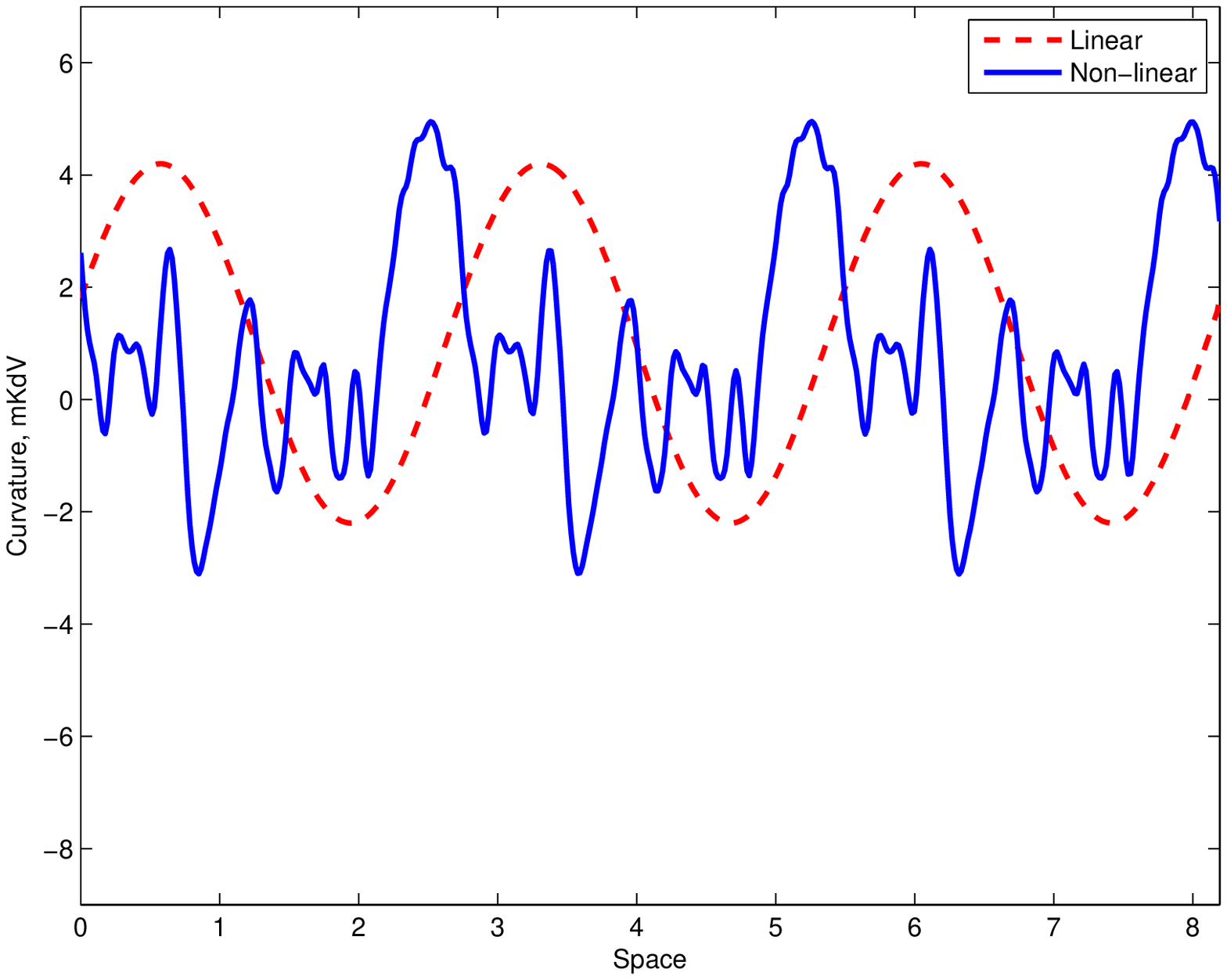}{}}\hfill
      \caption{T=.5}\label{fig:PCp4kp5}
 \end{subfigure}
    \begin{subfigure}[b]{.22\textwidth}
  \hbox{\includegraphics[width=\linewidth]{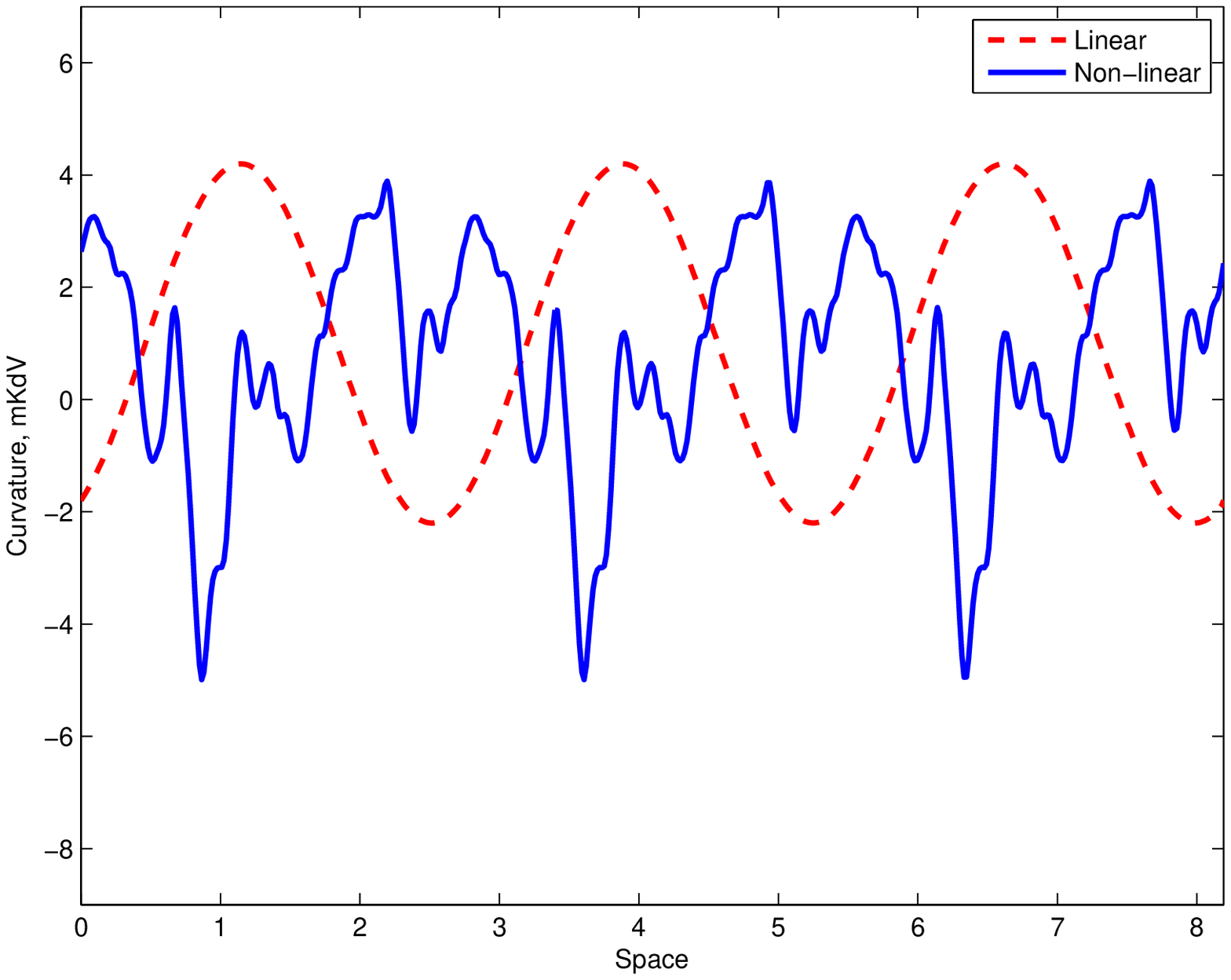}{}}\hfill
      \caption{T=1} \label{fig:PCp4k1}
 \end{subfigure}
    \begin{subfigure}[b]{.22\textwidth}
  \hbox{\includegraphics[width=\linewidth]{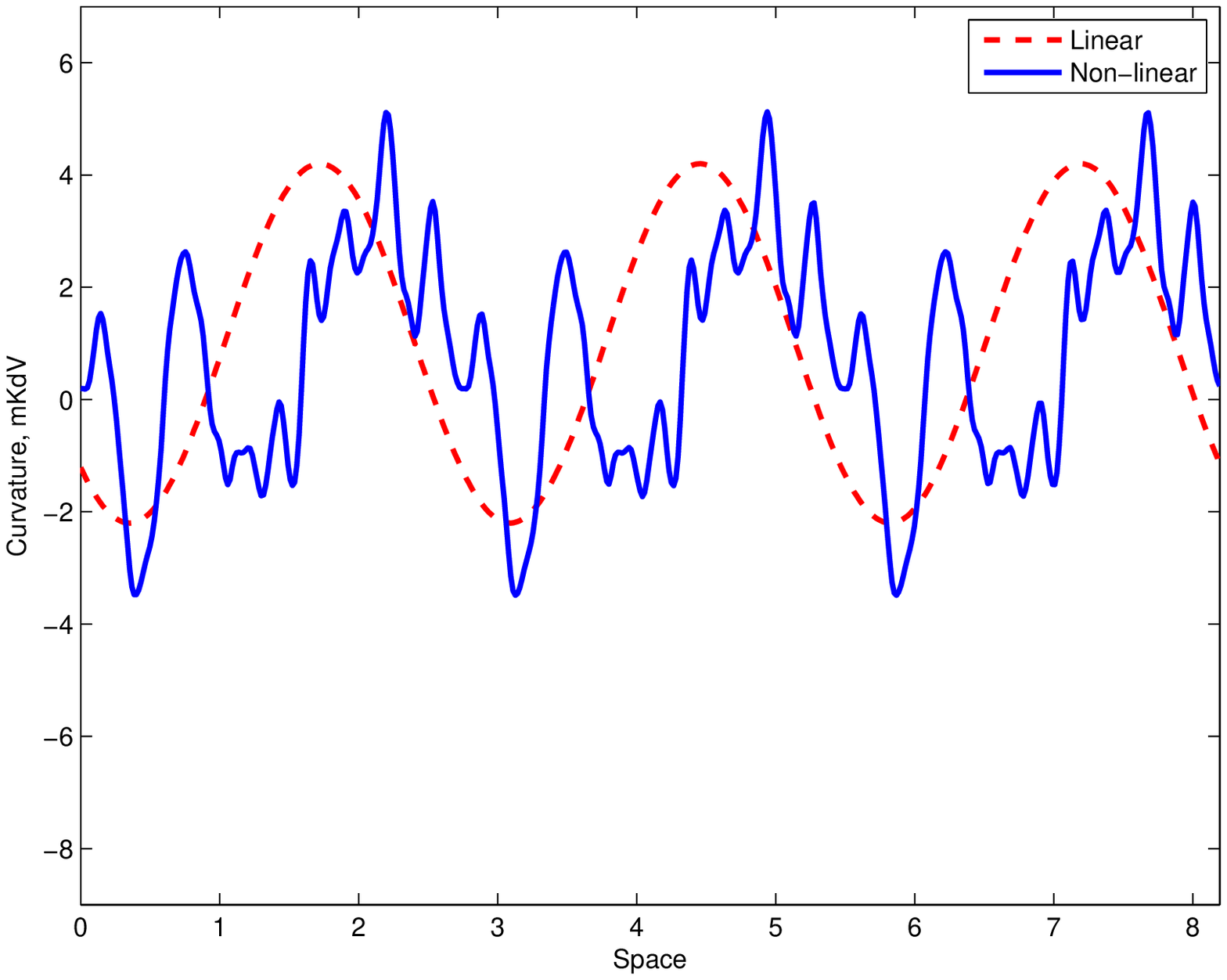}{}}\hfill
      \caption{T=1.5}\label{fig:PCp4k1p5}
 \end{subfigure}
    \begin{subfigure}[b]{.22\textwidth}
  \hbox{\includegraphics[width=\linewidth]{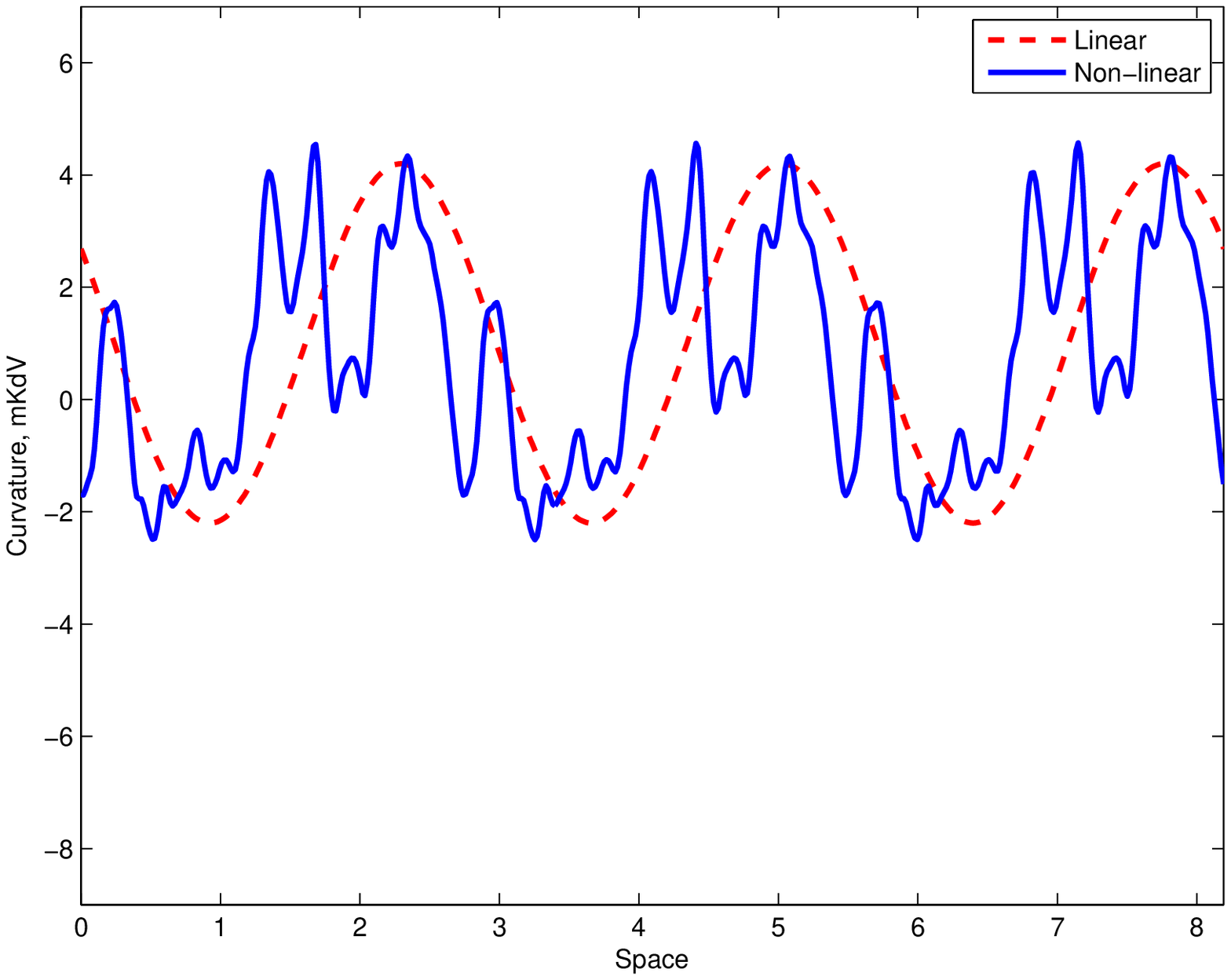}{}}\hfill
      \caption{T=2}\label{fig:PCp4k2}
 \end{subfigure}
    \begin{subfigure}[b]{.22\textwidth}
  \hbox{\includegraphics[width=\linewidth]{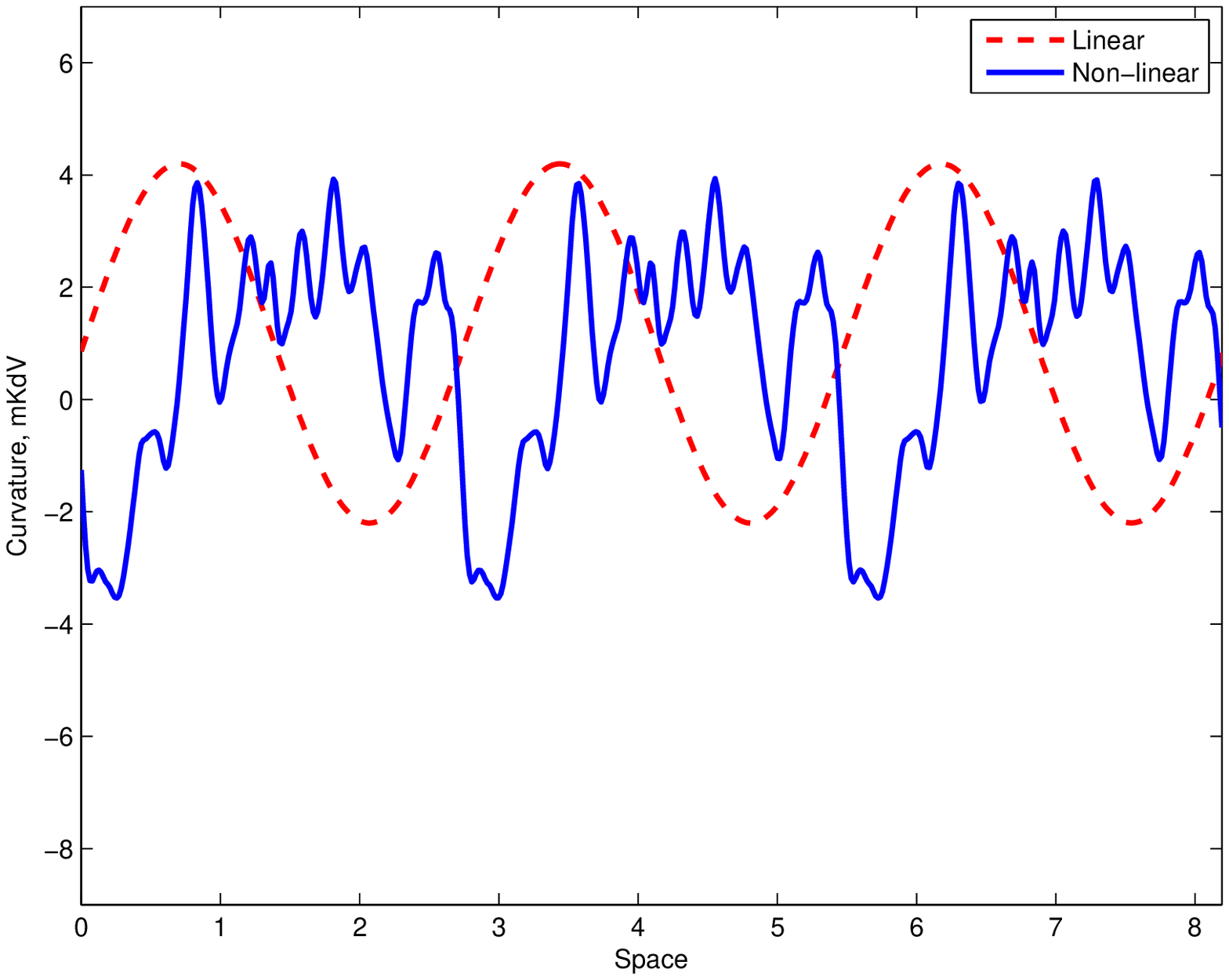}{}}\hfill
      \caption{T=3} \label{fig:PCp4k3}
 \end{subfigure}
    \begin{subfigure}[b]{.22\textwidth}
  \hbox{\includegraphics[width=\linewidth]{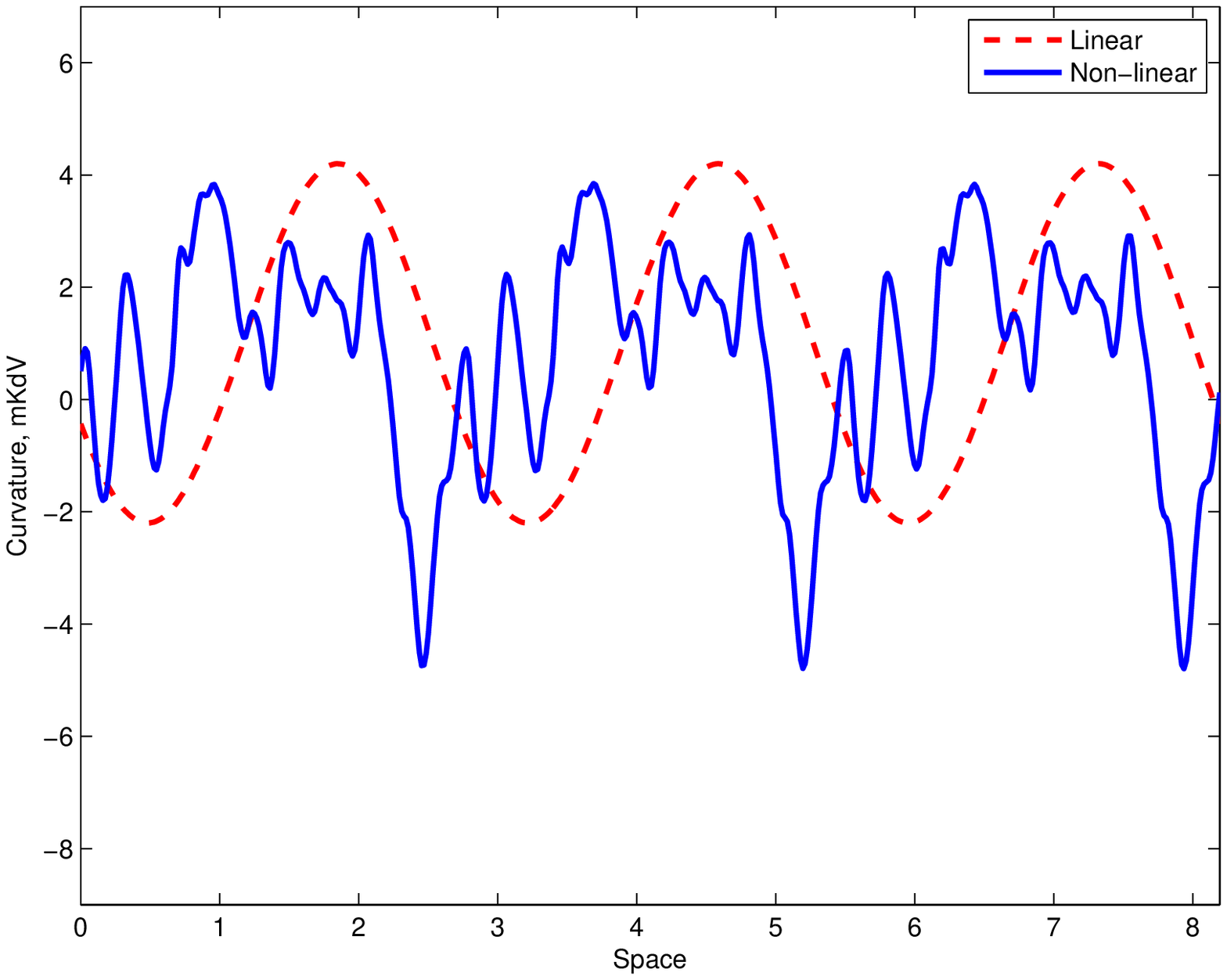}{}}\hfill
      \caption{T=4}\label{fig:PCp4k4}
 \end{subfigure}
    \begin{subfigure}[b]{.22\textwidth}
  \hbox{\includegraphics[width=\linewidth]{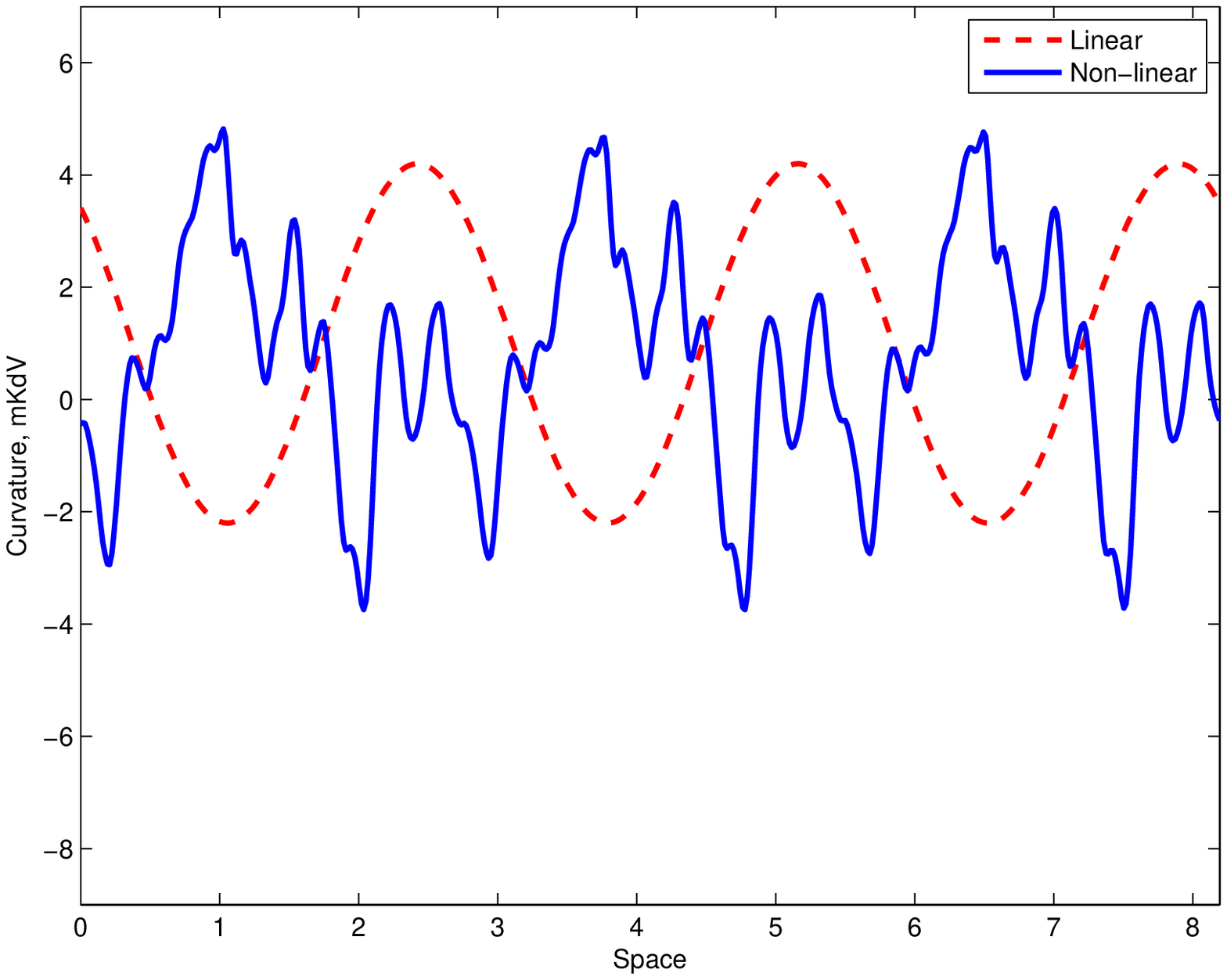}{}}\hfill
      \caption{T=4.5}\label{fig:PCp4k4p5}
 \end{subfigure}
  \caption{Comparison of the linear (red-dashed) and nonlinear (blue solid) interface morphologies $(\ref{fig:PCp4xy0})-(\ref{fig:PCp4xy4p5})$ and curvature $(\ref{fig:PCp4k0})-(\ref{fig:PCp4k4p5})$ for the initial PC curve $(\ref{pcm3})$. The numerical results are generated using the CNADB scheme with $N=512$ points in space and $\Delta t=5\times 10^{-6}$}
\label{PCdyn}
\end{figure} 

It is necessary to take $\Delta t=5\times 10^{-6}$, for the error $Max|\frac{M_{3 (t)}-M_{3(0)}}{M_{3(0)}}|$ to be bounded below $10^{-2}$ with $N=512$ (see figure $\ref{fig:PC}$ (upper)). During this period $(T=4.5)$, a particle at the interface covers more than 3 loops around its center of mass. In tables $(\ref{tconv})$ and $(\ref{sconv})$, we present the convergence analysis for the perturbed circle (PC) using CN, CNADB and ADBDPR schemes that confirm second order accuracy in time and spectral accuracy in space.
From figure $(\ref{PCdyn})$, we can observe that although the nonlinear shape does not rotate without changing shape (as the linear solution does), the overall shape of the nonlinear evolving curve still retains a large contribution from the initial perturbation. To quantify this effect, we calculate the numerical perturbation as before: 

\begin{equation}\label{delshape}
 \widetilde{\delta_N}\approx max_{\alpha}(\sqrt{x^2+y^2}-R_0),
 \end{equation}
where
\begin{equation}
R_0=\sqrt{\frac{Initial \:area}{\pi}}.
\end{equation}

The results are shown in figure $(\ref{fig:PC})$ (bottom). Observe that the perturbation stably oscillates about $\delta_0=.4$ and that the evolution is nearly, but not exactly periodic.
  
\begin{figure}[H]
    \begin{subfigure}[b]{.42\textwidth}
  \hbox{\includegraphics[width=\linewidth]{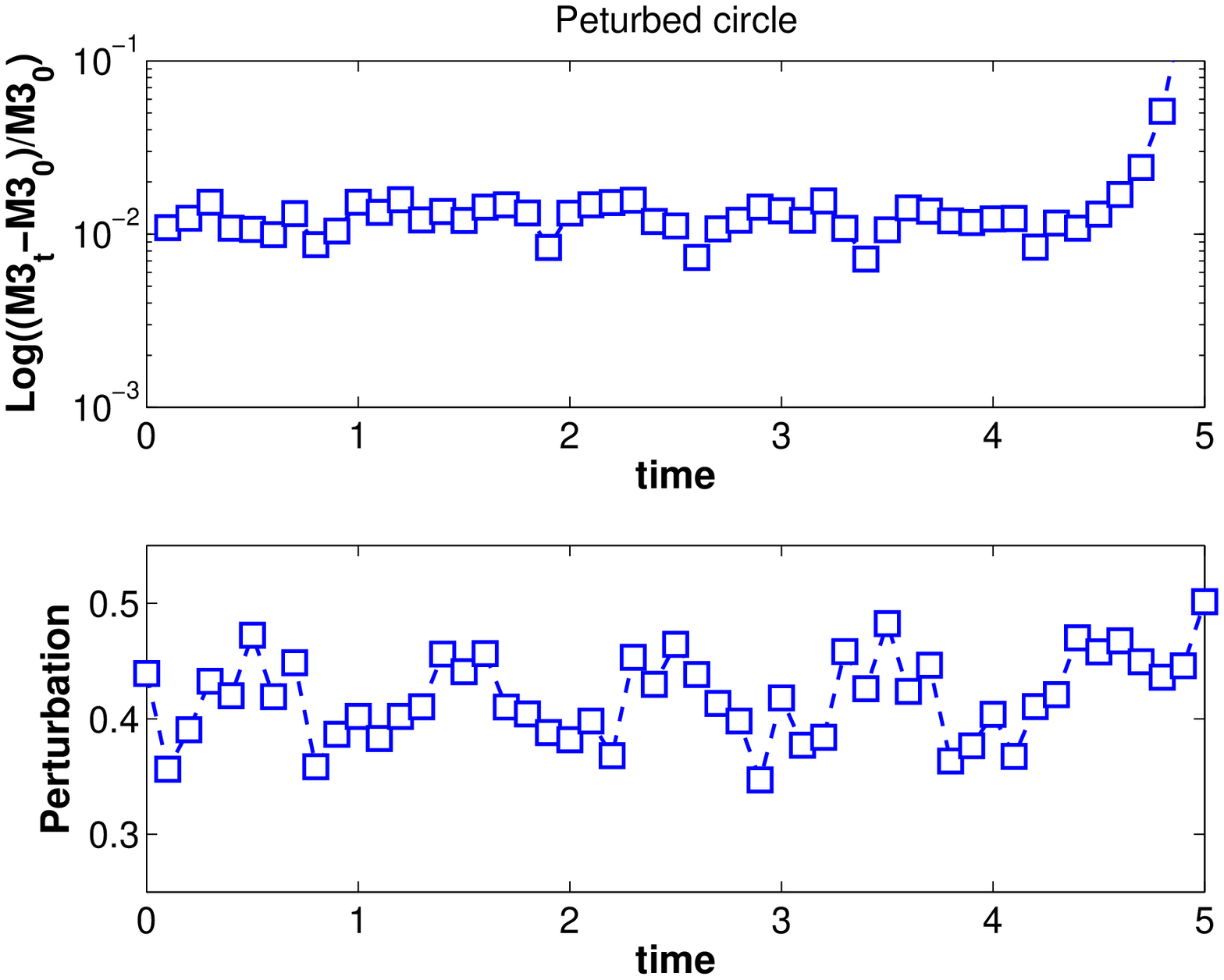}{}}\hfill
  \caption{Perturbed circle, $\Delta t=5\times 10^{-6}$}\label{fig:PC}      
    \end{subfigure}
    \begin{subfigure}[b]{.42\textwidth}
  \hbox{\includegraphics[width=\linewidth]{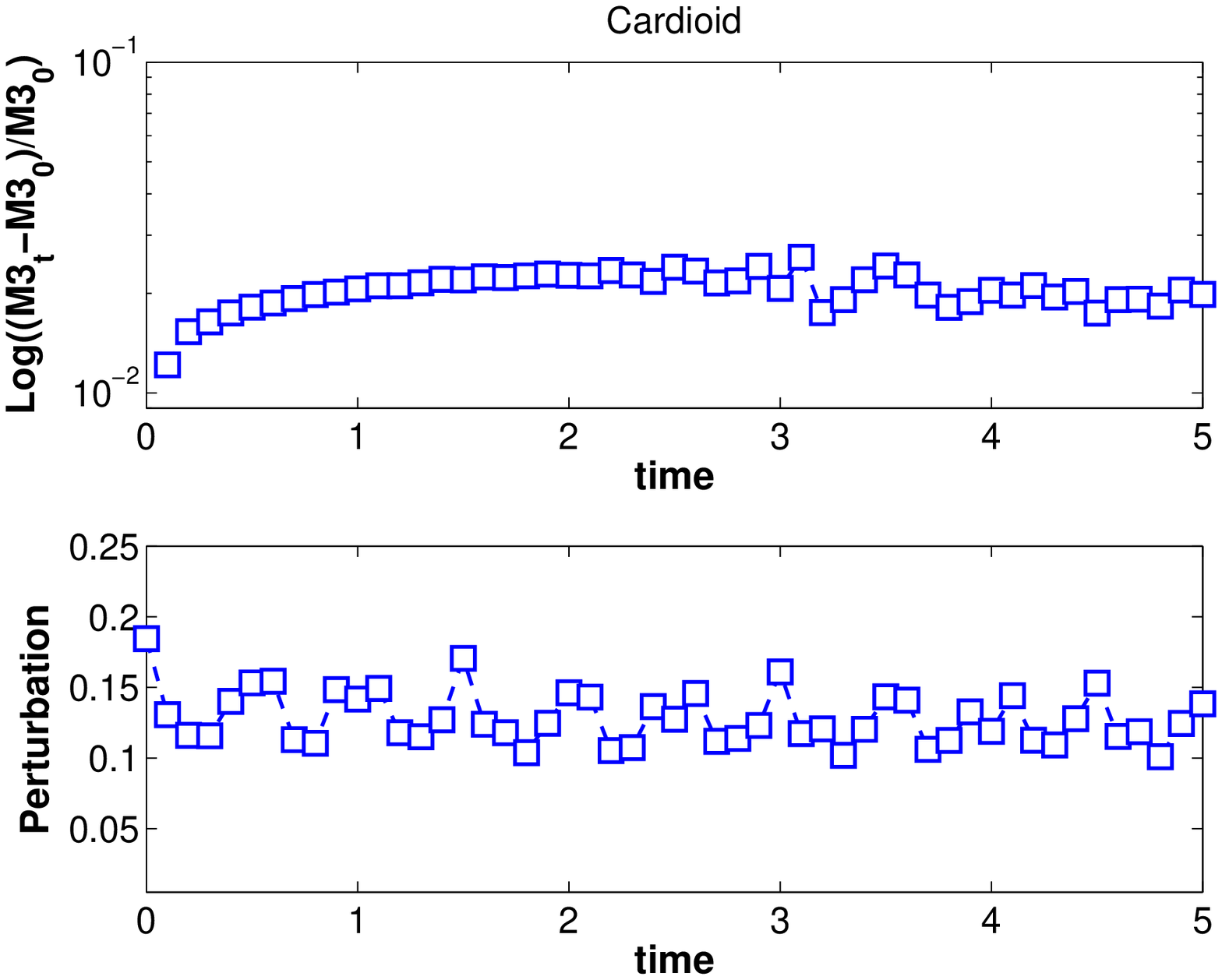}{}}\hfill
  \caption{Cardioid, $\Delta t=1\times 10^{-5}$}\label{fig:Car}
    \end{subfigure}
  \caption{Evolution of the relative error in $M_3$ (top) and nonlinear perturbation magnitude $\widetilde{\delta_N}$ (bottom) over time for the PC $(a)$ and the cardioid $(b)$ initial curves. These results were generated using CNADB with $N=512$ and $\Delta t=5\times 10^{-6}$ for (a) and $\Delta t=1\times 10^{-5}$ for (b)}
\label{fig:PCCar}
\end{figure}

%****************************************************************************************************  
\subsubsection{Dynamics for a cardioid curve.}\label{cardioidcurve} 
As a final example, we use a cardioid as an initial curve:
\begin{equation}\label{carcurve}
(x(\alpha,0),y(\alpha,0))=(\cos (\alpha)+.35 \sin (2 \alpha),\sin(\alpha)+.7\sin(\alpha)^2),\alpha \in[0,2\pi].
\end{equation}

This choice is motivated by the study of \cite{Hsiao} who considered the same curve and its dynamics.
The curve evolution is shown in figure $(\ref{fig:CCNADBxy0})$-$(\ref{fig:CCNADBxy5})$, and the corresponding curvature in figure $(\ref{fig:CCNADBxy0})$-$(\ref{fig:CCNADBxy5})$. The high curvature of the bottom of the cardioid rapidly generates dispersive waves that reduce the curvature and travel around the interface. On the time period $T=5$, a particle at the interface moves about $3/4$ of a loop around its center of mass. The evolution of the curve and curvature are smooth and as such is in distinct contrast with that presented in (\cite{Hsiao}), where the evolution was found to be irregular and the morphologies of the evolving curve were not smooth. To test the accuracy of our results, we present evidence for convergence of the method ($2^{nd}$ order accurate in time and spectral accuracy in space) in tables $(\ref{tconv}),(\ref{sconv})$. We also calculate the relative error in figure $(\ref{fig:Car})$ (upper) and find that it evolves stably and is less than $2\times 10^{-2}$ over the whole time interval ($0\leq T\leq 5$). The evolution of the nonlinear perturbation $(\ref{fig:Car})$ (bottom) is also stable, and looks to be nearly periodic. Thus, we conclude that our simulation, unlike that presented in (\cite{Hsiao}), is accurate.
\begin{figure}[H]
    \begin{subfigure}[b]{.22\textwidth}
  \hbox{\includegraphics[width=\linewidth]{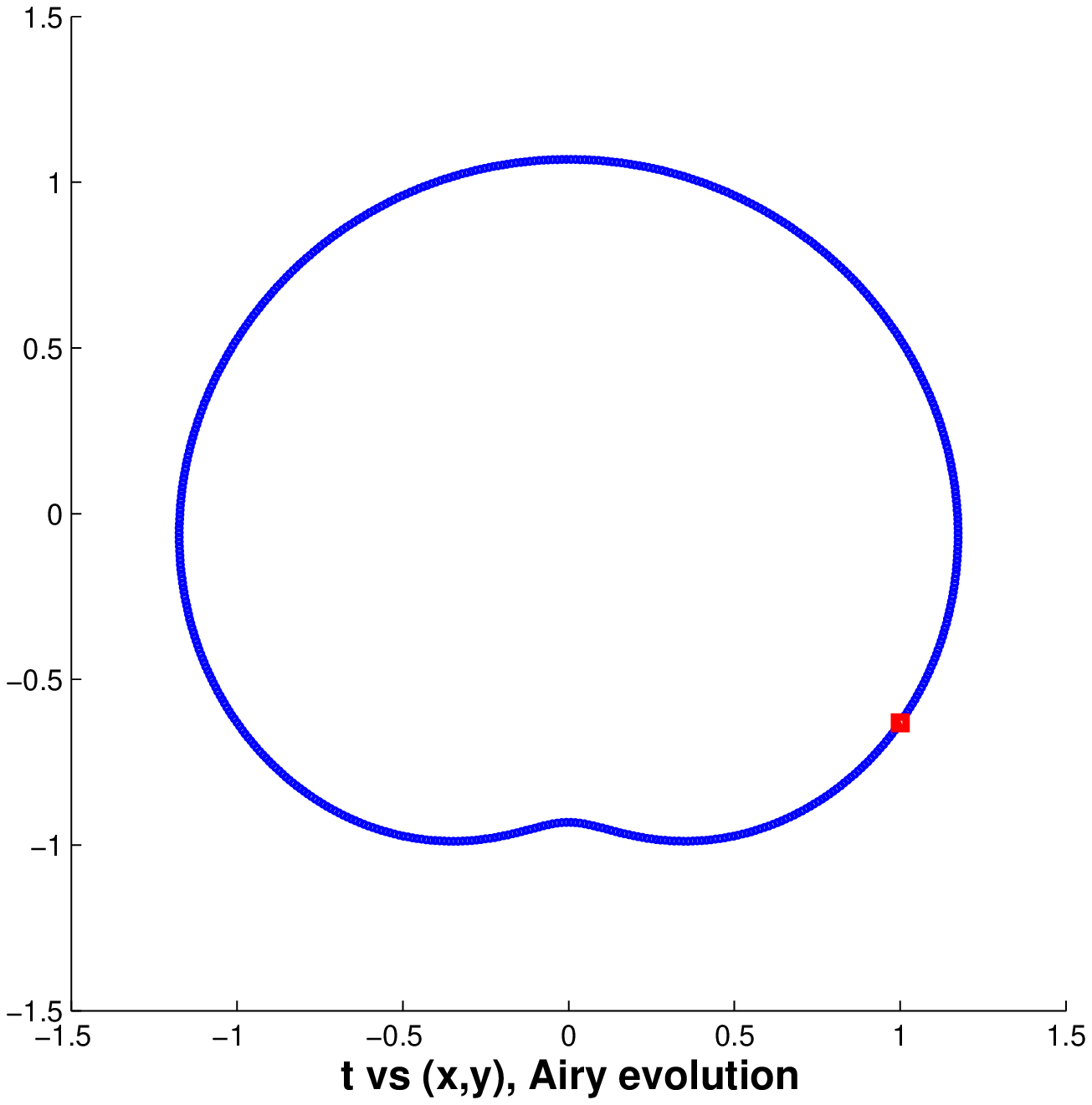}{}}\hfill
      \caption{T=0}\label{fig:CCNADBxy0}
    \end{subfigure}
    \begin{subfigure}[b]{.22\textwidth}
  \hbox{\includegraphics[width=\linewidth]{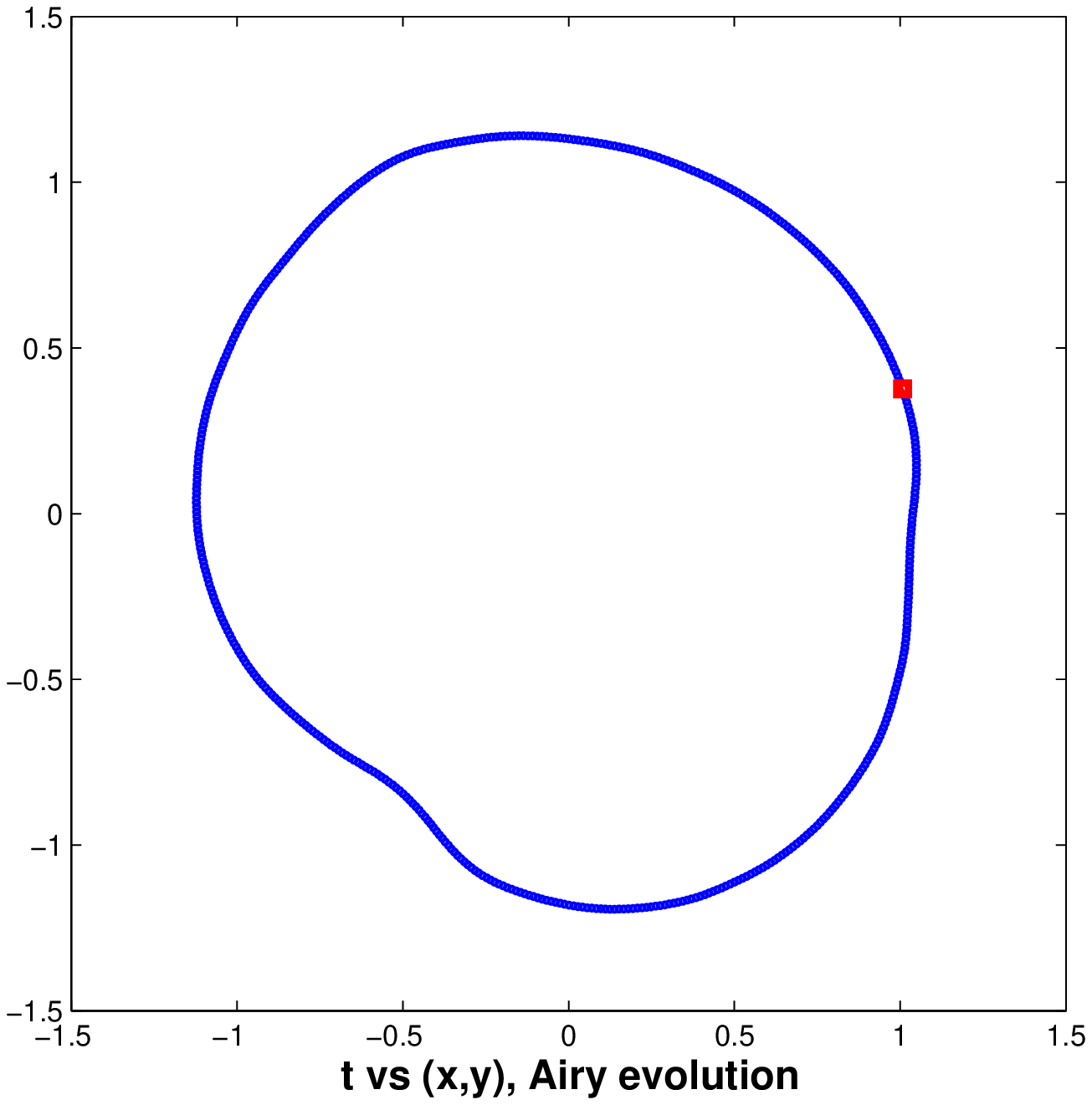}{}}\hfill
      \caption{T=1}\label{fig:CCNADBxy1}
    \end{subfigure}
    \begin{subfigure}[b]{.22\textwidth}
  \hbox{\includegraphics[width=\linewidth]{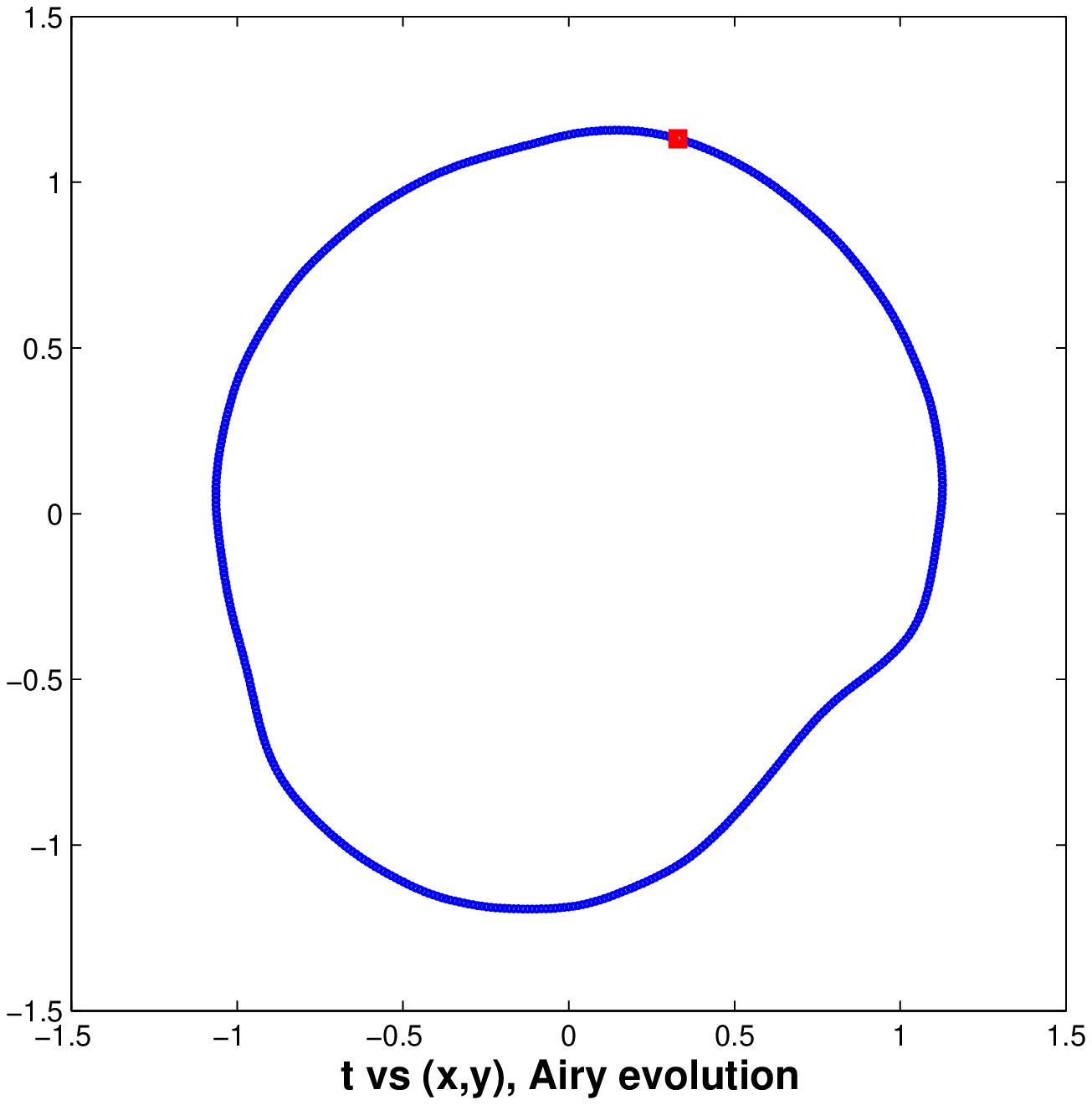}{}}\hfill
      \caption{T=2} \label{fig:CCNADBxy2}
    \end{subfigure}
    \begin{subfigure}[b]{.22\textwidth}
  \hbox{\includegraphics[width=\linewidth]{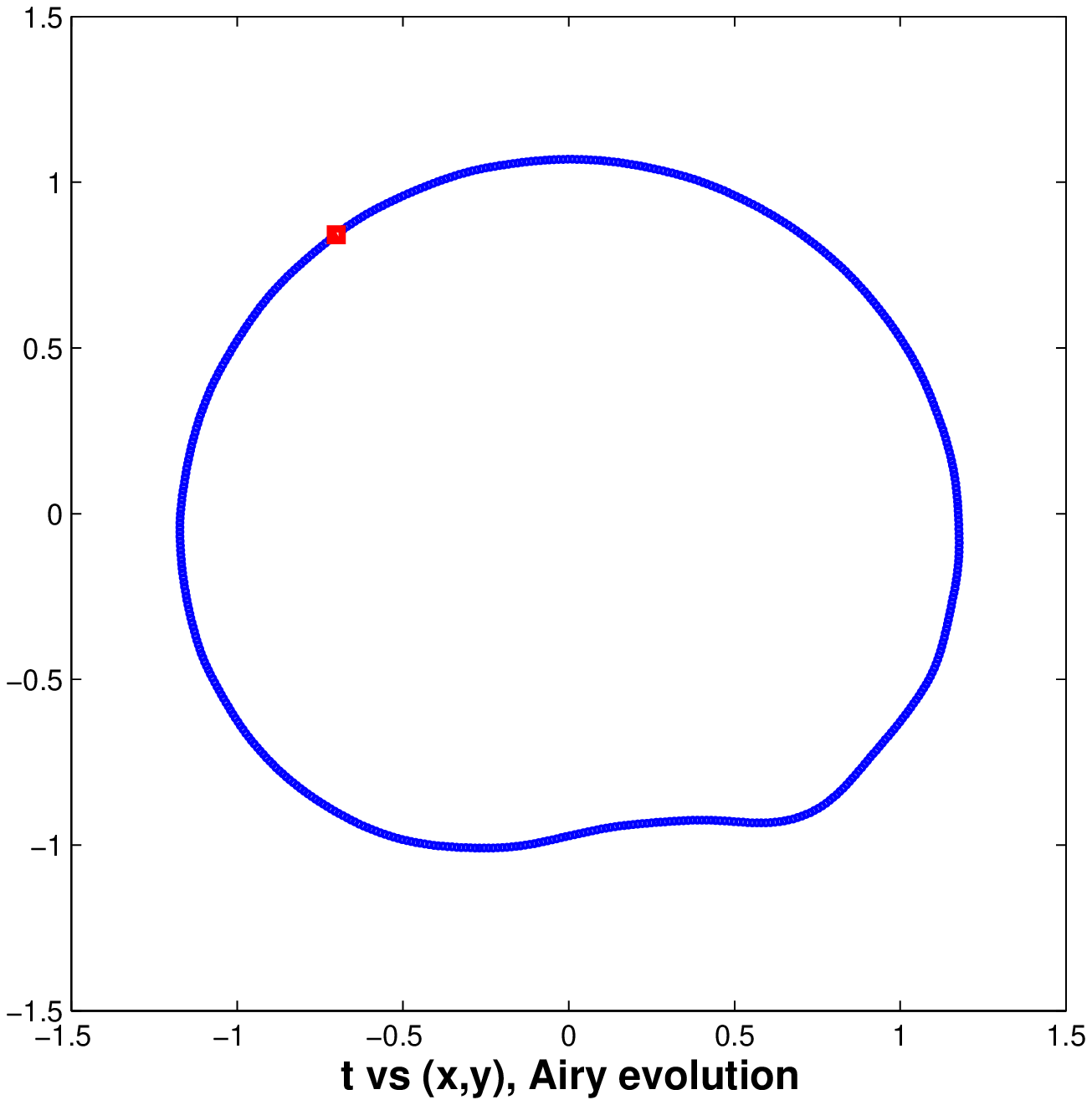}{}}\hfill
      \caption{T=3}\label{fig:CCNADBxy3}
    \end{subfigure}
    
    \begin{subfigure}[b]{.22\textwidth}
  \hbox{\includegraphics[width=\linewidth]{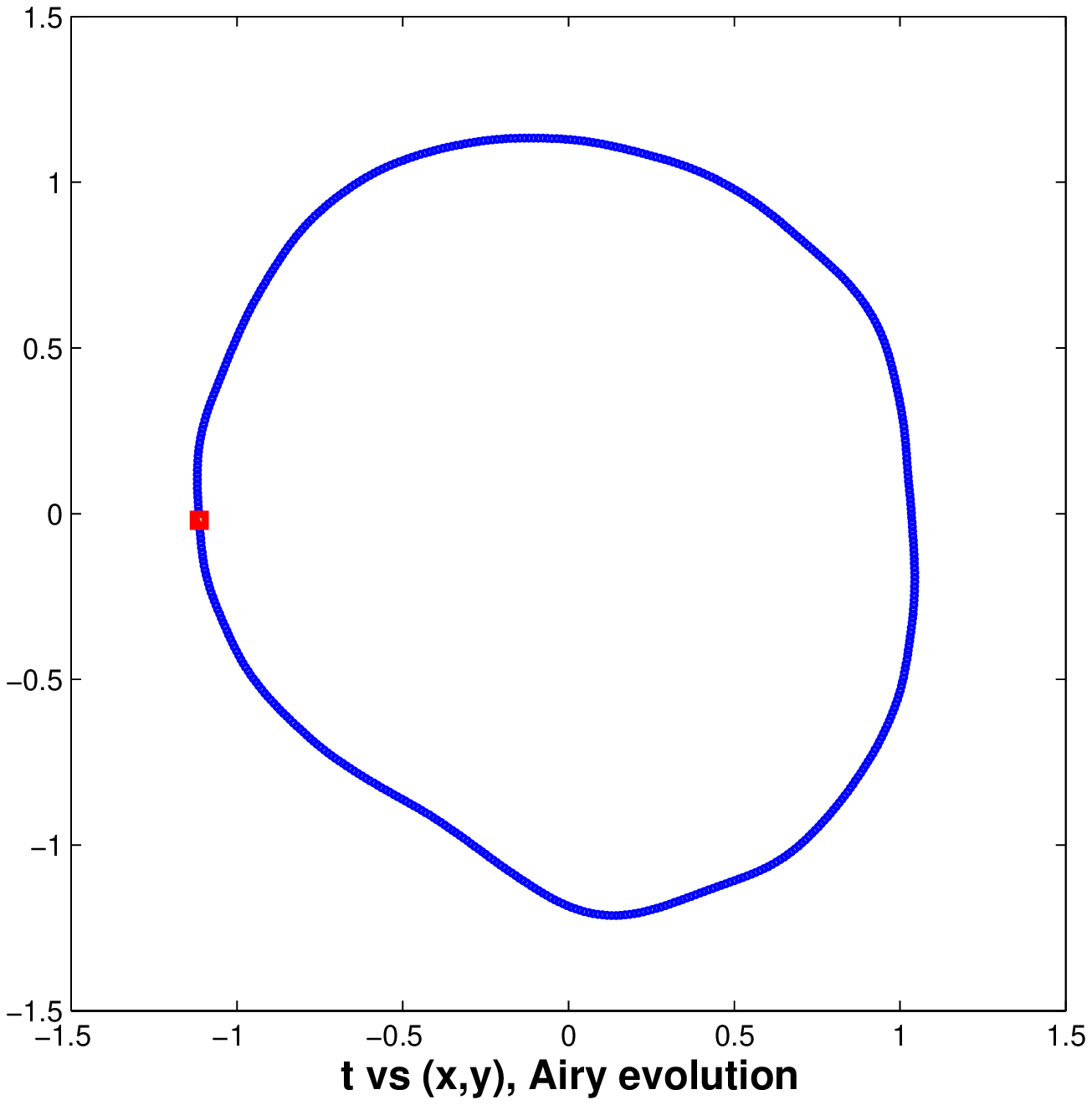}{}}\hfill
      \caption{T=4}\label{fig:CCNADBxy4}
    \end{subfigure}
    \begin{subfigure}[b]{.22\textwidth}
  \hbox{\includegraphics[width=\linewidth]{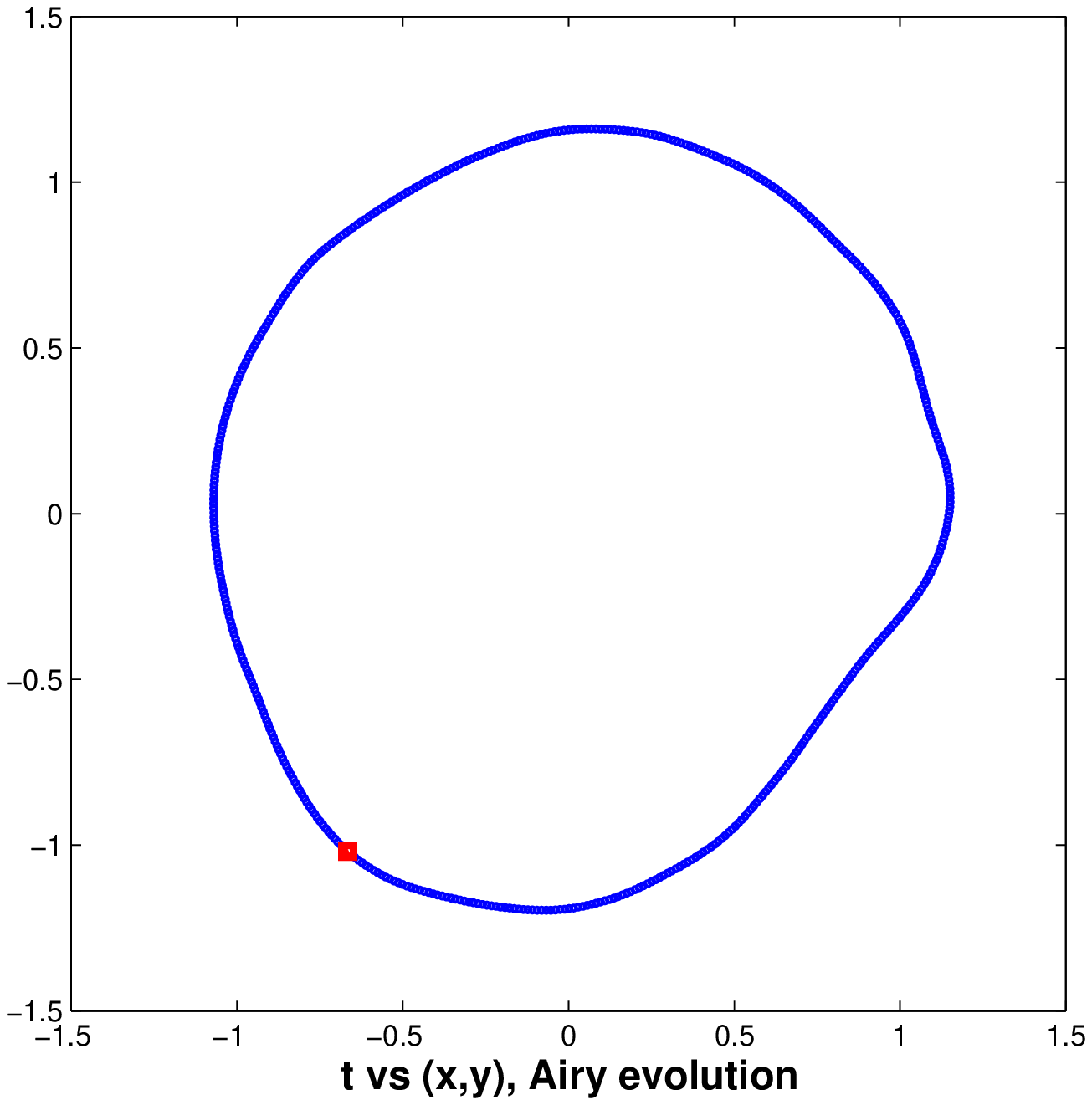}{}}\hfill
      \caption{T=5} \label{fig:CCNADBxy5}      
    \end{subfigure}
    \begin{subfigure}[b]{.22\textwidth}
  \hbox{\includegraphics[width=\linewidth]{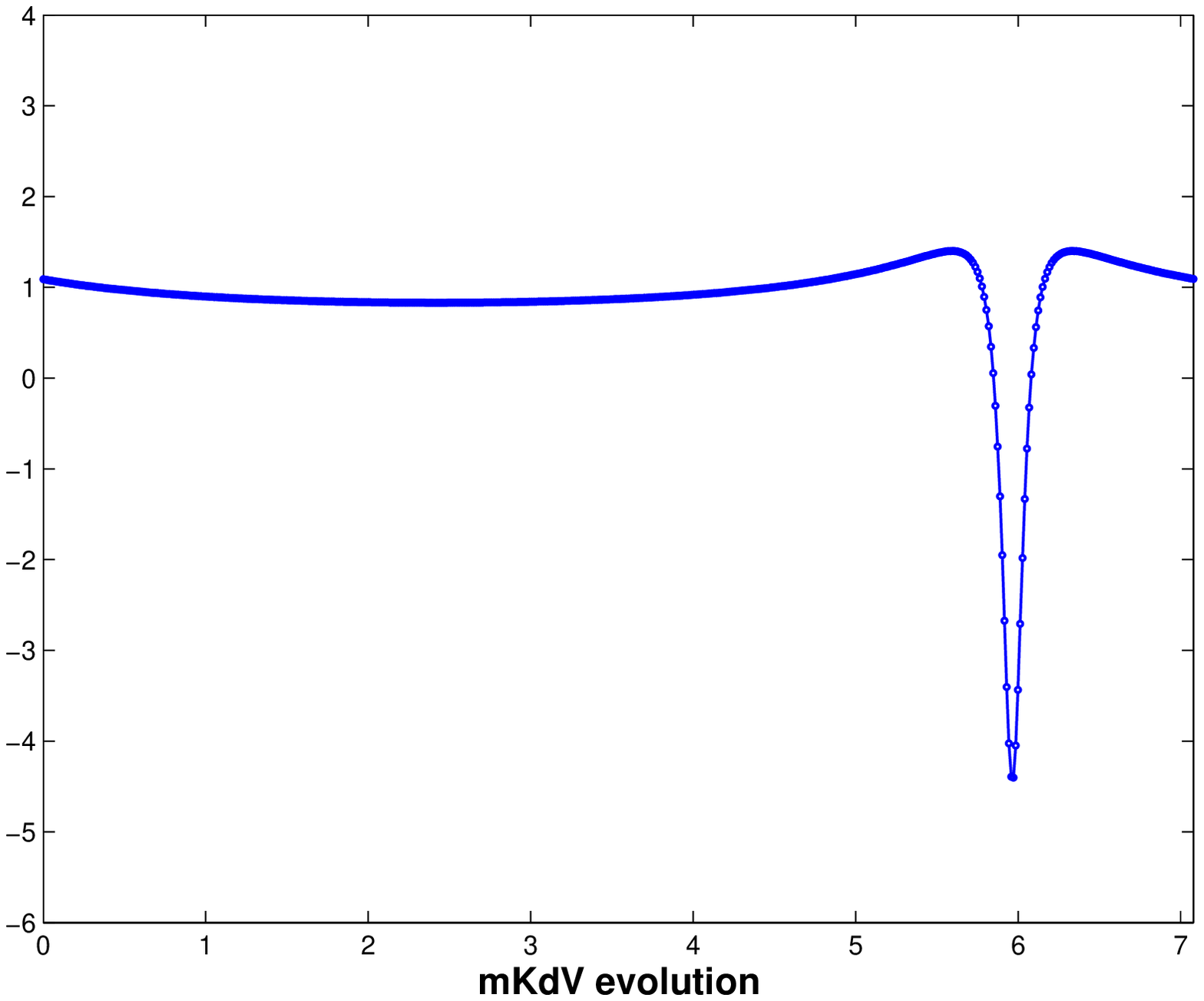}{}}\hfill
      \caption{T=0}\label{fig:CCNADBk0}
    \end{subfigure}
    \begin{subfigure}[b]{.22\textwidth}
  \hbox{\includegraphics[width=\linewidth]{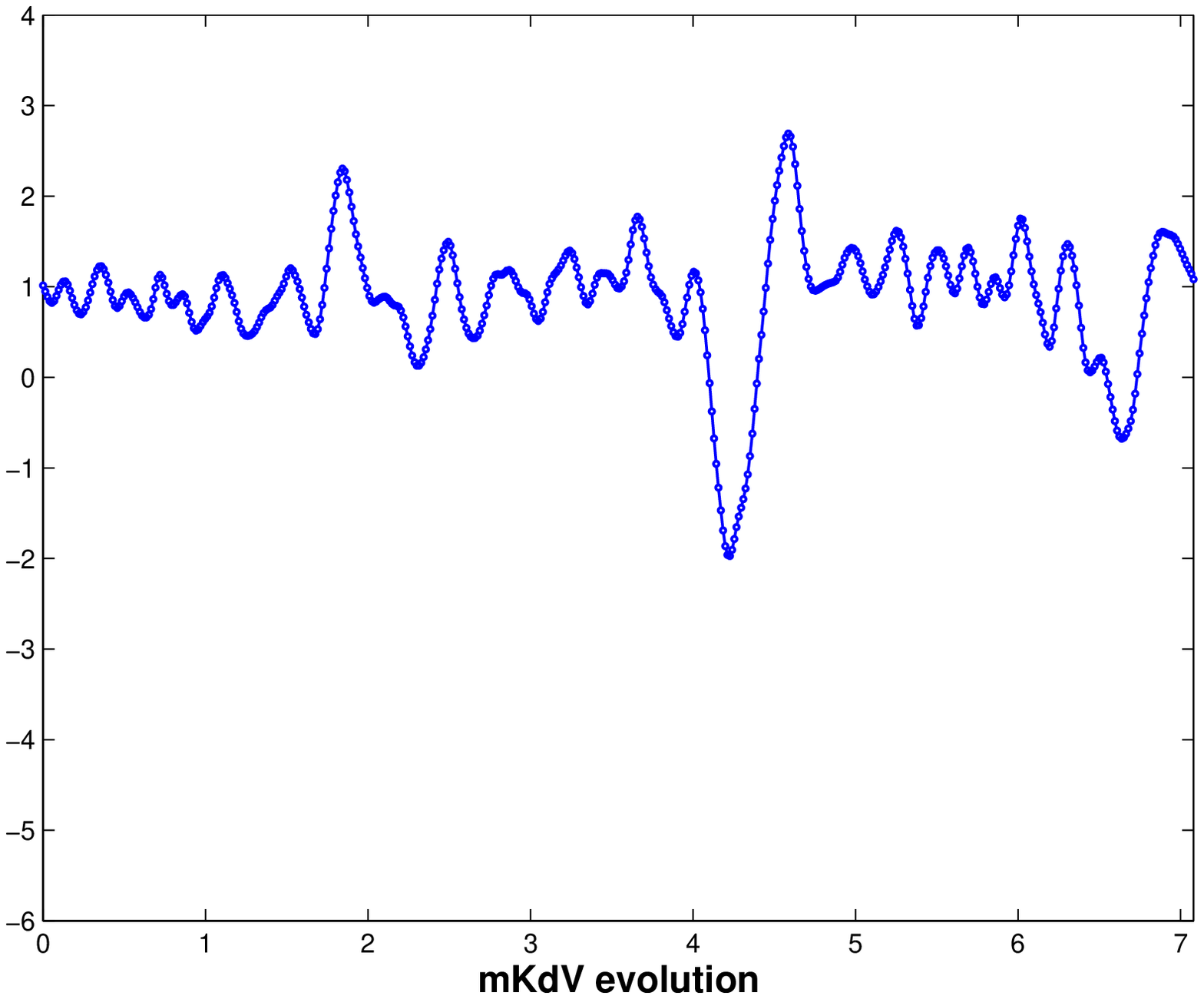}{}}\hfill
      \caption{T=1}\label{fig:CCNADBk1}
    \end{subfigure}
    
    \begin{subfigure}[b]{.22\textwidth}
  \hbox{\includegraphics[width=\linewidth]{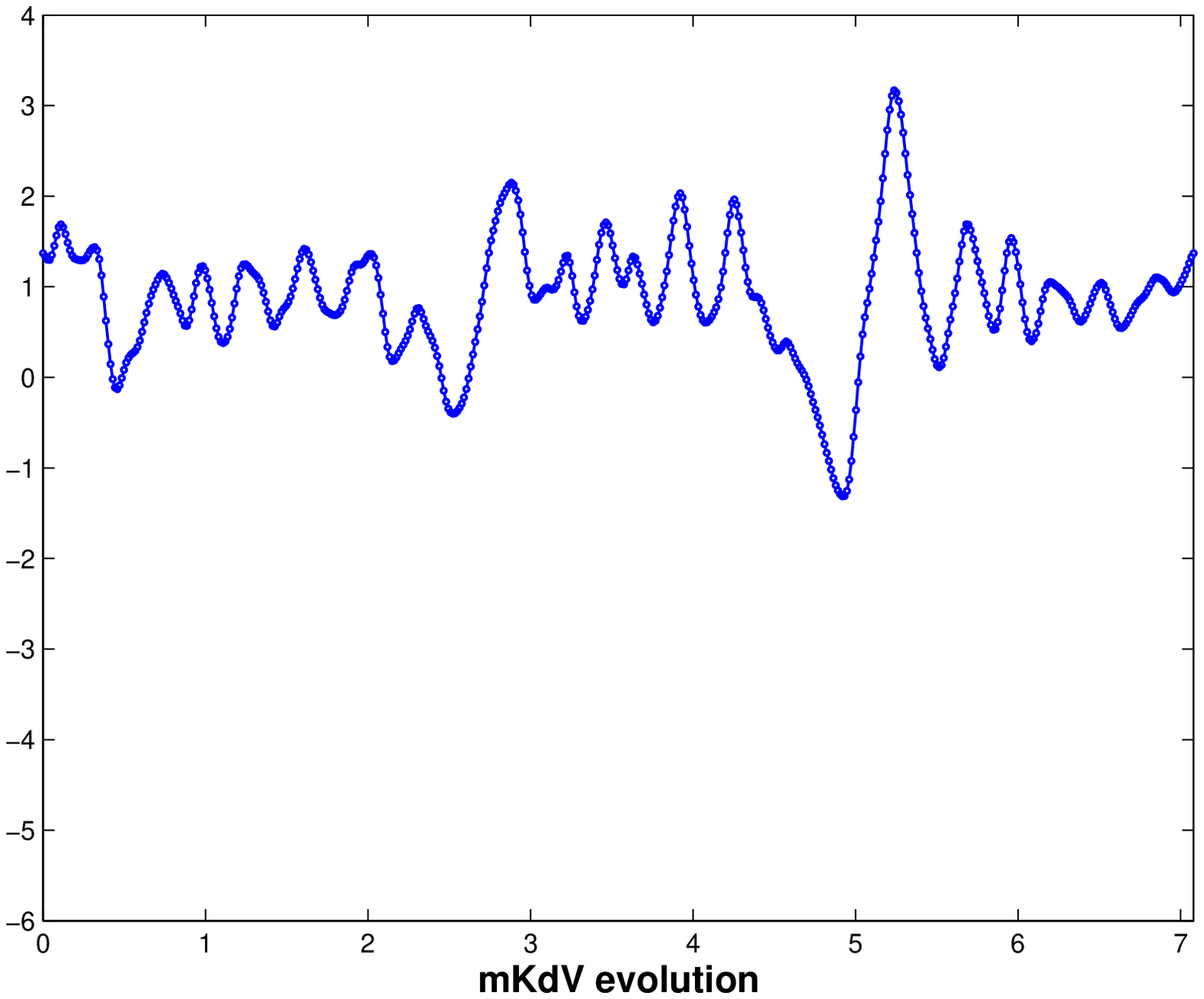}{}}\hfill
      \caption{T=2} \label{fig:CCNADBk2}
    \end{subfigure}
    \begin{subfigure}[b]{.22\textwidth}
  \hbox{\includegraphics[width=\linewidth]{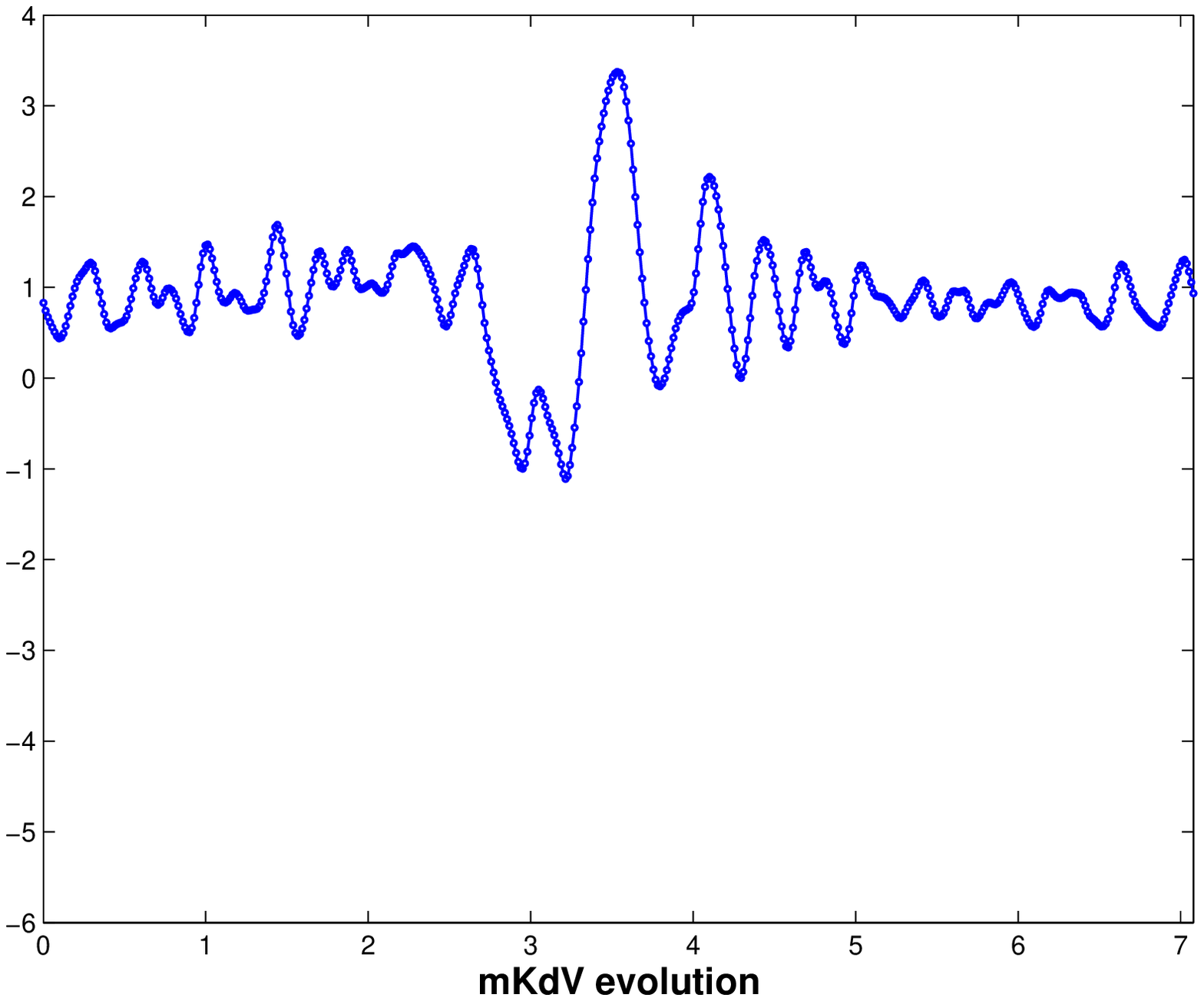}{}}\hfill
      \caption{T=3}\label{fig:CCNADBk3}
    \end{subfigure}
    \begin{subfigure}[b]{.22\textwidth}
  \hbox{\includegraphics[width=\linewidth]{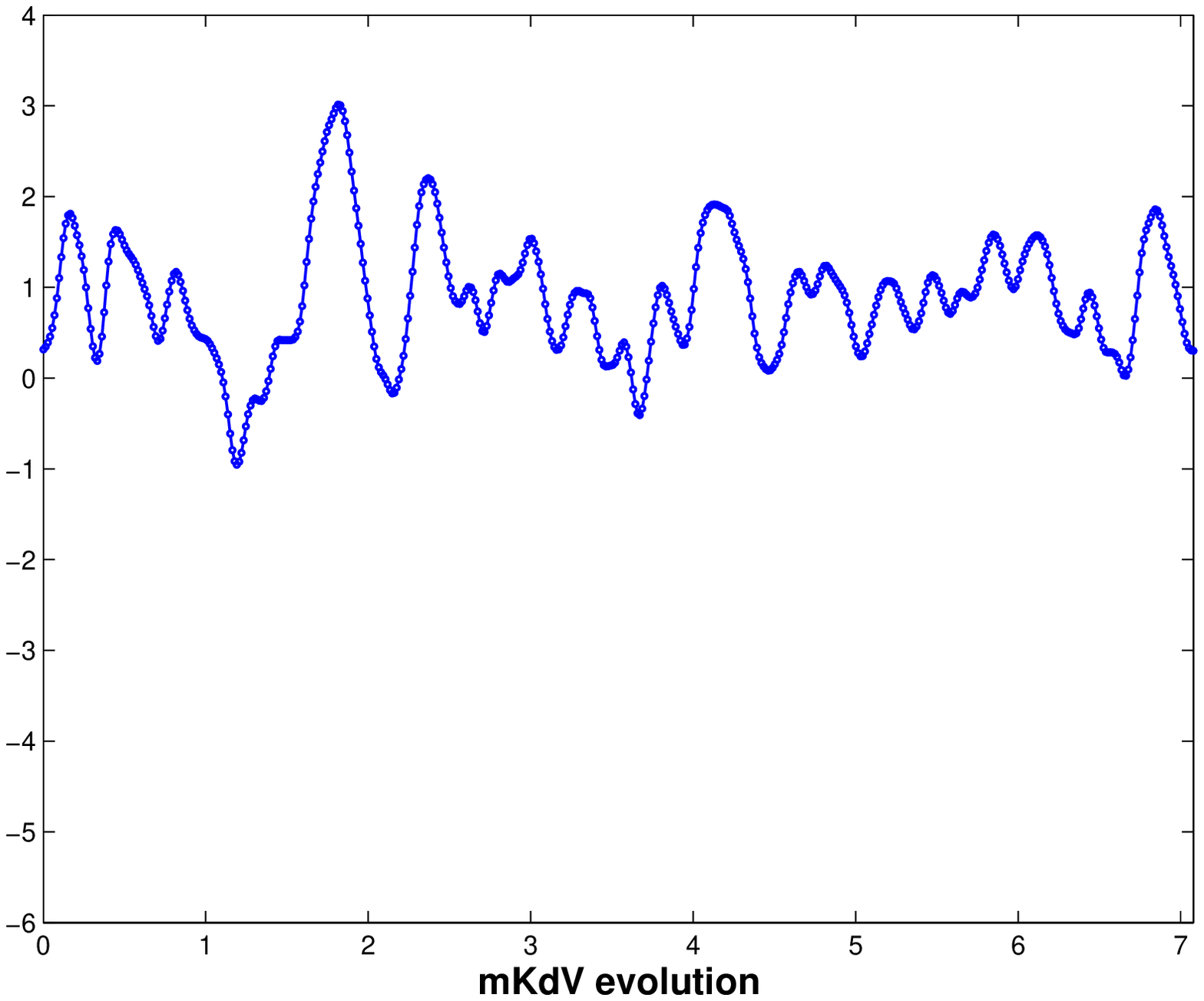}{}}\hfill
      \caption{T=4}\label{fig:CCNADBk4}
    \end{subfigure}
    \begin{subfigure}[b]{.22\textwidth}
  \hbox{\includegraphics[width=\linewidth]{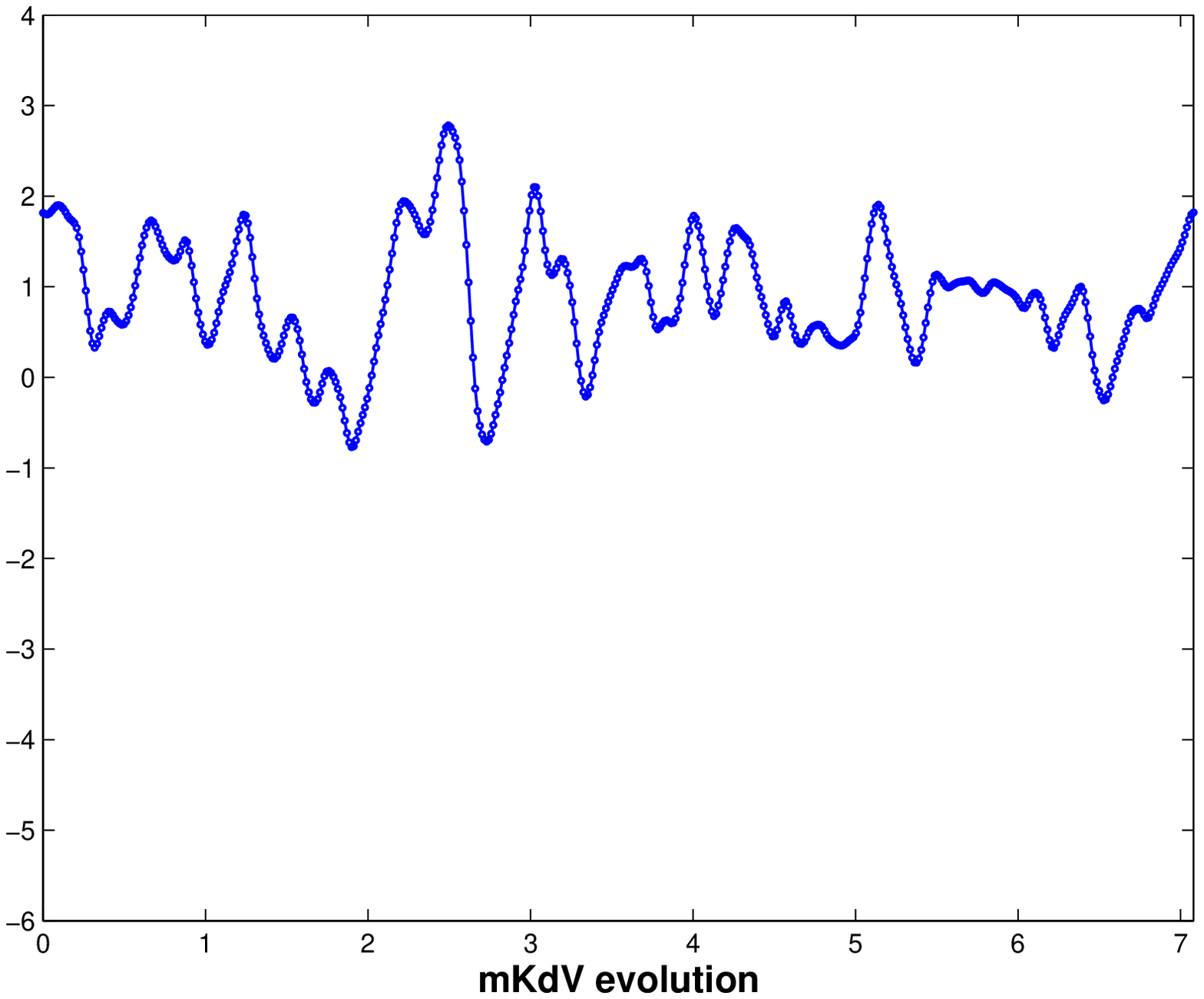}{}}\hfill
      \caption{T=5} \label{fig:CCNADBk5}
    \end{subfigure}
  \caption{The morphologies $(\ref{fig:CCNADBxy0})$-$(\ref{fig:CCNADBxy5})$ and curvatures $(\ref{fig:CCNADBk0})$-$(\ref{fig:CCNADBk5})$ for the evolution of a curve, starting from a cardioid. The results were generated using the CNADB scheme with $N=512$ and $\Delta t=1\times 10^{-5}$}
\label{fig:carevol}
\end{figure}

%****************************************************************************************************
\section{Discussion and conclusion} \label{affinefuture}

In this paper, we presented the adaptation of the methods developed by Hou-Lowengrub-Shelley to the evolution of 2-D curves that follow Airy Flow under equal arc-length parametrization, and solutions of mKdV equation through the curvature of the curve.\\

Three pseudo-spectral schemes were analyzed; the first one uses an integrating factor technique and ADB method as time discretization, the second scheme is based on CN, and the CNADB scheme that combines the first two schemes at the first step. Linear analysis, and numerical conservation of first integrals of motion for mKdV equation confirmed the accuracy in each case. A fully discrete analysis of the equations confirmed the convergence properties (second-order accurate in time and spectral accurate in space). Numerical analysis displayed a dominant accuracy of ADB scheme over CN implementation. The scheme CNADB shares the stability properties of CN with smaller errors over time. Instabilities in the ADB implementation increase as the spatial resolution increase (mainly due aliasing error). We studied two filters to overcome these instabilities by damping out the effects of high wave numbers and small amplitudes. Filtering is not needed for numerical or analytical convergence. However, its selective application over ADB improves its stability while retaining accuracy. CNADB was preferred to compute evolutions due its stability properties for longer periods of time mainly constraint by the complexity of the initial shape. \\
      
Numerical analysis displayed the appropriate convergence rates and optimal temporal stability constraint $\frac{\Delta t}{h}\leq C$ where the constant $C$ is independent of discretizations but depend on the smoothness of the solution, its derivatives (up to order 7 for ADB and order 9 for CN),  and the interval of evolution. This confirms that the developed methods are efficient, stable, and convergent to the solutions of Airy flow and mKdV equations.

%****************************************************************************************************
\begin{appendices}

\numberwithin{equation}{section}

  \renewcommand\thetable{\thesection\arabic{table}}
  \renewcommand\thefigure{\thesection\arabic{figure}}
  \section{} 
  \label{appe1}

%****************************************************************************************************
 \subsection{ Dynamics of curvature $k$, arc-length variation $s_{\alpha}$ and angle between the tangent vector and $x$-axis}
 \label{calcdynksalthet}
   By continuity of second derivatives
   \begin{equation}
   \begin{split}
     X_{\alpha t} &=X_{t \alpha}=\frac{\partial X_t}{\partial \alpha}=s_{\alpha}\frac{\partial [V\textbf{n}+T\textbf{s}]}{\partial s}=s_{\alpha}[V_s\textbf{n}+V\textbf{n}_s+T_s\textbf{s}+T\textbf{s}_s],\\
        s_{\alpha t}&=\frac{x_{\alpha}x_{\alpha t}+y_{\alpha}y_{\alpha t}}{s_{\alpha}}=\frac{X_{\alpha}\cdot X_{\alpha t}}{s_{\alpha}}.
      \end{split}
       \end{equation}
        
  Now, since $X_{\alpha}=s_{\alpha} \textbf{s}$ and using the Frenet Formulas we obtain $\textbf{n}_s=k\textbf{s}$,  $\textbf{s}_s=-k\textbf{n}$ where $k$ represents the curvature. Therefore
  $$s_{\alpha t}=s_{\alpha} \textbf{s} \cdot [V_s\textbf{n}+Vk\textbf{s}+T_s\textbf{s}+T(-k\textbf{n})]=s_{\alpha}(T_s+Vk).$$
  
  Also, we can derive the rate of change for $\theta=\arctan(\frac{y_{\alpha}}{x_{\alpha}})$ (angle between the tangent vector to the curve and the $x$-axis) as follows  
  \begin{equation}
  \begin{split}
  \theta_t &=\frac{1}{1+(\frac{y_{\alpha}}{x_{\alpha}})^2}\frac{x_{\alpha}y_{\alpha t}-y_{\alpha}x_{\alpha t}}{(x_{\alpha}^2)}=\frac{x_{\alpha}y_{\alpha t}-y_{\alpha}x_{\alpha t}}{s_{\alpha}^2}\\
  &=-\textbf{n}\cdot [V_s\textbf{n}+Vk\textbf{s}+T_s\textbf{s}+T(-k\textbf{n})]=-V_s+kT,
\end{split}  
\end{equation}
therefore

  \begin{equation}\label{classc2}
  \begin{split}
  \theta_{\alpha t}=\theta_{t \alpha}=\frac{\partial [-V_s+kT] }{\partial \alpha}=s_{\alpha}\frac{\partial [-V_s+k T]}{\partial s}=s_{\alpha}[-V_{ss}+k_sT+kT_s].
  \end{split}
    \end{equation}

  Since $k=\frac{x_{\alpha}y_{\alpha \alpha}-x_{\alpha \alpha} y_{\alpha}}{s_{\alpha}^3}$ and $k=\frac{\theta_{\alpha}}{s_{\alpha}}$, we get
 \begin{equation}\label{kevt}
 \begin{split}
  k_t&=\frac{s_{\alpha} \theta_{\alpha t}-\theta_{\alpha} s_{\alpha t}}{s_{\alpha}^2}=\frac{s_{\alpha}^2[-V_{ss}+k_sT+kT_s]-s_{\alpha}^2k[T_s+kV]}{s_{\alpha}^2}\\
 & =-V_{ss}+k_sT-k^2V=-(\frac{\partial ^2}{\partial s \partial s}+k^2)V+Tk_s.
  \end{split}
 \end{equation}

%****************************************************************************************************
 
\subsection{Linear analysis}\label{calclinanalyksflow}
Let $X(\alpha,t)$ be given in equation $(\ref{linearX})$. Then,

   \begin{equation}
  \begin{split}
  X_{\alpha}&=r_{\alpha}(cos\alpha,\sin\alpha)+r(-\sin\alpha,\cos\alpha)\\
  X_{\alpha \alpha}&=r_{\alpha \alpha}(\cos\alpha, \sin\alpha)+r(-\cos \alpha,-\sin\alpha)+r_{\alpha}(-2\sin\alpha,2\cos\alpha).
  \end{split}
    \end{equation}
    
  This implies that
\begin{equation}
  s_{\alpha}=\sqrt{(r_{\alpha}\cos \alpha+r(-\sin\alpha))^2+(r_{\alpha}\sin\alpha+r\cos\alpha)^2}=\sqrt{r^2+r_{\alpha}^2},
 \end{equation}
  and 
    \begin{equation}\label{curvf}
  k=\frac{r^2+2r_{\alpha}^2-rr_{\alpha \alpha}}{(\sqrt {r^2+r_{\alpha}^2})^3}.
  \end{equation}
  
The normal vector and differential of the arc-length are
   \begin{equation}
  \begin{split}
  \textbf{n}&=\frac{(r_{\alpha}\sin \alpha+r\cos\alpha,r\sin\alpha-r_{\alpha}\cos\alpha)}{\sqrt{r^2+r_{\alpha}^2}},\\
     s_{\alpha}&=\sqrt{R^2+2R(\delta_R\cos(m\alpha)-\delta_I\sin(m\alpha))}+O(\delta^2)\\
   &=R[1+\frac{\delta_R\cos(m\alpha)-\delta_I\sin(m\alpha)}{R}+O(\delta^2)].
  \end{split}   
   \end{equation}
   
  Therefore
  \begin{equation}
   \frac{1}{s_{\alpha}}=\frac{1}{R}[1-\frac{\delta_R\cos(m\alpha)-\delta_I\sin(m\alpha)}{R}+O(\delta^2)].
   \end{equation}

Using the expression for the curvature $(\ref{curvf})$, it follows that the normal velocity is
  \begin{equation}\label{ks}
  k_s=\frac{1}{s_{\alpha}}\frac{\partial \left[ \frac{r^2+2r_{\alpha}^2-rr_{\alpha \alpha}}{(\sqrt {r^2+r_{\alpha}^2})^3}\right]}{\partial \alpha}=\frac{1}{s_{\alpha}^4}(2rr_{\alpha}+3r_{\alpha}r_{\alpha\alpha}-rr_{\alpha\alpha\alpha})-\frac{3}{s_{\alpha}^5}s_{\alpha\alpha}(r^2+2r_{\alpha}^2-rr_{\alpha\alpha}).
   \end{equation}
   
   A simple computation shows
   \begin{equation}
  \begin{split} 
    &r_{\alpha}=m(-\delta_R\sin(m\alpha)-\delta_I\cos(m\alpha))\\
    & r_{\alpha \alpha}=m^2(-\delta_R\cos(m\alpha)+\delta_I\sin(m\alpha))\\
    & r_{\alpha\alpha\alpha}=m^3(\delta_R\sin(m\alpha)+\delta_I\cos(m\alpha))\\
    & s_{\alpha\alpha}=\frac{rr_{\alpha}+r_{\alpha}r_{\alpha \alpha}}{s_{\alpha}}. 
  \end{split}
   \end{equation}
  
    Thus, after substitution in $(\ref{ks})$ we can write
   \begin{equation}
  \begin{split}     
   &k_s=\frac{1}{s_{\alpha}^4}\left[2Rm[-\delta_R\sin(m\alpha)-\delta_I\cos(m\alpha)]-Rm^3(\delta_R\sin(m\alpha)+\delta_I\cos(m\alpha))\right]\\
   &-\frac{3}{s_{\alpha}^6}[rr_\alpha+r_{\alpha}r_{\alpha \alpha}][R^2+2R(\delta_R\cos(m\alpha)-\delta_I\sin(m\alpha))-Rm^2(-\delta_R\cos(m\alpha)+\delta_I\sin(m\alpha))]\\
   &=\frac{1}{R^3}\{[-m(\delta_R\sin(m\alpha)+\delta_I\cos(m\alpha))(2+m^2)]+3m(\delta_R\sin(m\alpha))+\delta_I\cos(m\alpha)\}+O(\delta^2)\\
   &=\frac{1}{R^3}(\delta_R\sin(m\alpha)+\delta_I\cos(m\alpha))(m-m^3)+O(\delta^2).
  \end{split}     
   \end{equation}
   
   The tangential velocity can be computed similarly:
   \begin{equation}
  \begin{split} 
  &T=\frac{\left(   \frac{r^2+2r_{\alpha}^2-rr_{\alpha \alpha}}{(\sqrt {r^2+r_{\alpha}^2})^3}    \right)^2}{2}=\frac{1}{2}\left( (\frac{1}{R^3}-3\frac{\gamma}{R^4})(R^2+2R\gamma-R(-m^2)\gamma)  \right)^2\\
&=\frac{1}{2}\left(  \frac{1}{R}-\frac{1}{R^2}3\gamma+\frac{\gamma}{R^2}(2+m^2)) \right)^2+O(\delta^2)=\frac{1}{2} \left(\frac{1}{R^2}+\gamma(m^2-1)   \right)
  \end{split}     
   \end{equation}
where $\gamma=\delta_R\cos(m\alpha)-\delta_I\sin(m\alpha)$. Therefore, the velocity is
   \begin{equation}
  \begin{split} 
   &W:=-k_s\textbf{n}+T\textbf{s}=\\
   &(\frac{1}{R^3}(\delta_R\sin(m\alpha)+\delta_I\cos(m\alpha))(m^3-m))\textbf{n}+\frac{1}{2} \left(\frac{1}{R^2}+\gamma(m^2-1)   \right) \textbf{s}+O(\delta^2),
  \end{split}     
   \end{equation}   
   whose projection is
\begin{equation} \label{velproj}  
 W \cdot (\cos \alpha,\sin\alpha)=\delta_I \left( \frac{m^3-1.5m}{R^3}  \right)\cos(m\alpha)+ \delta_R \left(\frac{m^3-1.5m}{R^3}  \right)\sin(m\alpha)+O(\delta^2).
  \end{equation}

 %****************************************************************************************************

\subsection{Direct calculation 1} \label{dircalc1} 
  We can write the equations based on CN scheme $(\ref{FirstCN})$ as follows
\begin{equation} \label{varCN1}
\widehat{\dot{\theta}_m^{j+1}}=\zeta_m^1\widehat{\dot{\theta}_m^{j-1}}+2\Delta t \zeta_m^2  \widehat{\dot{ NL}_m^j}+A_0(\Delta t^3),
\end{equation}
\begin{equation}\label{varCN2}
\widehat{\dot{\theta}_m^{j-1}}=\zeta_m^1\widehat{\dot{\theta}_m^{j-3}}+2\Delta t \zeta_m^2  \widehat{\dot{ NL}_m^{j-2}}+A_0(\Delta t^3),
\end{equation}
to obtain
  \begin{equation} 
  \begin{split}     
&\langle \widehat{\dot{\theta}^{j+1}}-\widehat{\dot{\theta}^{j-1}}, \widehat{\dot{\theta}^{j+1}}+\widehat{\dot{\theta}^{j-1}} \rangle=\\
&\langle\zeta_m^1\widehat{\dot{\theta}_m^{j-1}}+2\Delta t \zeta_m^2  \widehat{\dot{ NL}_m^j}-\left( \zeta_m^1\widehat{\dot{\theta}_m^{j-3}}+2\Delta t \zeta_m^2  \widehat{\dot{ NL}_m^{j-2}}+A_0(\Delta t^3) \right),\\
&\zeta_m^1\widehat{\dot{\theta}_m^{j-1}}+2\Delta t \zeta_m^2  \widehat{\dot{ NL}_m^j}+\zeta_m^1\widehat{\dot{\theta}_m^{j-3}}+2\Delta t \zeta_m^2  \widehat{\dot{ NL}_m^{j-2}}+A_0(\Delta t^3)\rangle\\
&=\langle \zeta_m^1\left(\widehat{\dot{\theta}_m^{j-1}} -\widehat{\dot{\theta}_m^{j-3}}\right)+2\Delta t \zeta_m^2\left( \widehat{\dot{ NL}_m^j}- \widehat{\dot{ NL}_m^{j-2}}\right)+A_0(\Delta t^3),\\
& \zeta_m^1\left(\widehat{\dot{\theta}_m^{j-1}} +\widehat{\dot{\theta}_m^{j-3}}\right)+2\Delta t \zeta_m^2\left( \widehat{\dot{ NL}_m^j}+ \widehat{\dot{ NL}_m^{j-2}}\right)+A_0(\Delta t^3)\rangle.
  \end{split}     
  \end{equation} 
   
 %****************************************************************************************************

\subsection{Nonlinear error estimates} \label{varnl}
In this proof, the hypothesis  $2\leq r$ is used, which is satisfied in both schemes ($4\leq r$ for ADB and $6\leq r$ for CN). 
\begin{proof}[Lemma \ref{nlvarlemma}]\label{proofnlvar}
 We start computing upper bounds for $|S_h\dot{f^j}|_{\infty},|\dot{f^j}|_{\infty}$.
 Notice that
\begin{equation}
h|\dot{f}^j|^2\leq \sum_{i=-N/2+1}^{N/2}|\dot{f_i^j}|^2h=||\dot{f^j}||_{l^2}^2,
\end{equation}
which implies\\
\begin{equation}\label{infnorm}
|\dot{f_m^j}|_{\infty}\leq h^{-1/2}||\dot{f^j}||_{l^2} \text{ and, }
|S_h\dot{f^j}|_{\infty},|S_h\dot{f^j}|_{\infty}\leq h^{-3/2}||\dot{f^j}||_{l^2}.\\
\end{equation}

Then, condition $2\leq r$ and the definition of $T^*$ shows that\\
\begin{equation}\label{bounDth}
|S_h\dot{\theta^j}|_{\infty},|\dot{\theta^j}|_{\infty}\leq C \text{ are bounded.}\\
\end{equation}

Now, we define 
\begin{equation} 	 
 NL(\alpha_m,t_j):=\frac{1}{2}(\frac{2\pi}{L})^3( \theta_{\alpha}(\alpha_m,t_j))^3 \text{ and }\: \widetilde{NL_m^j}:=\frac{1}{2}(\frac{2\pi}{\widetilde{L}})^3(S_h \widetilde{\theta_m^j})^3,
\end{equation} 
to write the nonlinear terms as follows:
\begin{equation}\label{nlvareq}
\dot{NL}_m^j:=\widetilde{NL_m^j}-NL(\alpha_m,t_j)=\dot{\xi}_m^jT_m^j+\dot{\xi}_m^j\dot{T}_m^j+\xi_m^j\dot{T}_m^j,
\end{equation}
where
\begin{equation} 	 
	 \xi_m^j:=\frac{2\pi}{L}S_h\theta_m^j,\: \xi(\alpha_m,t_j):=\frac{2\pi}{L} \theta_{\alpha}(\alpha_m,t_j),\: \widetilde{\xi_m^j}:=\frac{2\pi}{\widetilde{L}}S_h \widetilde{\theta_m^j},
\end{equation} 
\begin{equation} 	 
\dot{\xi}_m^j:=(\widetilde{\xi_m^j}- \xi_m^j)+(\xi_m^j-\xi(\alpha_m,t_j)),
\end{equation} 
and
\begin{equation} 	 
	 T_m^j:=\frac{1}{2}(\frac{2\pi}{L})^2(S_h\theta_m^j)^2,\: T(\alpha_m,t_j):=\frac{1}{2}(\frac{2\pi}{L})^2( \theta_{\alpha}(\alpha_m,t_j))^2,\: \widetilde{T_m^j}:=\frac{1}{2}(\frac{2\pi}{\widetilde{L}})^2(S_h \widetilde{\theta_m^j})^2.
\end{equation} 
	 
 %+++++++++++++++++++++++++++++++++++++++++++++++++
\begin{subsubsection}{Error in tangential velocity: } 	
We calculate the error for the tangential velocity\\
\begin{equation}\label{varTeq}
\dot{T}_m^j:=\widetilde{T_m^j}-T(\alpha_m,t_j)=\frac{1}{2}\left[ \widetilde{\xi_m^j}^2-(\xi_m^j)^2\right]=\frac{1}{2}\left[ 2\dot{\xi}_m^j \xi_m^j+(\dot{\xi}_m^j)^2\right].
\end{equation}

Since the truncation error $(\xi_m^j-\xi(\alpha_m,t_j))=O(h^{r+2})$, it follows that

	$$\dot{\xi}_m^j=\frac{2\pi}{\widetilde{L}}S_h\widetilde{\theta_m^j}-\frac{2\pi}{L}S_h\theta_m^j+O(h^{r+2})=2\pi S_h\dot{\theta}_m^j\dot{(L^{-1})}+\frac{2\pi}{L}S_h\dot{\theta}_m^j+2\pi S_h\theta_m^j\dot{(L^{-1})}+O(h^{r+2}),$$
	where
$$\dot{L^{-1}}=\frac{1}{\widetilde{L}}-\frac{1}{L}=\frac{-\dot{L}}{L^2}+\frac{(\dot{L})^2}{L^2[L+\dot{L}]},$$
	which implies $|\dot{L^{-1}}|\leq C|\dot{L}|$, and therefore
	\begin{equation}\label{Linveq}
	 A_0\dot{(L^{-1})}=A_0(\dot{L}).
	 \end{equation}

	Combining $(\ref{bounDth})$, $(\ref{Linveq})$ and
	 \begin{equation}\label{dicsreal}
	 S_h\theta_m^j=\theta_{\alpha}(\alpha_m,t_j)+O(h^{r+2}).
	\end{equation}
	 in the expression for $\dot{\xi}_m^j$ we see that
	\begin{equation}
	\dot{\xi}_m^j=\frac{2\pi}{L}S_h\dot{\theta}_m^j +A_0(\dot{L})+O(h^{r+2}).
	\end{equation}
	
	By hypothesis over time $(\ref{timeHyCN}),(\ref{timeHy})$ and computation $(\ref{infnorm})$ we find that
	\begin{equation}\label{a0varL}
	 |A_0(\dot{L})|_{\infty}\leq Ch^{-1/2}||\dot{L}||_{l^2}=h^{-1/2}O(h^{r+3})=O(h^{r+5/2}).
	 \end{equation}
	 
	 Hence, we rewrite
	\begin{equation}\label{xieq}
	\dot{\xi}_m^j=\frac{2\pi}{L}S_h\dot{\theta}_m^j +O(h^{r+2})=h^{-1}A_0(\dot{\theta}^j)+O(h^{r+2}).
	\end{equation}	 
	 
	 Also $||O(h^{r+2})||_{l^2}=O(h^{r+2})$ and the conditions $\frac{\Delta t}{h}\leq C$, $2\leq r$  imply that
	\begin{equation}\label{infNxieq}
	\begin{split}
	&||\dot{\xi}^j||_{l^2}=O(h^{-1}(h^r+\Delta t^2))+O(h^{r+2}) =O(h) \text{ and }\\
         &|\dot{\xi}|_{\infty}=h^{-1/2}||\dot{\xi}^j||_{l^2}\leq C\text{ are bounded.}
         \end{split}
	\end{equation}
	
         With this information $(\ref{infNxieq})$ back to $(\ref{varTeq})$ is possible to rewrite
	 $$\dot{T}_m^j=\frac{1}{2}\left[  2\left(\frac{2\pi}{L}S_h\dot{\theta}_m^j+O(h^{r+2})\right)\frac{2\pi}{L}S_h\theta_m^j+ \left(\frac{2\pi}{L}S_h\dot{\theta}_m^j+O(h^{r+2})\right)^2 \right].$$
	 
          Again using $(\ref{dicsreal})$, the fact that $\theta_{\alpha}$ is bounded and the upper bound $(\ref{infNxieq})$, the first term on the right hand side becomes
   	\begin{equation}          
        \begin{split}
          &\left(\frac{2\pi}{L}S_h\dot{\theta}_m^j+O(h^{r+2})\right)\frac{2\pi}{L}S_h\theta_m^j=\left(\frac{2\pi}{L}S_h\dot{\theta}_m^j+O(h^{r+2})\right) \left(\frac{2\pi}{L}\theta_{\alpha}(\alpha_m,t_j)+O(h^{r+2})\right)\\
          &=\left(\frac{2\pi}{L}S_h\dot{\theta}_m^j+O(h^{r+2})\right) \frac{2\pi}{L}\theta_{\alpha}(\alpha_m,t_j)+ \dot{\xi}_m^j   O(h^{r+2})=\theta_{\alpha}(\alpha_m,t_j)(\frac{2\pi}{L})^2S_h\dot{\theta}_m^j+O(h^{r+2}).
        \end{split}
	\end{equation}       
	  
          As shown previously  $S_h\dot{\theta}_m^j,\dot{\xi}_m^j$, are bounded. Thus, the second term can be computed as
   	\begin{equation}          
        \begin{split}          
          &\left(\frac{2\pi}{L}S_h\dot{\theta}_m^j+O(h^{r+2})\right)^2=\left(\frac{2\pi}{L}S_h\dot{\theta}_m^j+O(h^{r+2})\right)\dot{\xi}_m^j=\frac{2\pi}{L}S_h\dot{\theta}_m^j \dot{\xi}_m^j+O(h^{r+2})\\
          &=\frac{2\pi}{L}S_h\dot{\theta}_m^j\left(\frac{2\pi}{L}S_h\dot{\theta}_m^j+O(h^{r+2})\right)+O(h^{r+2})=\left(\frac{2\pi}{L}S_h\dot{\theta}_m^j\right)^2+O(h^{r+2})
        \end{split}
	\end{equation}     
	      
          Using $2\leq r$ we verify the inequality
	 \begin{equation}\label{Dhsquare}
	 ||(S_h\dot{\theta}_m^j)^2||_{l^2}\leq h^{-2}||\dot{\theta}_m^j||^2_{l^2}=O(h^{-2})O(h^r+\Delta t^2)||\dot{\theta}^j||_{l_2}=A_0(\dot{\theta}^j).
	 \end{equation}
	 
	 Therefore
	\begin{equation}\label{vartan}
	\dot{T}_m^j=(\frac{2\pi}{L})^2\theta_{\alpha}(\alpha_m,t_j)S_h\dot{\theta}_m^j+A_0(\dot{\theta}^j)+O(h^{r+2})=h^{-1}A_0(\dot{\theta}^j)+A_0(\dot{\theta}^j)+O(h^{r+2}).
	\end{equation}
	
	By hypothesis for time $T^*$ and $2\leq r$, we find that
	\begin{equation}\label{Tbound}
	||\dot{T^j}||_{l^2}=O(h^{-1}(h^r+\Delta t^2))+O(h^r+\Delta t^2)\leq C,\: |\dot{T^j}|_{\infty}\leq h^{-1/2}||\dot{T^j}||_{l^2}\leq C  
	\end{equation}
	\text{ are bounded quantities.} 
 \end{subsubsection}

%+++++++++++++++++++++++++++++++++++++++++++++++++

\begin{subsubsection}{Error for nonlinear term: }  Combining equation ($\ref{xieq}$),(\ref{infNxieq})($\ref{vartan}$),($\ref{Tbound}$), we approximate (\ref{nlvareq}) to obtain
   	\begin{equation}          
        \begin{split}     
&\dot{NL}_m^j=\\
&\left\{ \frac{2\pi}{L}S_h\dot{\theta}_m^j +O(h^{r+2})\right\}T_m^j+\dot{\xi}_m^j\dot{T}_m^j+\xi_m^j\left\{  \frac{(2\pi)^2}{L^2}\theta_{\alpha}(\alpha_m,t_j) S_h\dot{\theta}_m^j+A_0(\dot{\theta}^j)+O(h^{r+2})  \right\}\\
&=\frac{2\pi}{L}S_h\dot{\theta}_m^jT_m^j+\dot{\xi}_m^j\dot{T}_m^j+\xi_m^j(\frac{2\pi}{L})^2{\theta_{\alpha}(\alpha_m,t_j)}^2 S_h\dot{\theta}_m^j+A_0(\dot{\theta}^j)+O(h^{r+2}).
        \end{split}
	\end{equation}

The second term on the previous expression can be analyzed using equations (\ref{bounDth})  as follows
   	\begin{equation}          
        \begin{split}    
&\dot{\xi}_m^j\dot{T}_m^j=\left( \frac{2\pi}{L}S_h\dot{\theta}_m^j+O(h^{r+2})\right)\left( (\frac{2\pi}{L})^2\theta_{\alpha}(\alpha_m,t_j)S_h\dot{\theta}_m^j+A_0(\dot{\theta}_m^j)+O(h^{r+2}) \right)\\
&= \frac{2\pi}{L}\left(S_h\dot{\theta}_m^j\right)^2 (\frac{2\pi}{L})^2\theta_{\alpha}(\alpha_m,t_j)+A_0(\dot{\theta^j})+O(h^{r+2}),
        \end{split}
	\end{equation}
which by the estimate $(\ref{Dhsquare})$ simplifies to $\dot{\xi}_m^j\dot{T}_m^j=A_0(\dot{\theta})+O(h^{r+2})$.

 As a consequence of the truncation error for the tangent velocity  $T_m^j-T(\alpha_m,t_j)=O(h^{r+2})$ and $\xi_m^j-\xi(\alpha_mt_j)=O(h^{r+2})$, we obtain
  	\begin{equation}          
        \begin{split} 
&\dot{NL}_m^j=\frac{2\pi}{L}S_h\dot{\theta}_m^j\left(\frac{1}{2}(\frac{2\pi}{L})^2\theta_{\alpha}^2(\alpha_m,t_j)+O(h^{r+2})\right)\\
&+\left(\frac{2\pi}{L}\theta_{\alpha}(\alpha_m,t_j)+O(h^{r+2}) \right)(\frac{2\pi}{L})^2{\theta_{\alpha}(\alpha_m,t_j)}^2 S_h\dot{\theta}_m^j+A_0(\dot{\theta}^j)+O(h^{r+2}).
        \end{split}
	\end{equation}

Finally, using equation $(\ref{bounDth})$ we attain an expression for the nonlinear error
\begin{equation}\label{nlvar0}
 \dot{NL}_m^j=\frac{3}{2}(\frac{2\pi}{L})^3\theta_{\alpha}^2(\alpha_m,t_j)S_h\dot{\theta}_m^j +A_0(\dot{\theta}^j)+O(h^{r+2})=h^{-1}A_0(\dot{\theta}^j)+A_0(\dot{\theta}^j)+O(h^{r+2}),
 \end{equation}
 and the upper bounds
 
 \begin{equation}\label{upperNL}
  \begin{split}
 ||\dot{NL}_m^j||_{l_2}&\leq \frac{3}{2}\frac{2\pi}{L}|k^2|_{\infty}h^{-1}||\dot{\theta}_m^j||_{l^2}+O(h^r+\Delta t^2)+O(h^{r+3/2})=O(h^{r-1}+h^{-1}\Delta t^2),\\
   |\dot{NL}^j|_{\infty}&=h^{-1/2}||\dot{NL}^j||_{l^2}\leq C,\\
\Delta t ||\dot{NL}_m^j||_{l_2}&=C\frac{\Delta t}{h}||\dot{\theta}_m^j||_{l^2}+O(h^r+\Delta t^2)=O(h^{r}+\Delta t^2), 
\end{split}
  \end{equation}
  
  for $j=1,...,n$, provided that $\frac{\Delta t}{h}$ is bounded. 
 \end{subsubsection}
  \end{proof}

  %****************************************************************************************************
 \begin{subsection}{Proof of convergence for Adams Bashforth (ADB) discretization.}
 \begin{proof}[Proof of Theorem \ref{disctheoADB}]
\label{discreteproofADB}  
 The error between the numerical and the exact solution (at a given time $t_j$) is given by
\begin{equation}
\dot{\theta}_m^j:=\widetilde{\theta_m^j}-\theta(\alpha_m,t_j).\label{ThetaerrorVar}
\end{equation}

Defining the auxiliary time, 
\begin{equation}\label{timeHy}
T^*=Sup\{t|t\leq T,|\dot{L}|<h^{r+3},||\dot{\theta}^j||_{l^2}=O(h^r+\Delta t^2)\},\\
\end{equation}
   for $j=0,1,...,n$  (we have an overall accuracy of $h^2$) we will show that the error at the step $n+1$ also satisfies the estimate $ ||\dot{\theta}^{n+1}||_{l^2}=O(h^r+\Delta t^2)$. Hence $T^*=T$ by induction.

  %****************
 \paragraph*{Taylor approximations: \:} for the first step of the induction argument, we calculate upper bounds for the first step, based on a combination of Euler and integrating factor method (IFM) using the Taylor expansion:
 \begin{equation}\frac{\partial \Psi}{\partial t}=(rNL),\:\: r_m^t(t)=e^{i(2\pi m)^3tL^{-3}},\:\: \Psi(m,j)=r_m^t\widehat{\theta_m^j}.\end{equation}
  
 Expanding $\Psi$ around time $t_0$ and defining $\zeta_m=e^{-i(2\pi m)^3L^{-3}\Delta t}$, we obtain
 \begin{equation}\label{EStep}
 \widehat{ \theta_m^1}=\zeta_m ( \widehat{ \theta_m^0}+\Delta t \widehat{NL_m^0} )+\Delta t^2\frac{\zeta_m}{2}(\Psi_{tt})_m^0+O(\Delta t^3).
 \end{equation}
 
 The numerical solution satisfies  at the first step (Euler discretization)
  \begin{equation} \widehat{\widetilde{\theta_m^1}}=\zeta_m (\widehat{ \widetilde{ \theta_m^0}}+\Delta t \widehat{\widetilde{NL_m^0}} ) \end{equation}
 thus, we can write an expression for the error at the first step
  \begin{equation}\widehat{\dot{\theta}_m^{1}}=\zeta_m \widehat{\dot{\theta}_m^{0}} +\Delta t \zeta_m \widehat{\dot{NL}_m^{0}}+\Delta t^2\frac{\zeta_m}{2}(\Psi_{t t})_m^0+O(\Delta t^3).\end{equation}

Observe that $||\frac{\zeta}{2}\widehat{(\Psi_{t t})^0}||=\frac{1}{2}||\widehat{(\Psi_{t t})^0}||=\frac{1}{2\sqrt{2\pi}}||(\Psi_{t t})^0||_{l^2}$. We will see that the coefficients of $\Delta t^2$ term are bounded (independent of discretization) in $l^2$ norm.

Since
\begin{equation}{\Psi_t}_m=e^{i t(\frac{2\pi m}{L})^3}\widehat{NL_m},\end{equation}
then
\begin{equation}\label{psider2a}
{\Psi_{t t}}_{m}=e^{i t(\frac{2\pi m}{L})^3}\left((\widehat{NL^0_m})_t+\widehat{NL^0_m}i(\frac{2\pi m}{L})^3\right).
\end{equation}

Because $\theta$ is 2 times differentiable with respect to time (so we can commute derivatives), we can write $NL_t=-\frac{1}{2s_{\alpha}^3}\frac{3}{2}\theta_{\alpha}^2\theta_{\alpha t}=-\frac{1}{2s_{\alpha}^3}\frac{3}{2}\theta_{\alpha}^2\frac{1}{s_{\alpha}^3}[\theta_{\alpha\alpha \alpha \alpha}+(\frac{\theta_{\alpha}^3}{2})_{\alpha}]$ which involves spatial derivatives of order $4$ for theta. Hence by the assumption $4\leq r$, these derivatives are $L^2$ integrable. Also $(\widehat{NL_m})_{t}=(\widehat{{{NL_{t}}_m}})$ shows that $(\widehat{NL^0})_{t}$ is $l^2$ integrable.

To control the second term of $(\ref{psider2a})$ observe that 
\begin{equation}\widehat{(NL_{sss})^0_m}=-\widehat{NL^0_m}i(\frac{2\pi m}{L})^3,\end{equation}
and $NL_{sss}=\frac{1}{2s_{\alpha}^6}(\theta_{\alpha}^3)_{\alpha \alpha \alpha}$ involves $L^2$ integrable derivatives of order $4$ for theta. This shows that $\widehat{NL^0}i(\frac{2\pi m}{L})^3$ is bounded in the $l^2$ norm and consequently $||(\Psi_{t t})_m^0||_{l^2}$ is also bounded. In other words
 \begin{equation}\label{sharpEStep}
 \widehat{ \theta_m^1}=\zeta_m ( \widehat{ \theta_m^0}+\Delta t \widehat{NL_m^0} )+A_0(\Delta t^2),
 \end{equation}
 and
  \begin{equation}\label{vt2}
  ||\widehat{\dot{\theta}^1}||^2=\langle \zeta_m \widehat{\dot{\theta}_m^{0}} +\Delta t \zeta_m \widehat{\dot{NL}_m^{0}}+A_0(\Delta t^2), \zeta_m \widehat{\dot{\theta}_m^{0}} +\Delta t \zeta_m \widehat{\dot{NL}_m^{0}}+A_0(\Delta t^2)\rangle.
  \end{equation}
  %****************
 
 Now, we analyze the error after the second step $(1\leq  j)$. Using Taylor's approximation we obtain
 \begin{equation}\label{realt}
  \widehat{\theta_m^{j+1}}=\widehat{\theta_m^j}\zeta_m+\frac{\Delta t}{2}(3\zeta_m \widehat{NL_m^j}-(\zeta_m)^2\widehat{NL_m^{j-1}})+\frac{5\Delta t^3}{12}(\Psi^{(3)})_m^{j}+O(\Delta t^4).
\end{equation}

On the other hand, the numerical solution $(\ref{discth})$ satisfies:\\
\begin{equation}\label{numt}
\widehat{\widetilde{\theta_m^{j+1}}}=\widehat{\widetilde{\theta_m^j}} \zeta_m+\frac{\Delta t}{2}(3 \zeta_m\widehat{\widetilde{NL_m^j}}- (\zeta_m)^2\widehat{\widetilde{NL_m^{j-1}}}).
\end{equation}

Subtracting $(\ref{realt})$ from $(\ref{numt})$  we obtain the following equation for the error in $\theta$:
\begin{equation}\label{discvartheta0}
\widehat{\dot{\theta}_m^{j+1}}=\zeta_m \widehat{\dot{\theta}_m^j}+\frac{\Delta t}{2}\widehat{\dot{ \mu}_m^j}+\frac{5\Delta t^3}{12}(\Psi^{(3)})_m^{j}+O(\Delta t^4),
\end{equation}
where 
\begin{equation}\label{nonleq}
\widehat{\dot{ \mu}_m^j}=3 \zeta_m\widehat{\dot{NL}_m^j}- (\zeta_m)^2\widehat{\dot{NL}_m^{n-1}}.
\end{equation}

Similarly way to the first step and for future estimates we show that the coefficient for the $\Delta t^3$ term in $(\ref{discvartheta0})$ is integrable in the $l^2$ norm.

From $(\ref{psider2a})$ we obtain

\begin{equation}\label{nltt}
\begin{split}
&{\Psi_{t t t}}_m=\\
&e^{i t(\frac{2\pi m}{L})^3}\left(  (\widehat{NL_m})_{t t}+(\widehat{NL_m})_{t}i(\frac{2\pi m}{L})^3 + i(\frac{2\pi m}{L})^3\left(  (\widehat{NL_m})_{t}+\widehat{NL_m}i(\frac{2\pi m}{L})^3\right)\right)\\
&=e^{i t(\frac{2\pi m}{L})^3}\left(  (\widehat{NL_m})_{t t}+2\underbrace{(\widehat{NL_m})_{t}i(\frac{2\pi m}{L})^3}_{T1} +\underbrace{( i(\frac{2\pi m}{L})^3)^2\widehat{NL_m}}_{T2}\right).
\end{split}
\end{equation}

Now 
\begin{equation}\label{NLterms}\begin{split}
&NL_{t t}=((\frac{\theta_s^3}{2})_t)_t=(\frac{1}{s_{\alpha}^3}\frac{3}{2}\theta_{\alpha}^2\theta_{\alpha t})_t=\frac{3}{2s_{\alpha}^3}\left(\theta_{\alpha}^2(\theta_{sss}+\frac{\theta_s^3}{2})_{\alpha}\right)_t\\
&=\frac{3}{2s_{\alpha}^3}\left(2\theta_{\alpha}\theta_{\alpha t}\theta_{t \alpha}+ \theta_{\alpha}^2(\theta_{sss}+(\frac{\theta_s^3}{2}))_{\alpha t}\right)\\
&=\frac{3}{2s_{\alpha}^3}\left(2\theta_{\alpha}\theta_{\alpha t}\theta_{t \alpha}+ \theta_{\alpha}^2(\theta_{sss}+(\frac{\theta_s^3}{2}))_{\alpha t}\right),
\end{split}
\end{equation}
 involves spatial derivatives of order $7$ for $\theta$, since $4\leq r$ (by hypothesis) we know these are $l^2$ integrable.   In addition temporal derivatives, and Fourier transform commute, we conclude that each term in $(\ref{NLterms})$ is $l^2$ integrable. Moreover,  $(NL_t)_{sss},\:(NL)_{ssssss}$ are also $l^2$ integrable provided $\theta$ is at least $7$ times differentiable. Consequently, terms $T1$ and $T2$ of $(\ref{nltt})$ are $l^2$  integrable too.

  This implies that $\Psi_{ttt}$ is also $l^2$ integrable and  we rewrite $(\ref{discvartheta0})$ as
 
\begin{equation}\label{discvartheta}
\widehat{\dot{\theta}_m^{j+1}}=\zeta_m \widehat{\dot{\theta}_m^j}+\frac{\Delta t}{2}\widehat{\dot{ \mu}_m^j}+A_0(\Delta t^3).
\end{equation}
 
  %****************
To estimate the error consider the inner product
\begin{equation}\label{LHSADB}
\langle \widehat{\dot{\theta}^{j+1}}-\zeta \widehat{\dot{\theta}^{j-1}},  \widehat{\dot{\theta}^{j+1}}+\zeta \widehat{\dot{\theta}^{j-1}}\rangle=||\widehat{\dot{\theta}^{j+1}}||^2-|| \widehat{\dot{\theta}^{j-1}}||^2+2iIm(\langle \widehat{\dot{\theta}^{j+1}},\zeta  \widehat{\dot{\theta}^{j-1}} \rangle),
\end{equation}
 where we have used that $|\zeta_m|=1$ for each $m$ and $\zeta=(\zeta_{-N/2+1},..,\zeta_{N/2})$.

Using $(\ref{discvartheta})$ and
\begin{equation}\label{discvartheta2}
\widehat{\dot{\theta}_m^{j}}=\zeta_m \widehat{\dot{\theta}_m^{j-1}}+\frac{\Delta t}{2}\widehat{\dot{ \mu}_m^{j-1}}+A_0(\Delta t^3),
\end{equation}
into the main equation $(\ref{LHSADB})$ we obtain the right hand side 
\begin{equation}\label{RHS}
\begin{split}
&\langle \widehat{\dot{\theta}^{j+1}}-\zeta \widehat{\dot{\theta}^{j-1}},  \widehat{\dot{\theta}^{j+1}}+\zeta \widehat{\dot{\theta}^{j-1}}\rangle\\
&=\langle (\zeta-1) \widehat{\dot{\theta}^{j}}+ \frac{\Delta t}{2} (\widehat{\dot{\mu}^j}+\widehat{\dot{\mu}^{j-1}})+A_0(\Delta t^3), (\zeta+1) \widehat{\dot{\theta}^{j}}+ \frac{\Delta t}{2}( \widehat{\dot{\mu}^j}-\widehat{\dot{\mu}^{j-1}})+A_0(\Delta t^3) \rangle.
\end{split}
\end{equation}

By definition $(\ref{nonleq})$ of $\dot{\mu}_m^j$, using the estimate for the nonlinear error $\Delta t  ||\dot{NL}^j||_{l^2}=O(h^r+\Delta t^2)$ $(\ref{DtupperNL})$ and Plancherel theorem we have that
  \begin{equation}\label{eqDtmu}
  ||\Delta t\dot{ \mu}^j||_{l^2}=\Delta t|| 3 \zeta\widehat{\dot{NL}^j}- (\zeta)^2\widehat{\dot{NL}^{n-1}}||\leq C\Delta t(||\dot{NL}^j||_{l^2}+||\dot{NL}^{j-1}||_{l^2})=O(h^r+\Delta t^2),
  \end{equation}
for $j=1,...,n$.

Then we rewrite $(\ref{RHS})$ as follows
\begin{equation}
\begin{split}
&\underbrace{\langle (\zeta-1) \widehat{\dot{\theta}^{j}}, (\zeta+1) \widehat{\dot{\theta}^{j}}\rangle}_{J_1^j} +\\
&\underbrace{\langle (\zeta-1) \widehat{\dot{\theta}^{j}}, \frac{\Delta t}{2}( \widehat{\dot{\mu}^j}-\widehat{\dot{\mu}^{j-1}})\rangle+ \langle \frac{\Delta t}{2}( \widehat{\dot{\mu}^j}+\widehat{\dot{\mu}^{j-1}}),(\zeta+1)\widehat{\dot{\theta}^j} \rangle}_{J_2^j}\\
&+\underbrace{(\frac{ \Delta t}{2})^2\langle \widehat{\dot{\mu}^j}+\widehat{\dot{\mu}^{j-1}}, \widehat{\dot{\mu}^j}-\widehat{\dot{\mu}^{j-1}}) \rangle}_{J_3^j}+\\
& \underbrace{\langle A_0(\Delta t^3), (\zeta+1) \widehat{\dot{\theta}^{j}}+ \frac{\Delta t}{2}( \widehat{\dot{\mu}^j}-\widehat{\dot{\mu}^{j-1}})+A_0(\Delta t^3) \rangle}_{J_4^j}\\
&+\underbrace{\langle (\zeta-1) \widehat{\dot{\theta}^{j}}+ \frac{\Delta t}{2} (\widehat{\dot{\mu}^j}+\widehat{\dot{\mu}^{j-1}})+A_0(\Delta t^3),A_0(\Delta t^3) \rangle}_{J_5^j}.
\end{split}
\end{equation}

Adding those terms $(\ref{LHSADB})$ over time, we obtain a telescopic sum
\begin{equation}\label{upper0ADB}
\sum_{j=2}^{n}\langle \widehat{\dot{\theta}^{j+1}}-\zeta \widehat{\dot{\theta}^{j-1}},  \widehat{\dot{\theta}^{j+1}}+\zeta \widehat{\dot{\theta}^{j-1}}\rangle=||\widehat{\dot{\theta}^{n+1}}||^2+|| \widehat{\dot{\theta}^{n}}||^2-\left( ||\widehat{\dot{\theta}^{2}}||^2+|| \widehat{\dot{\theta}^{1}}||^2\right)+I_1,
\end{equation}
where $I_1$ is a purely imaginary term.

Now we analyze the sum over time of the right-hand side terms
\paragraph*{$J_1$ contribution: \:} a direct calculation shows that
\begin{equation}J_1^j=||\widehat{\dot{\theta}^j}||^2-||\widehat{\dot{\theta}^j}||^2+2iIm(\langle \zeta \widehat{\dot{\theta}^j},\widehat{\dot{\theta}^j} \rangle).\end{equation}

Thus, the sum over time is telescopic
\begin{equation}\label{upper1ADB}
\sum_{j=2}^{n}J_1^j=|| \widehat{\dot{\theta}^{n-1}}||^2+|| \widehat{\dot{\theta}^{n-2}}||^2-|| \widehat{\dot{\theta}^{1}}||^2-|| \widehat{\dot{\theta}^{0}}||^2+I_2,
\end{equation}
where $I_2$ is a purely imaginary term.

\paragraph*{$J_2$ contribution: \:} similarly
$$J_2^j=2Re\underbrace{\left(\langle \zeta \widehat{\dot{\theta}^j},\frac{\Delta t}{2}\widehat{\dot{\mu}^j}\rangle+\langle \widehat{\dot{\theta}^j},\frac{\Delta t}{2}\widehat{\dot{\mu}^{j-1}} \rangle\right)}_{J_*}+2iIm\left(\langle \frac{\Delta t}{2} \widehat{\dot{\mu}^{j-1}},\zeta \widehat{\dot{\theta}^j} \rangle+\langle\frac{\Delta t}{2}\widehat{\dot{\mu}^j},\widehat{\dot{\theta}^j} \rangle\right),$$
where
\begin{equation}
\begin{split}
J_*&=2Re\left(\langle \zeta \widehat{\dot{\theta}^j},\widehat{\dot{\theta}^{j+1}}-\zeta \widehat{\dot{\theta}^j}+A_0(\Delta t^3) \rangle+\langle \widehat{\dot{\theta}^j},\widehat{\dot{\theta}^j}-\zeta\widehat{\dot{\theta}^{j-1}}+A_0(\Delta t^3)\rangle \right)\\
&=2Re\left(\langle \zeta \widehat{\dot{\theta}^j},\widehat{\dot{\theta}^{j+1}}\rangle-||\widehat{\dot{\theta}^j}||^2+||\widehat{\dot{\theta}^j} ||^2-\langle \widehat{\dot{\theta}^j},\zeta\widehat{\dot{\theta}^{j-1}}\rangle \right)+A_0(\Delta t^3)A_0(\dot{\theta^j})\\
&=2Re\left(\langle \widehat{\dot{\theta}^{j+1}},\zeta \widehat{\dot{\theta}^j} \rangle-\langle \widehat{\dot{\theta}^j},\zeta\widehat{\dot{\theta}^{j-1}}\rangle \right)+\Delta tA_0(\Delta t^2)A_0(\dot{\theta^j}).
\end{split}
\end{equation}

The sum over time is
\begin{equation}\label{upADB1}
\begin{split}
\sum_{j=2}^{n}J_2^j&=2Re\left(\langle \widehat{\dot{\theta}^{n+1}},\zeta \widehat{\dot{\theta}^n} \rangle-\langle \widehat{\dot{\theta}^2},\zeta\widehat{\dot{\theta}^{1}}\rangle \right)+A_0(\Delta t^2)A_0(\dot{\theta^j})+I_3\\
&=2Re\left(\langle \zeta_m \widehat{\dot{\theta}_m^{n}}+\frac{\Delta t}{2}\widehat{\dot{ \mu}_m^{n}}+A_0(\Delta t^3) ,\zeta \widehat{\dot{\theta}^n} \rangle-\langle \widehat{\dot{\theta}^2},\zeta\widehat{\dot{\theta}^{1}}\rangle \right)+A_0(\Delta t^2)A_0(\dot{\theta^j})+I_3
\end{split}
\end{equation}where $I_3$ is a purely imaginary term.

For the first step, $\dot{\theta}_m^0$ is zero. In addition $(\ref{vt2}),(\ref{DtupperNL})$ and Plancherel theorem shows that
 \begin{equation}\label{vt2a}
  ||\dot{\theta}^1||_{l^2}^2= ||\widehat{\dot{\theta}^1}||^2=||A_0(\Delta t^2)||^2=O(\Delta t^4).
  \end{equation}
  
 For the second step, consider $(\ref{discvartheta})$ and approximation $(\ref{eqDtmu})$ to get 
  $$||\dot{\theta}^2||_{l^2}^2\leq O((h^r+\Delta t^2)^2),$$
 which by induction implies that $||\dot{\theta}^n||=O(h^r+\Delta t^2)$.
 
Considering only real terms in $(\ref{upADB1})$ and approximation $(\ref{eqDtmu})$ for the nonlinear error we obtain
\begin{equation}\label{upper2ADB}
\begin{split}
|Re(\sum_{j=2}^{n}J_2^j)|&\leq  2|| \zeta_m \widehat{\dot{\theta}_m^{n}}+\frac{\Delta t}{2}\widehat{\dot{ \mu}_m^{n}}+A_0(\Delta t^3)|| \cdot||\zeta \widehat{\dot{\theta}^n}|| +C(h^{r}\Delta t^2+\Delta t^4)\\
&\leq O(h^{r}+\Delta t^2)O(h^r+\Delta t^2)+O((h^{r}+\Delta t^2)^2)=O((h^{r}+\Delta t^2)^2).
\end{split}
\end{equation}

\paragraph*{$J_3$ contribution:\: } a direct calculation shows
\begin{equation}
J_3^j=(\frac{\Delta t}{2})^2(||\widehat{\dot{\mu}^j}||^2-||\widehat{ \dot{\mu}^{j-1}}||^2)+(\frac{\Delta t}{2})^22iIm(\langle\widehat{\dot{ \mu}^{j-1}},\widehat{\dot{\mu}^j}\rangle).
\end{equation}

 Then, the sum over time is also telescopic
 \begin{equation}\label{upper3ADB}
 \sum_{j=2}^nJ_3^j=(\frac{\Delta t}{2})^2(||\widehat{\dot{\mu}^j}||^2-|| \widehat{\dot{\mu}^{j-1}}||^2)+I_4,
 \end{equation}
 where $I_4$ is a purely imaginary term.

\paragraph*{$J_4,J_5$ contribution: \:} by induction and approximation $(\ref{eqDtmu})$ we find that $(\zeta+1) \widehat{\dot{\theta}^{j}}+ \frac{\Delta t}{2}( \widehat{\dot{\mu}^j}-\widehat{\dot{\mu}^{j-1}})+A_0(\Delta t^3)=A_0(\dot{\theta}^j+\Delta t^3)$, then using  Cauchy-Schwarz, triangle inequalities and Plancherel theorem we get
\begin{equation}|J_4|\leq \Delta t^3||A_0(\dot{\theta}^j+\Delta t^3)||_{l^2}\leq \Delta t^3O(h^r+\Delta t^3).\end{equation}

 Similarly, 
\begin{equation}|J_5|=\Delta t^3O(h^r+\Delta t^3).\end{equation}

Thus, the sum over time is

\begin{equation}\label{upper4ADB}
|\sum_{j=2}^nJ_4^j+J_5^j|=O(h^r\Delta t^2+\Delta t^4).
\end{equation}

With this $(\ref{upper1ADB})$,$(\ref{upper2ADB})$,$(\ref{upper3ADB})$,$(\ref{upper4ADB})$ information and considering the real part of $(\ref{upper0ADB})$ we write
\begin{equation}\label{RHS2}
\begin{split}
||\widehat{\dot{\theta}^{n+1}}||^2+|| \widehat{\dot{\theta}^{n}}||^2-\left( ||\widehat{\dot{\theta}^{2}}||^2+|| \widehat{\dot{\theta}^{1}}||^2\right)& \leq\\
& || \widehat{\dot{\theta}^{n-1}}||^2+|| \widehat{\dot{\theta}^{n-2}}||^2-|| \widehat{\dot{\theta}^{1}}||^2-|| \widehat{\dot{\theta}^{0}}||^2\\
&+O((h^{r}+\Delta t^2)^2)+O(h^r\Delta t^2+\Delta t^4).
\end{split}
\end{equation}

Therefore,
$$||\dot{\theta}^{n+1}||_{l^2}^2=O((h^{r}+\Delta t^2)^2)\Rightarrow ||\dot{\theta}^{n+1}||_{l^2}\leq C(h^r+\Delta t^2).$$
As a consequence, the upper bound holds for a longer time ($j=n+1$) than $T^*$ $(\ref{ThetaConvergence})$, and $T^*=T$ as desired.{\hfill\ensuremath{\square}}

\end{proof}

\end{subsection}

 %****************************************************************************************************

\subsection{Here we show the conservation of the quantities $(\ref{firstint})$ under Airy Flow over time.} \label{conservq} 

\paragraph{M1:}  $k=\theta_s/s_{\alpha}$ is a perfect derivative of a periodic function.  The result follows by the Fundamental Theorem of Calculus.\\
 \paragraph{M2:} Observe that
 \begin{equation}I:=\frac{\partial M2}{\partial t}=\int 2kk_t ds. \end{equation}
 
   We know that for airy flow
 \begin{equation}
 k_t=k_{sss}+3\frac{k^2k_s}{2}=(k_{ss}+\frac{k^3}{2})_s,
 \end{equation}
 then
 \begin{equation} I=\int 2kk_{sss}+3k^3k_s ds=2\int k k_{sss}ds+\int \frac{\partial (\frac{3}{4}k^4)}{\partial s}ds=2\int k k_{sss} ds.\end{equation}

Again, since $(kk_{ss})_s=kk_{sss}+k_{ss}k_s$ we have 
 \begin{equation} \int k k_{sss} ds=-\int k_sk_{ss} ds=-\frac{1}{2}\int \frac{\partial k_s^2}{\partial s}ds=0\Rightarrow I=0, \end{equation} 
and $M2$ is conserved over time.\\

\paragraph{M3:} Similarly, using integration by parts and periodicity of the functions we obtain
 \begin{equation}
 \begin{split}
 &J:=\frac{\partial M3}{\partial t}=\int (k_s k_{st}-\frac{1}{2}k^3k_t )ds =\int [-k_{ss}-\frac{1}{2}k^3]k_tds\\
&=-\int (k_{ss}+\frac{1}{2}k^3)(k_{ss}+\frac{k^3}{2})_sds=\frac{-1}{2}\int\frac{\partial (k_{ss}+\frac{k^3}{2})^2}{\partial s} ds=0.
\end{split}
\end{equation}

\end{appendices}
%****************************************************************************************************
\section{Acknowledgements.} 

 Mariano Franco-de-Leon acknowledges the hospitality of the University of California, Irvine where preliminary work was
performed. Gratefully acknowledges economic support from the National Council of Science and Technology in Mexico (CONACyT), the University of California Institute for Mexico and the United States (UC Mexus), partial support from the Ministry of Public Education in Mexico, SEP (Secretaria de Educaci\'on P\'ublica),  and the Miguel Velez Fellowship.

%****************************************************************************************************

%\begin{acknowledgements}
%If you'd like to thank anyone, place your comments here
%and remove the percent signs.
%\end{acknowledgements}

% BibTeX users please use one of
%\bibliographystyle{spbasic}      % basic style, author-year citations
%\bibliographystyle{spphys}       % APS-like style for physics
%\bibliographystyle{spmpsci}      % mathematics and physical sciences
%\bibliographystyle{rusnat}
%\bibliographystyle{dinat}
%\bibliographystyle{plainnat}
%\bibliographystyle{humannat}
%\bibliographystyle{abbrvnat}
%\setcitestyle{authoryear,open={((},close={))}}
%

%\bibliography{}   % name your BibTeX data base

%\bibliographystyle{spmpsci}
\bibliography{Airy_mKdV}
% Non-BibTeX users please use
%\begin{thebibliography}{}
%%
%% and use \bibitem to create references. Consult the Instructions
%% for authors for reference list style.
%%
%\bibitem{RefJ}
%% Format for Journal Reference
%Author, Article title, Journal, Volume, page numbers (year)
%% Format for books
%\bibitem{RefB}
%Author, Book title, page numbers. Publisher, place (year)
%% etc
%\end{thebibliography}

\end{document}